\tikzset{negated/.style={
		decoration={markings,
			mark= at position 0.5 with {
				\node[transform shape] (tempnode) {$\times$};
			}
		},
		postaction={decorate}
	}
}
\newtheorem{theorem}{Theorem}
\newtheorem{corollary}[theorem]{Corollary}
\newtheorem{lemma}[theorem]{Lemma}
\newtheorem{proposition}[theorem]{Proposition}
\newtheorem{remark}[theorem]{Remark}
\newtheorem{example}[theorem]{Example}
\newcommand{\Irr}{\textnormal{Irr}}
\newcommand{\FIrr}{\textnormal{FIrr}}
\newcommand{\cd}{\textnormal{cd}}
\newcommand{\nl}{\textnormal{nl}}
\newcommand{\lin}{\textnormal{lin}}
\newcommand{\gal}{\textnormal{Gal}}
\title[]{A Combinatorial Formula for the Wedderburn Decomposition of Rational Group Algebras and the Rational Representations of Ordinary Metacyclic $p$-groups}
\author{Ram Karan Choudhary}
\address{Indian Institute of Technology, Bhubaneswar, Arugul Campus, Jatni, Khurda-752050, India.}
\email{ramkchoudhary1997@gmail.com}
\author{Sunil Kumar Prajapati$^*$}
\address{Indian Institute of Technology, Bhubaneswar, Arugul Campus, Jatni, Khurda-752050, India.}
\email{skprajapati@iitbbs.ac.in}
\thanks{$^{\textbf{*}}$ Corresponding author.
}
\subjclass[2020]{primary 20C05; secondary 20C15, 20D15}
\keywords{Rational group algebras, Wedderburn decomposition, rational representations, metacyclic $p$-groups}
\begin{document}
	\maketitle
	
	\begin{abstract}
		In this article, we present a combinatorial formula for computing the Wedderburn decomposition of the rational group algebra associated with an ordinary metacyclic $p$-group $G$, where $p$ is any prime. We also provide a formula for counting irreducible rational representations of $G$ with distinct degrees and derive a method to explicitly obtain all inequivalent irreducible rational matrix representations of $G$.
	\end{abstract}
	
	\section{Introduction} 
	Finite metacyclic groups were classified by Hempel \cite{Hempel}, while earlier, the classification of finite metacyclic $p$-groups was addressed by several authors in \cite{King,  Liedahl, Liedahl 2, Sim}. In this article, we adopt the classification of metacyclic $p$-groups as given by King \cite{King}. Let $p$ be a prime number, and let $G$ denote a finite, non-cyclic, metacyclic $p$-group. King \cite[Theorem 3.2]{King} demonstrated that each metacyclic $p$-group admits a uniquely reduced presentation, up to isomorphism. Consequently, $G$ can be described by a uniquely reduced presentation, which falls into one of the following two categories:
	
	\begin{enumerate}
		\item[(A)] Ordinary metacyclic groups:
		\begin{equation}\label{prest:metacyclic}
			G = \langle a, b ~ : ~ a^{p^n} = 1, b^{p^m} = a^{p^{n-r}}, bab^{-1} = a^{1+p^{n-s}} \rangle
		\end{equation}
		for certain integers $m$, $n$, $r$ and $s$, where either $p \geq 3$, or $p = 2$ and $s < n - 1$ when $n \geq 2$. The presentation in \eqref{prest:metacyclic} is uniquely reduced if and only if the following conditions are satisfied:
		\begin{enumerate}
			\item Split case: $0 = r \leq s < \min\{m+1, n\}$.
			\item Non-split case: $\max\{1, n-m+1\} \leq r < \min\{s, n-s+1\}$.
		\end{enumerate}
		
		\item[(B)] Exceptional metacyclic groups:
		\begin{equation}\label{prest:exmetacyclic}
			G = \langle a, b ~ : ~ a^{2^n} = 1, b^{2^m} = a^{2^{n-r}}, bab^{-1} = a^{-1+2^{n-s}} \rangle
		\end{equation}
		for certain integers $m$, $n$, $r$ and $s$. The presentation in \eqref{prest:exmetacyclic} is uniquely reduced if and only if the following conditions are satisfied:
		\begin{enumerate}
			\item Split case: $0 = r \leq s < \min\{m+1, n-1\}$.
			\item Non-split case: Either $r = 1$ with $\max\{1, n-m+1\} \leq s < \min\{m, n-1\}$, or $r = 1$, $s = 0$ and $m = 1 < n$ (the later case gives the presentation of a generalized quaternion group).
		\end{enumerate}
	\end{enumerate}
	It is worth noting that ordinary metacyclic $p$-groups encompass all metacyclic $p$-groups except for a class of metacyclic $2$-groups, known as exceptional metacyclic groups. In this work, we focus on the Wedderburn decomposition of the rational group algebra associated with an ordinary metacyclic $p$-group.

	The Artin-Wedderburn theorem ensures that a ring is semisimple if and only if it is isomorphic to a direct sum of matrix rings over division rings. Furthermore, by the Brauer-Witt Theorem (see \cite{Yam}), the Wedderburn components of a rational group algebra are Brauer equivalent to cyclotomic algebras. The study of the Wedderburn decomposition of rational group algebras has garnered significant attention in recent research, primarily due to its implications for understanding various algebraic structures. For instance, the Wedderburn decomposition for a finite group $G$ can help to describe the automorphism group of the rational group algebra $\mathbb{Q}G$ (see \cite{Herman, Olivieri}) and contributes to the analysis of the unit group of the integral group ring $\mathbb{Z}G$ (see \cite{Rio-Rui, Jes-Rio, Jes-Lea, Rit-Seh}). The Wedderburn decomposition of the rational group algebras for various families of groups has been studied extensively in \cite{ BG, BG1, BM14, Ram, Jes-Lea-Paq, ODRS04, Olt07, PW}, where techniques such as field of character values, Shoda pairs, numerical representations of cyclotomic algebras and so forth, have been used to compute the simple components of rational group algebras.

	In our previous work \cite{Ram2}, we developed a combinatorial formula for computing the Wedderburn decomposition of the rational group algebra of a split metacyclic $p$-group, where $p$ is an odd prime. In this article, we extend that result to cover all ordinary metacyclic $p$-groups, for any prime $p$. For an ordinary metacyclic $p$-group $G$ as defined in \eqref{prest:metacyclic}, we prove Theorem \ref{thm:wedderburnmetacyclic}, which offers a combinatorial method to determine the Wedderburn decomposition of $\mathbb{Q}G$. Our result simplifies the computation of the Wedderburn decomposition of $\mathbb{Q}G$ using only the numerical values of $n$, $m$, $r$ and $s$.
	
	\begin{theorem}\label{thm:wedderburnmetacyclic}
		Let $p$ be a prime and $\zeta_d$ a primitive $d$-th root of unity for some positive integer $d$. Consider an ordinary, finite, non-cyclic metacyclic $p$-group $G$, with a unique reduced presentation:
		\begin{equation*}
			G = \langle a, b ~ : ~ a^{p^n} = 1, \, b^{p^m} = a^{p^{n-r}}, \, bab^{-1} = a^{1+p^{n-s}} \rangle,
		\end{equation*}
		for certain integers $m, n, r$ and $s$ (as defined in \eqref{prest:metacyclic}). Then we have the following.
		\begin{enumerate}
			\item {\bf Case ($n-s \geq m$).} In this case,
			$$\mathbb{Q}G \cong \mathbb{Q} \bigoplus_{\lambda=1}^m (p^\lambda+p^{\lambda-1})\mathbb{Q}(\zeta_{p^\lambda}) \bigoplus_{\lambda=m+1}^{n-s}p^m \mathbb{Q}(\zeta_{p^\lambda}) \bigoplus_{t=1}^{s} p^{m-t}M_{p^t}(\mathbb{Q}(\zeta_{p^{n-s}})).$$
			\item {\bf Case ($n-s < m$).} Suppose $m = (n-s)+k$. Then we have following two sub-cases.
			\begin{enumerate}
				\item {\bf Sub-case ($k \leq s-r$).} In this sub-case, 
				\begin{align*}
					\mathbb{Q}G \cong & \mathbb{Q} \bigoplus_{\lambda=1}^{n-s} (p^\lambda+p^{\lambda-1})\mathbb{Q}(\zeta_{p^\lambda}) \bigoplus_{\lambda=n-s+1}^{m}p^{n-s} \mathbb{Q}(\zeta_{p^\lambda}) \bigoplus_{t=1}^{k-1} p^{n-s}M_{p^t}(\mathbb{Q}(\zeta_{p^{n-s}}))\\ &\bigoplus_{t=1}^{k-1}\bigoplus_{\lambda=n-s+1}^{m-t} (p^{n-s}-p^{n-s-1})M_{p^t}(\mathbb{Q}(\zeta_{p^{\lambda}})) \bigoplus_{t=k}^{s} p^{m-t}M_{p^t}(\mathbb{Q}(\zeta_{p^{n-s}})).
				\end{align*}
				\item {\bf Sub-case ($k > s-r$).} In this sub-case, 
				\begin{align*}
					\mathbb{Q}G \cong & \mathbb{Q} \bigoplus_{\lambda=1}^{n-s} (p^\lambda+p^{\lambda-1})\mathbb{Q}(\zeta_{p^\lambda}) \bigoplus_{\lambda=n-s+1}^{m}p^{n-s} \mathbb{Q}(\zeta_{p^\lambda}) \bigoplus_{t=1}^{s-r} p^{n-s}M_{p^t}(\mathbb{Q}(\zeta_{p^{n-s}}))\\ &\bigoplus_{t=1}^{s-r}\bigoplus_{\lambda=n-s+1}^{m-t} (p^{n-s}-p^{n-s-1})M_{p^t}(\mathbb{Q}(\zeta_{p^{\lambda}})) \bigoplus_{t=s-r+1}^s p^{n-r-t}M_{p^t}(\mathbb{Q}(\zeta_{p^{m+r-s}})).
				\end{align*}
			\end{enumerate}
		\end{enumerate}
	\end{theorem}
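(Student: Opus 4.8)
The plan is to reduce the whole computation to counting complex irreducible characters together with their fields of character values, and then to invoke a Schur–index vanishing result to identify each simple component as a full matrix ring over a cyclotomic field. Write $A=\langle a\rangle\trianglelefteq G$, so that $G/A$ is cyclic of order $p^m$ generated by the image of $b$, and the commutator subgroup is $G'=\langle a^{p^{n-s}}\rangle$ of order $p^s$. First I would dispose of the commutative part: since $G/G'\cong \mathbb{Z}/p^{n-s}\times\mathbb{Z}/p^m$ in both the split and the non-split cases (the relation $b^{p^m}=a^{p^{n-r}}$ becomes trivial modulo $G'$ because $r\le s$, as $a^{p^{n-r}}\in G'$ iff $n-s\le n-r$), the linear characters contribute exactly $\mathbb{Q}(G/G')$, whose decomposition into $\bigoplus_\lambda c_\lambda\,\mathbb{Q}(\zeta_{p^\lambda})$ is a routine count of cyclic quotients. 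This already yields the term $\mathbb{Q}\oplus\bigoplus(p^\lambda+p^{\lambda-1})\mathbb{Q}(\zeta_{p^\lambda})\oplus\cdots$ common to all cases and explains why the commutative part depends only on the unordered pair $\{m,\,n-s\}$.

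For the non-linear part I would run Clifford theory over $A$. A character $\lambda\in\widehat{A}$ of order $p^e$ is non-trivial on $G'$ precisely when $e>n-s$, and a lifting-the-exponent computation gives $v_p\big((1+p^{n-s})^i-1\big)=(n-s)+v_p(i)$; here the hypothesis that $G$ is ordinary (for $p=2$ this is exactly $s<n-1$, i.e.\ $n-s\ge 2$) is what makes this formula hold uniformly in $p$. Consequently $\lambda$ has inertia group $T$ with $[G:T]=p^{t}$, where $t=e-(n-s)\le s\le m$, and, $T/A$ being cyclic, $\lambda$ extends to $T$ and induces to $[T:A]=p^{m-t}$ irreducible characters of $G$ of degree $p^{t}$. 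Counting the $\widehat{A}$-orbits (there are $\phi(p^{n-s})$ of them for every admissible $e$) then produces the correct number of characters of each degree. The decisive point is the field of values: the $\langle 1+p^{n-s}\rangle$-orbit of $\lambda$ always contributes $\mathbb{Q}(\zeta_{p^{n-s}})$, while the value $\tilde\lambda(b^{p^{t}})$ of an extension satisfies $\tilde\lambda(b^{p^{t}})^{p^{m-t}}=\lambda(a^{p^{n-r}})$, a root of unity of order $p^{\max(0,\,e-(n-r))}$. Analysing the two regimes $e\le n-r$ (equivalently $t\le s-r$, where the extension value ranges over all $p^{m-t}$-th roots of unity, giving centres $\mathbb{Q}(\zeta_{p^\lambda})$ with $\lambda=\max(n-s,\cdot)$ up to $m-t$) and $e>n-r$ (where every extension value has order exactly $p^{m+r-s}$) separates the two sub-cases via the comparison of $m+r-s$ with $n-s$, i.e.\ of $k$ with $s-r$. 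Dividing each character count by $\phi(p^\lambda)=|\gal(\mathbb{Q}(\zeta_{p^\lambda})/\mathbb{Q})|$ then produces exactly the multiplicities $p^{n-s}$, $p^{n-s}-p^{n-s-1}$, $p^{m-t}$ and $p^{n-r-t}$ appearing in the statement.

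To pass from a Galois orbit of characters to an honest Wedderburn component I would use that $G$ is strongly monomial, so by the Brauer–Witt theorem recalled above each component is Brauer equivalent to a cyclotomic algebra with centre $\mathbb{Q}(\chi)=\mathbb{Q}(\zeta_{p^\lambda})$; it then suffices to show the Schur index is $1$, giving $M_{p^t}(\mathbb{Q}(\zeta_{p^\lambda}))$. For odd $p$ this is immediate from Roquette's theorem that every irreducible character of a $p$-group has Schur index $1$ over its field of values. For $p=2$ one must rule out a quaternionic (index $2$) component; since the relevant fields $\mathbb{Q}(\zeta_{2^\lambda})$ with $\lambda\ge n-s\ge 2$ are totally complex, only the dyadic place can obstruct splitting, and I would check that the local index at $2$ is trivial. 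This is exactly the step that fails for generalized quaternion groups and, more generally, for the excluded exceptional $2$-groups, so the ordinariness hypothesis is used in an essential way. Equivalently, and this is the route I expect to be cleanest for the constructive part of the statement, one may build each rational irreducible explicitly by summing the Galois conjugates of the induced complex character and verify directly that its endomorphism algebra is the field $\mathbb{Q}(\zeta_{p^\lambda})$ rather than a proper division algebra.

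I expect the main obstacle to be the field-of-values bookkeeping: disentangling the two sources of the centre — the orbit contribution, which is always $\mathbb{Q}(\zeta_{p^{n-s}})$, and the extension value $\tilde\lambda(b^{p^{t}})$, which is governed by the non-split relation $b^{p^m}=a^{p^{n-r}}$ — and proving that distinct extensions genuinely produce characters with the asserted (and distinct) fields, with Galois orbits of the expected sizes and no unexpected fusion. The Schur-index vanishing for $p=2$ is the second delicate ingredient; everything else is the routine arithmetic of $p$-power cyclotomic fields.
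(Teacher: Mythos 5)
Your proposal is correct and follows essentially the same route as the paper: linear characters handled through $G/G'\cong C_{p^{n-s}}\times C_{p^m}$, non-linear ones through induction from the cyclic normal subgroup $\langle a\rangle$ with the order computation $v_p\bigl((1+p^{n-s})^i-1\bigr)=(n-s)+v_p(i)$, Galois-orbit bookkeeping driven by the order of the extension value $\tilde\lambda(b^{p^t})$ (the paper's $\omega$ with $\omega^{p^{m-t}}=\zeta^{lp^{n+s-r-t}}$), and Schur index $1$. The only cosmetic difference is at $p=2$: rather than computing a local index at the dyadic place, the paper simply observes that $n-s\ge 2$ forces $\sqrt{-1}\in\mathbb{Q}(\chi)$ and applies Roquette's theorem over $\mathbb{Q}(\chi)$, which is the same essential use of ordinariness you identify.
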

	
	Several researchers have explored the complex irreducible representations and characters of metacyclic groups (see \cite{BGBasmaji, HK, HS, Munkholm}). In this paper, we explicitly compute all inequivalent complex irreducible representations for all ordinary metacyclic $p$-groups. Concurrently, we investigate the rational representations of these groups, which play a pivotal role in proving Theorem \ref{thm:wedderburnmetacyclic}. The structure of the paper is as follows. Let $G$ be an ordinary metacyclic $p$-group (as defined in \eqref{prest:metacyclic}). In Section \ref{sec:notation}, we introduce key notations that will be consistently used throughout the paper. Section \ref{sec:complexrep} focuses on the irreducible complex representations of $G$, while in Section \ref{sec:rationalrep}, we discuss results concerning the irreducible rational representations of $G$. In Theorem \ref{thm:rationalrep}, we present a combinatorial formula for enumerating the irreducible rational representations of $G$ with distinct degrees. In Subsection \ref{subsec:rationalmatrixrep}, we give method for computing the irreducible rational matrix representations of $G$. Further, Theorem \ref{thm:reqpairmeta} outlines techniques for determining all inequivalent irreducible matrix representations of $G$ over $\mathbb{Q}$. In Section \ref{sec:mainresult}, we present the proof of Theorem \ref{thm:wedderburnmetacyclic} and derive Corollary \ref{cor:isogroupalgebra} as an immediate consequence, which states that the rational group algebras of two ordinary metacyclic $p$-groups are isomorphic if and only if the groups themselves are isomorphic. Finally, in Subsection \ref{exam:metacyclic}, we conclude by providing examples that illustrate the Wedderburn decomposition of rational group algebras corresponding to various ordinary metacyclic $p$-groups, applying the different cases of Theorem \ref{thm:wedderburnmetacyclic}.
	
	
	\section{Notation}\label{sec:notation}
	In this section, we establish the notation, most of which adheres to standard conventions. Throughout this paper, $p$ denotes a prime number. For a finite group $G$, the following notation is used consistently.
	
	\begin{tabular}{cl}
		$G'$ & the derived (commutator) subgroup of $G$\\
		$|S|$ & the cardinality of a set $S$\\
		$\Irr(G)$ & the set of irreducible complex characters of $G$\\
		$\lin(G)$ & $\{\chi \in \Irr(G) : \chi(1) = 1\}$\\
		$\nl(G)$ & $\{\chi \in \Irr(G) : \chi(1) \neq 1\}$\\
		$\FIrr(G)$ & the set of faithful irreducible complex characters of $G$\\
		$\Irr^{(\lambda)}(G)$ & $\{\chi \in \Irr(G) : \chi(1) = \lambda\}$\\
		$\cd(G)$ & $\{ \chi(1) : \chi \in \Irr(G) \}$\\
		$\Irr_{\mathbb{Q}}(G)$ & the set of non-trivial irreducible rational characters of $G$\\
		$\Irr_{\mathbb{Q}}^{(\lambda)}(G)$ & $\{\chi \in \Irr_{\mathbb{Q}}(G) : \chi(1) = \lambda\}$\\
		$\mathbb{F}(\chi)$ & the smallest field containing the values $\{\chi(g) : g \in G\}$, where $\chi \in \Irr(G)$ and $\mathbb{F}$ is a base field\\
		$m_{\mathbb{F}}(\chi)$ & the Schur index of $\chi \in \Irr(G)$ over the field $\mathbb{F}$\\
		$\Omega(\chi)$ & $m_{\mathbb{Q}}(\chi)\sum_{\sigma \in \gal(\mathbb{Q}(\chi) / \mathbb{Q})} \chi^{\sigma}$, where $\chi \in \Irr(G)$\\
		$\cd_\mathbb{Q}(G)$ &  $\{ \Omega(\chi)(1) : \chi \in \Irr(G) \}$\\
		$\mathbb{F}G$ & the group algebra of $G$ over the field $\mathbb{F}$\\
		$M_q(D)$ & a full matrix ring of size $q$ over the skewfield $D$\\
		$Z(B)$ & the center of an algebraic structure $B$\\
		$\phi(q)$ & Euler's totient function
	\end{tabular}
	
	\section{Complex representations of ordinary metacyclic $p$-groups}\label{sec:complexrep}
	Basmaji \cite{BGBasmaji} and Munkholm \cite{Munkholm} obtained complex irreducible representations of metacyclic groups. In this section, we describe the complex irreducible representations of ordinary metacyclic $p$-groups (defined in \eqref{prest:metacyclic}) using the techniques outlined in \cite{BGBasmaji}, as it is highly utilized for upcoming sections. We begin with Lemma \ref{lemma:order}.

	\begin{lemma}\label{lemma:order}
		Let $p$ be a prime, and let $\alpha$ and $\beta$ be non-negative integers with $\alpha \neq 0$. Then the multiplicative order of $1+p^{\alpha}$ modulo $p^{\alpha+\beta}$ is $p^\beta$.
	\end{lemma}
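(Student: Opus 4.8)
The plan is to pin down the order by computing the $p$-adic valuation of $(1+p^\alpha)^{p^j}-1$. First I would observe that $1+p^\alpha \equiv 1 \pmod{p}$, so $1+p^\alpha$ lies in the kernel of the reduction map $(\mathbb{Z}/p^{\alpha+\beta}\mathbb{Z})^\times \to (\mathbb{Z}/p\mathbb{Z})^\times$; this kernel has order $p^{\alpha+\beta-1}$ and is therefore a $p$-group, so the order of $1+p^\alpha$ modulo $p^{\alpha+\beta}$ is automatically a power of $p$. It thus suffices to determine, among the exponents $p^j$, the least one for which $(1+p^\alpha)^{p^j} \equiv 1 \pmod{p^{\alpha+\beta}}$.

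The heart of the argument is the valuation identity $v_p\big((1+p^\alpha)^{p^j}-1\big) = \alpha + j$ for all $j \ge 0$, where $v_p$ denotes the $p$-adic valuation. I would prove this by induction on $j$. The base case $j=0$ is immediate since $(1+p^\alpha)-1 = p^\alpha$. For the inductive step, writing $(1+p^\alpha)^{p^j} = 1 + u\,p^{\alpha+j}$ with $p \nmid u$, I would expand $(1+u\,p^{\alpha+j})^p$ by the binomial theorem: the linear term contributes $u\,p^{\alpha+j+1}$, of valuation exactly $\alpha+j+1$, and it then remains to check that every higher term $\binom{p}{i}u^i p^{i(\alpha+j)}$ with $2 \le i \le p$ has valuation strictly exceeding $\alpha+j+1$, so that it cannot cancel the linear term.

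The main obstacle is precisely this last valuation comparison, and it is where the prime $2$ must be handled separately. For $2 \le i \le p-1$ one has $v_p\binom{p}{i}=1$, giving valuation $1 + i(\alpha+j) \ge 1 + 2(\alpha+j) > \alpha+j+1$; the delicate term is $i=p$, namely $u^p p^{p(\alpha+j)}$, of valuation $p(\alpha+j)$. For odd $p$ this is at least $3(\alpha+j) > \alpha+j+1$, but for $p=2$ it equals $2(\alpha+j)$, which exceeds $\alpha+j+1$ only when $\alpha+j \ge 2$. Hence for $p=2$ the induction goes through using $\alpha \ge 2$ (the standing hypothesis in the ordinary presentation \eqref{prest:metacyclic}, where $s<n-1$ forces $n-s\ge 2$), while for odd $p$ no restriction beyond $\alpha \ge 1$ is needed.

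Finally, granting the identity $v_p\big((1+p^\alpha)^{p^j}-1\big)=\alpha+j$, the congruence $(1+p^\alpha)^{p^j}\equiv 1 \pmod{p^{\alpha+\beta}}$ holds if and only if $\alpha+j \ge \alpha+\beta$, that is, if and only if $j \ge \beta$. Since the order is a power of $p$, the least such exponent is $p^\beta$, which is therefore the multiplicative order of $1+p^\alpha$ modulo $p^{\alpha+\beta}$.
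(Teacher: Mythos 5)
Your proof is correct, but it takes a genuinely different route from the paper's. The paper sets $x=1+p^\alpha$ and computes $w_p(x^{p^\beta}-1)$ via the factorization $\frac{x^{p^\beta}-1}{x-1}=\prod_{k=1}^{\beta}\Phi_{p^k}(x)$, claiming each cyclotomic factor $\Phi_{p^k}(x)$ contributes exactly one power of $p$; you instead run the standard lifting-the-exponent induction, proving $v_p\big((1+p^\alpha)^{p^j}-1\big)=\alpha+j$ directly from the binomial expansion. The two arguments hinge on the same delicate point at $p=2$: the paper's assertion $w_2(\Phi_{2^k}(x))=w_2(1+x^{2^{k-1}})=1$ requires $x\equiv 1\pmod 4$, i.e.\ $\alpha\ge 2$, which is precisely the condition you isolate in the $i=p$ term of the binomial expansion. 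You are more explicit about this than the paper is: as literally stated, the lemma fails for $p=2$, $\alpha=1$, $\beta\ge 2$ (the order of $3$ modulo $8$ is $2$, not $4$), and both proofs silently or explicitly invoke $\alpha\ge 2$ for $p=2$, which is harmless in the paper's application since ordinary metacyclic $2$-groups have $n-s\ge 2$. Your version also supplies the small preliminary observation that the order is a $p$-power (via the kernel of reduction mod $p$), which the paper leaves implicit; the paper's cyclotomic factorization, in exchange, delivers the full valuation $w_p(x^{p^\beta}-1)=\alpha+\beta$ in one line once the claim about $\Phi_{p^k}(x)$ is granted. Both are fine; yours is more elementary and more self-auditing at $p=2$.
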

	\begin{proof}
		For each non-zero integer $y$, define $w_p(y) = \max \{l : p^l \mid y\}$. Further, let $x=1+p^{\alpha}$. Then we need to show that $w_p(x^{p^\beta}-1)=\alpha + \beta$. We have 
		\begin{equation}\label{eq:1}
			\frac{x^{p^\beta}-1}{x-1}=\prod_{k=1}^{\beta}\Phi_{p^k}(x),
		\end{equation}
		where $\Phi_{p^k}(x)$ represents the $p^k$-th cyclotomic polynomial of $x$ over $\mathbb{Q}$. Furthermore, we have $\Phi_{p^k}(x)=1+x^{p^{k-1}}+x^{2p^{k-1}}+\cdots +x^{(p-1)p^{k-1}}$ for an odd prime $p$, and $\Phi_{2^k}(x)=1+x^{2^{k-1}}$. Observe that for $1 \leq k \leq \beta$, we have $w_p(\Phi_{p^k}(x))=1$, where $p$ is any prime. Therefore, from \eqref{eq:1}, we get
		\begin{align*}
			w_p\left(\frac{x^{p^\beta}-1}{x-1}\right)=\sum_{k=1}^{s}w_p(\Phi_{p^k}(x)) & \implies w_p(x^{p^\beta}-1)-w_p(x-1)=\beta\\
			& \implies w_p(x^{p^\beta}-1)-\alpha=\beta\\
			& \implies w_p(x^{p^\beta}-1)=\alpha+\beta.
		\end{align*}
		This completes the proof of Lemma \ref{lemma:order}.
	\end{proof}
	
	\begin{corollary}\label{cor:order}
		Let $p$ be a prime, and let $n$ and $s$ be non-negative integers such that $s <n$. Then the multiplicative order of $1+p^{n-s}$ modulo $p^n$ is $p^s$.
	\end{corollary}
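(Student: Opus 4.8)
The plan is to obtain this as an immediate specialization of Lemma~\ref{lemma:order}, since the two statements have exactly the same shape once the parameters are matched. First I would set $\alpha = n - s$ and $\beta = s$, and then verify that the hypotheses of Lemma~\ref{lemma:order} are met. The lemma requires $\alpha$ and $\beta$ to be non-negative integers with $\alpha \neq 0$: here $\beta = s \geq 0$ by assumption, and because $s < n$ we have $\alpha = n - s \geq 1 > 0$, so the nonvanishing condition on $\alpha$ holds. This is the only place the hypothesis $s < n$ is used, and it is exactly what guarantees that $1 + p^{\alpha} = 1 + p^{n-s}$ is a genuine (nontrivial) unit to which the lemma applies.

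Next I would compute $\alpha + \beta = (n - s) + s = n$, so that the modulus $p^{\alpha+\beta}$ in the lemma becomes $p^n$. Applying Lemma~\ref{lemma:order} with this choice of $\alpha$ and $\beta$ then yields directly that the multiplicative order of $1 + p^{n-s}$ modulo $p^n$ equals $p^{\beta} = p^s$, which is precisely the assertion of the corollary. Since this is a pure substitution into an already-proved result, there is no real obstacle to overcome; the entire content is the bookkeeping that $\alpha = n-s$ is positive and that $\alpha + \beta = n$, both of which follow at once from $s < n$.
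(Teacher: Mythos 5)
Your proposal is correct and is exactly the paper's argument: the corollary is obtained by substituting $\alpha = n-s$ and $\beta = s$ into Lemma~\ref{lemma:order}. Your additional bookkeeping that $s < n$ guarantees $\alpha \neq 0$ and that $\alpha + \beta = n$ is the (implicit) content of the paper's one-line proof.
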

	\begin{proof}
	By substituting $\alpha = n - s$ and $\beta = s$ in Lemma \ref{lemma:order}, the proof follows directly.
	\end{proof}
	Let $\zeta$ be a primitive $p^n$-th root of unity in $\mathbb{C}$ and let  $U :=\langle \zeta \rangle$. Suppose $\sigma : U \rightarrow U$ is defined as $\sigma(\zeta^i)=\zeta^{i(1+p^{n-s})}$ for $\zeta^i \in U$. This defines a natural group action of $\langle \sigma \rangle$ on $U$. Note that $\sigma^{p^s}(\zeta^i)=\zeta^{i(1+p^{n-s})^{p^s}}=\zeta^i$ as $1+p^{n-s}$ has multiplicative order $p^s$ modulo $p^n$ (see Corollary \ref{cor:order}). Thus, no orbit has size greater than $p^s$. Lemma \ref{lemma:sigmaorbitsize} describes the number of orbits of distinct size under the above action.

	
	\begin{lemma}\label{lemma:sigmaorbitsize}
		Under the above defined action of $\langle \sigma \rangle$ on $U$, we have the following.
		\begin{enumerate}
			\item Number of orbits of size $1$ is equal to $p^{n-s}$.
			\item Number of orbits of size $p^t$ is equal to $\phi(p^{n-s})$ for $1 \leq t \leq s$. 
		\end{enumerate}
	\end{lemma}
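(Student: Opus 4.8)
The plan is to translate the statement into elementary $p$-adic arithmetic on exponents. Identifying $U=\langle\zeta\rangle$ with the additive group $\mathbb{Z}/p^n\mathbb{Z}$ via $\zeta^i\leftrightarrow i$, the map $\sigma$ becomes multiplication by $c:=1+p^{n-s}$, so the orbit of $i$ is $\{c^\ell i \pmod{p^n} : \ell\ge 0\}$. Since $\langle\sigma\rangle$ has order $p^s$ by Corollary \ref{cor:order}, every orbit size divides $p^s$ and is therefore a power of $p$; concretely, the orbit of $i$ has size $p^e$, where $e$ is the least non-negative integer with $c^{p^e}i\equiv i\pmod{p^n}$, i.e. with $(c^{p^e}-1)\,i\equiv 0\pmod{p^n}$.

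The key step is to pin down this exponent $e$ using Lemma \ref{lemma:order}. Writing $w_p(\cdot)$ for the $p$-adic valuation and applying the lemma with $\alpha=n-s$ and $\beta=e$ gives $w_p(c^{p^e}-1)=(n-s)+e$. Hence, for $i\neq 0$ with $w_p(i)=j$ (so $0\le j\le n-1$), the congruence $(c^{p^e}-1)\,i\equiv 0\pmod{p^n}$ is equivalent to $(n-s)+e+j\ge n$, that is, $e\ge s-j$. Therefore the orbit of $i$ has size $p^{\max\{0,\,s-j\}}$. In particular, $i$ is a fixed point exactly when $w_p(i)\ge s$, and an orbit of size $p^t$ with $1\le t\le s$ arises precisely from those $i$ with $w_p(i)=s-t$.

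It then remains to count, using that the number of elements of $\mathbb{Z}/p^n\mathbb{Z}$ of valuation exactly $k$ is $p^{n-k}-p^{n-k-1}=\phi(p^{n-k})$ for $0\le k\le n-1$ (with $w_p(0)$ treated as $\ge n$). For part (1), the fixed points are exactly the multiples of $p^s$, of which there are $p^{n-s}$, giving $p^{n-s}$ orbits of size $1$. For part (2), the elements producing orbits of size $p^t$ are those of valuation $s-t$, numbering $\phi(p^{\,n-s+t})$; dividing by the common orbit size $p^t$ yields $\phi(p^{\,n-s+t})/p^t=p^{n-s}-p^{n-s-1}=\phi(p^{n-s})$ orbits, independent of $t$.

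The only genuine subtlety is the second step: everything rests on the \emph{exact} valuation $w_p(c^{p^e}-1)=(n-s)+e$ furnished by Lemma \ref{lemma:order}, which is what forces the clean threshold $e=\max\{0,\,s-j\}$; once this is in hand the counting is routine. As a consistency check I would verify that the two counts partition all $p^n$ elements, namely $p^{n-s}\cdot 1+\sum_{t=1}^{s}\phi(p^{n-s})\,p^t=p^{n-s}+\phi(p^{n-s})\,(p^{s+1}-p)/(p-1)=p^n$, confirming that no orbit has been miscounted and that orbits of sizes $1,p,\dots,p^s$ exhaust $U$.
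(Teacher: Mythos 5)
Your proposal is correct and follows essentially the same route as the paper: both reduce the orbit-size question to the congruence $\bigl((1+p^{n-s})^{p^e}-1\bigr)i\equiv 0\pmod{p^n}$, both rest on the exact valuation $w_p\bigl((1+p^{n-s})^{p^e}-1\bigr)=(n-s)+e$ from Lemma \ref{lemma:order} (which the paper phrases as $\gcd\bigl((1+p^{n-s})^{p^t}-1,\,p^n\bigr)=p^{n-s+t}$), and both finish by counting the elements with a given exact orbit size and dividing by $p^t$. Your organization by the exact valuation of the exponent $i$ is just a repackaging of the paper's ``fixed by $\sigma^{p^t}$ but not by $\sigma^{p^{t-1}}$'' step, and your closing consistency check $p^{n-s}+\sum_{t=1}^{s}\phi(p^{n-s})p^t=p^n$ is a nice (if optional) sanity check.
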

	\begin{proof}
		\begin{enumerate}
			\item Let $\zeta^x$ be a representative of an orbit of size $1$ for some $0 \leq x \leq p^n-1$. Hence, we have 
			\begin{align*}
				\sigma(\zeta^x)=\zeta^x & \implies \zeta^{(1+p^{n-s})x} = \zeta^x\\
				& \implies (1+p^{n-s})x \equiv x\pmod{p^n}\\
				& \implies p^{n-s}x \equiv 0\pmod{p^n}.
			\end{align*}
			Further, we have $\gcd\left(p^{n-s}, p^n\right)=p^{n-s}$. Therefore, the equation $p^{n-s}x \equiv 0\pmod{p^n}$ has only $p^{n-s}$ many incongruent solutions. Hence, the result follows.
			
			\item Let $\zeta^x$ be a representative of an orbit of size $p^t$ ($1 \leq t \leq s$) for some $0 \leq x \leq p^n-1$. Hence, we have 
			\begin{align*}
				\sigma^{p^t}(\zeta^x)=\zeta^x & \implies \zeta^{(1+p^{n-s})^{p^t}x} = \zeta^x\\
				& \implies (1+p^{n-s})^{p^t}x \equiv x\pmod{p^n}\\
				& \implies \left((1+p^{n-s})^{p^t}-1\right)x \equiv 0\pmod{p^n}.
			\end{align*}
			Further, we have $\gcd\left((1+p^{n-s})^{p^t}-1, p^n\right)=p^{n-s+t}$. Therefore, the equation $\left((1+p^{n-s})^{p^t}-1\right)x \equiv 0\pmod{p^n}$ has only $p^{n-s+t}$ many incongruent solutions. Furthermore, observe that 
			\begin{equation*}
				(1+p^{n-s})^{p^k}x \equiv x\pmod{p^n} \implies (1+p^{n-s})^{p^{k+1}}x \equiv x\pmod{p^n},
			\end{equation*}
			for all $0 \leq k \leq t-1$. Hence, the number of orbits of size $p^t$ is equal to $\frac{p^{n-s+t}-p^{n-s+t-1}}{p^t}=p^{n-s}-p^{n-s-1}=\phi(p^{n-s})$.
		\end{enumerate}
	\end{proof}

	Under the above set-up, in Remark \ref{remark:meta}, we give a brief description to construct all irreducible complex representations of ordinary metacyclic $p$-groups.

	\begin{remark}\label{remark:meta}\textnormal{ Let $G$ be an ordinary metacyclic $p$-group defined in \eqref{prest:metacyclic}. Then we have the following. 
		\begin{enumerate} 
				\item It is easy to check that
				$\{1\}, \{\zeta^{p^s}\}, \{\zeta^{2p^s}\}, \dots, \{\zeta^{(p^{n-s}-1)p^s}\}$
				are the only orbits of size one under the above defined action on $U$. Moreover, corresponding to the orbit $\mathcal{O}_{\zeta^{\lambda p^s}} = \{\zeta^{\lambda p^s}\}$ (for $0 \leq \lambda \leq p^{n-s} -1$) of size 1, the irreducible complex representations of $G$ can be defined as follows:
				\begin{equation}\label{eq:linearcomplexrep}
					T_{{\lambda p^s}, \omega}(a) = \zeta^{\lambda p^s} \quad \text{and} \quad T_{{\lambda p^s}, \omega}(b) = \omega,
				\end{equation}
				where $\zeta$ is any fixed primitive $p^n$-th root of unity and $\omega$ is a $p^m$-th root of unity. In this manner, we obtain $p^{n+m-s}=|G/G{}'|$ many degree 1 representations of $G$.
				\item For a fixed $t$ ($1\leq t \leq s$), we have
				$(1+p^{n-s})^{p^t} \equiv 1 \pmod{p^{n-s+t}}$ (see Lemma \ref{lemma:order}). This implies that $(1+p^{n-s})^{p^t}p^{s-t} \equiv p^{s-t}\pmod{p^n}$. Therefore, for a fixed $t$ ($1 \leq t\leq  s$),
				$$\mathcal{O}_{\zeta^{lp^{s-t}}}=\{\zeta^{lp^{s-t}}, \zeta^{(1+p^{n-s})lp^{s-t}}, \dots, \zeta^{(1+p^{n-s})^{p^t-1}lp^{s-t}} \},$$ 
				are the orbits of size $p^t$, where $1\leq l < p^{n-s+t}$ with $(l,p)=1$. Corresponding to an orbit $\mathcal{O}_{\zeta^{lp^{s-t}}}$, define a representation $T_{lp^{s-t}, \omega} : G \rightarrow GL_{p^t}(\mathbb{C})$ of degree $p^t$, given by
				\begin{equation}\label{eq:non-linearcomplexrep}
					\begin{aligned}
						T_{{lp^{s-t}}, \omega}(a) &= \left(
						\begin{array}{ccccc}
							\zeta^{lp^{s-t}} & 0 & 0 & \cdots & 0\\
							0 & \zeta^{(1+p^{n-s})lp^{s-t}} & 0 & \cdots & 0\\
							0 & 0 & \zeta^{(1+p^{n-s})^2lp^{s-t}} & \cdots & 0\\
							\vdots & \vdots & \vdots & \ddots \\
							0 & 0 & 0 & \cdots & \zeta^{(1+p^{n-s})^{p^t-1}lp^{s-t}}
						\end{array}\right) \quad \text{and}\\
						T_{{lp^{s-t}}, \omega}(b) &= \left(
						\begin{array}{ccccc}
							0 & 0 & \cdots  & 0 & \omega\\
							1 & 0 & \cdots & 0 & 0\\
							0 & 1 & \cdots & 0 & 0\\
							\vdots & \vdots & \ddots & & \vdots \\
							0 & 0 & \cdots & 1 & 0
						\end{array}\right),
					\end{aligned}
				\end{equation}
				where $\zeta$ is any fixed primitive $p^n$-th root of unity and $\omega^{p^{m-t}}=\zeta^{lp^{n+s-r-t}}$. This defines an irreducible complex representation of $G$ (see \cite{BGBasmaji}). 
		\end{enumerate}}
	\end{remark}
	
	Proposition \ref{prop:metacomplexrep} describes all inequivalent irreducible complex representations of an ordinary metacyclic $p$-group.
	\begin{proposition}\label{prop:metacomplexrep}
		Let $p$ be a prime. Consider an ordinary, finite, non-cyclic metacyclic $p$-group $G$, with a unique reduced presentation:
		\begin{equation*}
			G = \langle a, b ~ : ~ a^{p^n} = 1, \, b^{p^m} = a^{p^{n-r}}, \, bab^{-1} = a^{1+p^{n-s}} \rangle,
		\end{equation*}
		for certain integers $m, n, r$ and $s$ (as defined in \eqref{prest:metacyclic}). Then we have the following.
		\begin{enumerate}			
			\item $\cd(G)=\{p^t : 0 \leq t \leq s\}$.
			\item $|\lin(G)|= p^{n+m-s}$, and all linear complex representations of $G$ are given by \eqref{eq:linearcomplexrep}.
			\item For a given $t$ ($1 \leq t \leq s$), $|\Irr^{(p^t)}(G)|=\phi(p^{n-s})p^{m-t}$, and all inequivalent irreducible complex representations of degree $p^t$  are given by \eqref{eq:non-linearcomplexrep}.
			\item $|\Irr(G)|= p^{n+m-s} + p^{n+m-s-1} - p^{n+m-2s-1}$.
		\end{enumerate}
	\end{proposition}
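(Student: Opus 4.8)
The plan is to run Clifford theory relative to the cyclic normal subgroup $A = \langle a\rangle$, which has order $p^n$ and satisfies $G/A \cong C_{p^m}$ (generated by the image of $b$). Since $A$ is abelian, its irreducible characters are the maps $a \mapsto \zeta^j$, and I identify $\widehat A$ with $U = \langle\zeta\rangle$; under this identification the conjugation action of $b$, coming from $bab^{-1} = a^{1+p^{n-s}}$, is exactly the action of $\sigma$ from Remark \ref{remark:meta}. Thus the $G$-orbits on $\widehat A$ are precisely the $\langle\sigma\rangle$-orbits on $U$, which Lemma \ref{lemma:sigmaorbitsize} counts by size.

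First I would fix an orbit $\mathcal O$ of size $p^t$ and a character $\psi \in \mathcal O$. By orbit--stabilizer, the inertia group $I := I_G(\psi)$ satisfies $[G:I] = p^t$ and contains $A$ with $I/A$ a subgroup of the cyclic group $G/A$, hence cyclic of order $p^{m-t}$. Because $I/A$ is cyclic, $\psi$ extends to $I$, and by Gallagher's theorem the characters of $I$ lying over $\psi$ are exactly the $p^{m-t}$ extensions $\{\beta\widetilde\psi : \beta \in \Irr(I/A)\}$. The Clifford correspondence then gives a bijection between these extensions and the irreducible characters of $G$ lying over $\mathcal O$, each induced character $\widetilde\psi^{\,G}$ being irreducible of degree $[G:I]\,\psi(1) = p^t$. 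Running $\mathcal O$ over a transversal of the orbits therefore produces every element of $\Irr(G)$ exactly once.

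The counting is then immediate from Lemma \ref{lemma:sigmaorbitsize}. The $p^{n-s}$ orbits of size $1$ contribute $p^{m}$ linear characters each, giving $|\lin(G)| = p^{n+m-s}$ (consistent with $|G/G'| = p^{n+m-s}$, since $G' = \langle a^{p^{n-s}}\rangle$ has order $p^s$), which is $(2)$; for $1 \le t \le s$ the $\phi(p^{n-s})$ orbits of size $p^t$ contribute $p^{m-t}$ characters each, giving $|\Irr^{(p^t)}(G)| = \phi(p^{n-s})\,p^{m-t}$, which is $(3)$. Since $n - s \ge 1$ forces $\phi(p^{n-s}) > 0$, every degree $p^t$ with $0 \le t \le s$ actually occurs, and by Corollary \ref{cor:order} no orbit, hence no degree, exceeds $p^s$; this is $(1)$. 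Summing,
$$|\Irr(G)| = p^{n+m-s} + \sum_{t=1}^{s}\phi(p^{n-s})\,p^{m-t} = p^{n+m-s} + p^{n+m-s-1} - p^{n+m-2s-1},$$
which is $(4)$.

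The remaining and most delicate task is to confirm that these abstract induced characters are realized by the explicit matrices of Remark \ref{remark:meta}, so that the reduced-presentation parameters enter correctly. I would check that \eqref{eq:linearcomplexrep} lists exactly the extensions over a size-$1$ orbit and that \eqref{eq:non-linearcomplexrep} is $\mathrm{Ind}_I^G\widetilde\psi$ written in the basis $\{1, b, \dots, b^{p^t-1}\}$. The main obstacle is verifying that these assignments are well defined, i.e. that the stated constraint $\omega^{p^{m-t}} = \zeta^{lp^{n+s-r-t}}$ is both forced by and sufficient for the relation $b^{p^m} = a^{p^{n-r}}$, together with the fact that $T_{lp^{s-t},\omega}(a)^{p^{n-r}}$ is scalar. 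Both reduce to the congruences $(1+p^{n-s})^{p^t} \equiv 1 \Mod{p^{n-s+t}}$ supplied by Lemma \ref{lemma:order}, after which the count of $p^{m-t}$ admissible values of $\omega$ matches the number of extensions and closes the argument.
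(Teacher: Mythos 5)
Your proposal is correct, and its combinatorial skeleton is the same as the paper's: both count $\Irr(G)$ by the $\langle\sigma\rangle$-orbits on $U\cong\widehat{\langle a\rangle}$ via Lemma \ref{lemma:sigmaorbitsize} and then multiply by the $p^{m-t}$ admissible choices of $\omega$, and your sum for $|\Irr(G)|$ agrees with the paper's (the paper additionally records the degree-sum identity $|G| = p^{n+m-s} + \phi(p^{n-s})\sum_{t=1}^{s}p^{2t}p^{m-t}$ as a consistency check). Where you genuinely diverge is in how the representation-theoretic core is justified: the paper simply cites Basmaji for the facts that the matrices in \eqref{eq:non-linearcomplexrep} are irreducible, pairwise inequivalent, and exhaust $\Irr^{(p^t)}(G)$, whereas you derive all of this from scratch via Clifford theory over $A=\langle a\rangle$ --- orbit--stabilizer to get $[G:I]=p^t$ with $I/A$ cyclic of order $p^{m-t}$, extendibility of $\psi$ to $I$ because $I/A$ is cyclic, Gallagher to enumerate the $p^{m-t}$ extensions, and the Clifford correspondence for irreducibility and exhaustiveness of the induced characters. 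This buys a self-contained proof (and correctly isolates the one computation that ties the construction to the reduced presentation, namely that $b^{p^m}=a^{p^{n-r}}$ forces $T_{lp^{s-t},\omega}(a^{p^{n-r}})$ to be the scalar $\zeta^{lp^{n+s-r-t}}$ and hence $\omega^{p^{m-t}}=\zeta^{lp^{n+s-r-t}}$, which rests on Lemma \ref{lemma:order} together with $r+t\le r+s\le n$), at the cost of being longer than the paper's citation-based argument.
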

	\begin{proof}
		\begin{enumerate}
			\item Clear.
			\item See Remark \ref{remark:meta} (1).
			\item By Lemma \ref{lemma:sigmaorbitsize}(2), for a fixed $t$ ($1 \leq t \leq s$), the number of orbits of size $p^t$ is $\phi(p^{n-s})$. Further, in Remark \ref{remark:meta}(2), the equation $\omega^{p^{m-t}}=\zeta^{lp^{n+s-r-t}}$ gives $p^{m-t}$ many distinct values of $\omega$. Thus, corresponding to an orbit of size $p^t$, we have $p^{m-t}$ many inequivalent irreducible complex representations of $G$ of degree $p^t$ (see \cite{BGBasmaji}). This proves the result. 
						
\item Here, \[|G| = p^{n+m-s} + \phi(p^{n-s})\sum_{t=1}^{s}p^{2t}p^{m-t}.\]
	Hence, the total number of inequivalent irreducible complex representations is $$p^{n+m-s} + \phi(p^{n-s})\sum_{t=1}^{s}p^{m-t}= p^{n+m-s} + p^{n+m-s-1} - p^{n+m-2s-1}.$$ \qedhere
		\end{enumerate}
	\end{proof}

	\section{Rational representations of ordinary metacyclic $p$-groups}\label{sec:rationalrep}
	In this section, we examine the irreducible rational representations of ordinary metacyclic $p$-groups $G$ (as defined in \eqref{prest:metacyclic}). To identify all the distinct irreducible rational representations of $G$, we divide them based on whether their kernels contain $G'$:
	\begin{enumerate}
		\item representations whose kernels contain $G'$; and
		\item representations whose kernels do not contain $G'$.
	\end{enumerate}
	We begin with the following Lemma.

	\begin{lemma}\label{lemma:abelian}
		For a given prime $p$, let $G = \langle a, b \mid  a^{p^n} = b^{p^m} = 1, ab = ba\rangle \cong C_{p^n} \times C_{p^m}$, where $n \geq m$. Then 
		the counting of non-trivial irreducible rational characters of $G$ of distinct degrees are the following. 	
	\begin{enumerate}	
	\item For each $\lambda$ with $1\leq \lambda \leq m$, $|\Irr_{\mathbb{Q}}^{(\phi(p^\lambda))}(G)|=p^{ \lambda -1 }(p + 1)$.
	\item For each $\lambda$ with $m+1 \leq \lambda \leq n$, $|\Irr_{\mathbb{Q}}^{(\phi(p^\lambda))}(G)|=p^{m}$.
		
		\end{enumerate}		
		\begin{proof} For any given prime $p$, the proof follows from \cite[Proposition 5]{Ram2}.		
\end{proof}	
	\end{lemma}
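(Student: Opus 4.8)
The plan is to translate the counting of rational characters into a counting of Galois orbits of complex characters. Since $G$ is abelian, every $\chi\in\Irr(G)$ is linear and its Schur index $m_{\mathbb{Q}}(\chi)$ is $1$, so each non-trivial irreducible rational character has the form $\Omega(\chi)=\sum_{\sigma\in\gal(\mathbb{Q}(\chi)/\mathbb{Q})}\chi^{\sigma}$ for a non-trivial $\chi\in\Irr(G)$, and two such characters coincide precisely when the corresponding $\chi$ lie in the same orbit under the natural action of $\gal(\mathbb{Q}(\zeta_{p^n})/\mathbb{Q})$ on $\Irr(G)$. Writing the character group as $\widehat{G}\cong C_{p^n}\times C_{p^m}$, I would classify each $\chi$ by its order $p^{\lambda}$ in $\widehat{G}$: since a character of order $p^{\lambda}$ has image the full group of $p^{\lambda}$-th roots of unity, its field of values is $\mathbb{Q}(\chi)=\mathbb{Q}(\zeta_{p^{\lambda}})$, whence $\Omega(\chi)(1)=[\mathbb{Q}(\zeta_{p^{\lambda}}):\mathbb{Q}]=\phi(p^{\lambda})$.

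The key point is that the orbit map $\sigma\mapsto\chi^{\sigma}$ is injective, because $\chi^{\sigma}=\chi$ forces $\sigma$ to fix $\mathbb{Q}(\chi)=\mathbb{Q}(\zeta_{p^{\lambda}})$ and hence $\sigma=\mathrm{id}$; so every Galois orbit of characters of order exactly $p^{\lambda}$ has full size $\phi(p^{\lambda})$. Consequently $|\Irr_{\mathbb{Q}}^{(\phi(p^{\lambda}))}(G)|=N_{\lambda}/\phi(p^{\lambda})$, where $N_{\lambda}$ is the number of $\chi\in\Irr(G)$ of order exactly $p^{\lambda}$. This reduces the lemma to an elementary count of $N_{\lambda}$: the characters of order dividing $p^{\lambda}$ form the product of the unique subgroups of $\widehat{G}$ of orders $p^{\min(\lambda,n)}$ and $p^{\min(\lambda,m)}$, so there are exactly $p^{\min(\lambda,n)+\min(\lambda,m)}$ of them, and $N_{\lambda}$ is obtained by subtracting the corresponding quantity for $\lambda-1$.

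Carrying out the two ranges then finishes the proof. For $1\le\lambda\le m$ one has $N_{\lambda}=p^{2\lambda}-p^{2\lambda-2}=p^{2\lambda-2}(p-1)(p+1)$, and dividing by $\phi(p^{\lambda})=p^{\lambda-1}(p-1)$ gives $p^{\lambda-1}(p+1)$, which is (1); for $m+1\le\lambda\le n$ one has $N_{\lambda}=p^{\lambda+m}-p^{\lambda+m-1}=p^{\lambda+m-1}(p-1)$, and dividing by $\phi(p^{\lambda})$ gives $p^{m}$, which is (2). I do not expect any genuine obstacle here: the whole argument is bookkeeping with $p$-adic valuations, and the only statement needing care is that each Galois orbit of order-$p^{\lambda}$ characters has full size $\phi(p^{\lambda})$, which is what makes the degree of $\Omega(\chi)$ equal the orbit size and lets $N_{\lambda}$ divide evenly by $\phi(p^{\lambda})$. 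For $p$-groups this is automatic because $\mathbb{Q}(\chi)$ is always a full cyclotomic field $\mathbb{Q}(\zeta_{p^{\lambda}})$; the same fact underlies \cite[Proposition 5]{Ram2}, which one may cite instead.
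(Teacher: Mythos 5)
Your proposal is correct, but it is not what the paper does: the paper offers no argument at all for this lemma, simply deferring to \cite[Proposition 5]{Ram2}, whereas you reconstruct the full elementary proof. Your route — identify each non-trivial irreducible rational character with a Galois orbit on $\widehat{G}\cong C_{p^n}\times C_{p^m}$, observe that a character of order $p^{\lambda}$ has $\mathbb{Q}(\chi)=\mathbb{Q}(\zeta_{p^{\lambda}})$ and hence lies in an orbit of full size $\phi(p^{\lambda})=\Omega(\chi)(1)$, and then divide the count $N_{\lambda}=p^{\min(\lambda,n)+\min(\lambda,m)}-p^{\min(\lambda-1,n)+\min(\lambda-1,m)}$ of characters of exact order $p^{\lambda}$ by the orbit size — is exactly the bookkeeping one expects the cited proposition to contain, and your two case computations ($p^{2\lambda-2}(p-1)(p+1)/\phi(p^{\lambda})=p^{\lambda-1}(p+1)$ for $\lambda\le m$, and $p^{\lambda+m-1}(p-1)/\phi(p^{\lambda})=p^{m}$ for $m<\lambda\le n$) check out. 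The one step deserving emphasis, which you correctly flag, is that for characters of $p$-power order the field of values is the full cyclotomic field, so the orbit size equals the degree of $\Omega(\chi)$ and the division is exact; this is the same mechanism (via Lemma \ref{SC}) that the paper uses later for the nonlinear characters. What your version buys is a self-contained proof that does not depend on the earlier paper; what the paper's citation buys is brevity. There is no gap.
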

	

We now describe the irreducible rational representations of an ordinary, finite, non-cyclic metacyclic $p$-group $G$ (as defined in \eqref{prest:metacyclic}). Given that $G/G' = \langle aG', bG' \rangle \cong C_{p^{n-s}} \times C_{p^m}$, Lemma \ref{lemma:abelian} leads to two possible cases for the non-trivial irreducible rational representations of $G$ whose kernels contain $G'$. These cases are discussed in Remark \ref{remark:linear_metacyclic}.

	\begin{remark}\label{remark:linear_metacyclic}	
		\textnormal{{\bf Case 1 ($n-s \geq m$).} In this case, we have $|\Irr_{\mathbb{Q}}^{(\phi(p^\lambda))}(G)|=p^{ \lambda -1 }(p + 1)$ for each $\lambda$ with $1 \leq \lambda \leq m$, and $|\Irr_{\mathbb{Q}}^{(\phi(p^\lambda))}(G)|= p^m$ for each $\lambda$ with $m+1 \leq \lambda \leq n-s$.\\
			{\bf Case 2 ($n-s < m$).} In this case, we have $|\Irr_{\mathbb{Q}}^{(\phi(p^\lambda))}(G)|= p^{\lambda -1}(p+1)$ for each $\lambda$ with $1 \leq \lambda \leq n-s$, and $|\Irr_{\mathbb{Q}}^{(\phi(p^\lambda))}(G)|=p^{n-s}$  for each $\lambda$ with $n-s+1 \leq \lambda \leq m$.\\
			By Case 1 and Case 2, observe that $G$ has $\sum_{k=0}^{\min\{n-s, m\}}(n+m+1-s-2k)\phi(p^k)$ many inequivalent irreducible rational representations whose kernels contain $G'$.}
	\end{remark}	
	
	Next, we describe the irreducible rational representations of $G$ whose kernels do not contain $G'$. The character degrees of $G$ are given by $\cd(G) = \{p^t : 0 \leq t \leq s\}$. For a fixed $t$ ($1 \leq t \leq s$), there are $\phi(p^{n-s})p^{m-t}$ irreducible complex representations of $G$ with degree $p^t$ (see Proposition \ref{prop:metacomplexrep}), denoted by $T_{lp^{s-t}, \omega}$ (see \eqref{eq:non-linearcomplexrep}). Let $\chi$ denote the character corresponding to $T_{lp^{s-t}, \omega}$. We have 
	\begin{equation}\label{eq:charofb}
		T_{{lp^{s-t}}, \omega}(b) = \left(
		\begin{array}{ccccc}
			0 & 0 & \cdots  & 0 & \omega\\
			1 & 0 & \cdots & 0 & 0\\
			0 & 1 & \cdots & 0 & 0\\
			\vdots & \vdots & \ddots & & \vdots \\
			0 & 0 & \cdots & 1 & 0
		\end{array}\right),
	\end{equation}
	where $\omega^{p^{m-t}} = \zeta^{lp^{n+s-r-t}}$ and $\zeta$ is a fixed primitive $p^n$-th root of unity. For $1 \leq j < p^t$, by induction we can show that
	\begin{equation*}
		T_{{lp^{s-t}}, \omega}(b^j) = \left(\begin{array}{cc}
			O & \omega I_j\\
			I_{p^t-j} & O
		\end{array}\right),
	\end{equation*}
	where $I_j$ and $I_{p^t-j}$ are identity matrices of order $j$ and $p^t-j$, respectively, and $O$ is a zero matrix of appropriate size. Observe that any non-negative power of $T_{{lp^{s-t}}, \omega}(b)$ is either a diagonal matrix or has all diagonal entries equal to zero. Specifically, for any non-negative integer $\alpha$, $T_{{lp^{s-t}}, \omega}(b^{\alpha p^t}) = (T_{{lp^{s-t}}, \omega}(b))^{\alpha p^t} = \omega^\alpha I_{p^t}$, where $I_{p^t}$ is the identity matrix of order $p^t$. Thus, $\chi(b^{\alpha p^t}) = p^t\omega^\alpha$, and $\chi(b^\beta) = 0$ whenever $p^t \nmid \beta$. To determine $\chi(a^i)$ for any $i$, we first establish Lemma \ref{lemm:sumprimitveroots} and Lemma \ref{lemm:sumprimitveroots2}.
	
	\begin{lemma}\label{lemm:sumprimitveroots}
		Let $p$ be a prime, and let $\alpha, \beta$ be two non-negative integers such that $1 \leq \beta < \alpha$. Suppose $\zeta$ is a primitive $p^\alpha$-th root of unity. Then
		$$\sum_{i=0}^{p^\beta-1}\zeta^{(1+p^{\alpha-\beta})^i} = 0.$$
	\end{lemma}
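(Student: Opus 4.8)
The plan is to identify the set of exponents $\{(1+p^{\alpha-\beta})^i \bmod p^\alpha : 0 \le i \le p^\beta - 1\}$ explicitly, and then recognize the resulting sum as a complete geometric sum of $p^\beta$-th roots of unity, which vanishes.

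First I would set $x = 1 + p^{\alpha-\beta}$ and invoke Corollary \ref{cor:order} (with $n = \alpha$ and $s = \beta$, so that $n-s = \alpha-\beta$), which gives that $x$ has multiplicative order $p^\beta$ modulo $p^\alpha$. Consequently the powers $x^0, x^1, \dots, x^{p^\beta-1}$ are pairwise incongruent modulo $p^\alpha$, so they constitute exactly the cyclic subgroup $H = \langle x \rangle$ of $(\mathbb{Z}/p^\alpha\mathbb{Z})^\times$, which has order $p^\beta$.

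The key step is to pin down $H$ concretely. Consider the subset $K = \{\, r \bmod p^\alpha : r \equiv 1 \pmod{p^{\alpha-\beta}} \,\}$; one checks that $K$ is a subgroup of $(\mathbb{Z}/p^\alpha\mathbb{Z})^\times$ (its elements are all $\equiv 1 \pmod{p}$, hence units, and the congruence condition is closed under multiplication and inverses), and that its elements are precisely $1 + k\,p^{\alpha-\beta}$ for $0 \le k \le p^\beta - 1$, so $|K| = p^\beta$. Since $x \equiv 1 \pmod{p^{\alpha-\beta}}$ we have $H \subseteq K$, and comparing cardinalities forces $H = K$. Therefore, as $i$ runs over $0, \dots, p^\beta - 1$, the exponent $x^i$ runs (in some order) over the residues $1 + k\,p^{\alpha-\beta}$, $k = 0, \dots, p^\beta - 1$.

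With this in hand the computation collapses:
\[
\sum_{i=0}^{p^\beta-1}\zeta^{x^i} = \sum_{k=0}^{p^\beta-1}\zeta^{\,1 + k p^{\alpha-\beta}} = \zeta \sum_{k=0}^{p^\beta-1}\bigl(\zeta^{p^{\alpha-\beta}}\bigr)^{k}.
\]
Since $\zeta$ is a primitive $p^\alpha$-th root of unity and $1 \le \beta < \alpha$, the element $\eta := \zeta^{p^{\alpha-\beta}}$ is a primitive $p^\beta$-th root of unity, in particular $\eta \neq 1$; hence the inner geometric sum equals $\tfrac{\eta^{p^\beta}-1}{\eta - 1} = 0$, and the whole expression vanishes. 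The only genuinely substantive point is the identification $H = K$; everything after it is a one-line geometric series, and everything before it is a direct appeal to Corollary \ref{cor:order}.
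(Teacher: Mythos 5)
Your proof is correct and follows essentially the same route as the paper's: both arguments reduce to identifying the exponent set $\{(1+p^{\alpha-\beta})^i \bmod p^\alpha\}$ with $\{1+kp^{\alpha-\beta} : 0 \le k \le p^\beta-1\}$ via the order computation and a cardinality count, and then evaluating the resulting geometric sum in the primitive $p^\beta$-th root of unity $\zeta^{p^{\alpha-\beta}}$. Your packaging of the identification as the subgroup equality $H=K$ is just a cleaner phrasing of the paper's bijection argument.
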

	\begin{proof}
		Since $\alpha>\beta$, $\zeta^{p^{\alpha-\beta}}$ is a primitive $p^\beta$-th root of unity so that
		\[1+ \zeta^{p^{\alpha-\beta}} + \zeta^{2p^{\alpha-\beta}} + \cdots + \zeta^{(p^S-1)p^{\alpha-\beta}} = 0.\]
		Multiplying by $\zeta$, we have 
		\[\zeta + \zeta^{1+p^{\alpha-\beta}} + \zeta^{1+2p^{\alpha-\beta}} + \cdots + \zeta^{1+(p^\beta-1)p^{\alpha-\beta}} = 0.\]
		Further, from Lemma \ref{lemma:order}, the multiplicative order of $1+p^{\alpha-\beta}\pmod{p^\alpha}$ is $p^\beta$. Thus, $$|\{(1+p^{\alpha-\beta})^i\pmod{p^\alpha}~:~ 0\leq i\leq p^\beta-1\}|=p^\beta.$$ It follows that, for each $i$, $0 \leq i \leq p^\beta-1$, there is a unique $i{'}$,  $0 \leq i{'} \leq p^\beta-1$ such that $(1+p^{\alpha-\beta})^i \equiv (1+i{'}p^{\alpha-\beta})\pmod{p^\alpha}$. Hence, we have 
		$$\{(1+p^{\alpha-\beta})^i\pmod{p^\alpha}~:~ 0\leq i\leq p^\beta-1\}=\{(1+i{'}p^{\alpha-\beta})\pmod{p^\alpha}~:~ 0\leq i{'}\leq p^\beta-1\}.$$   	
		Therefore, we can rewrite the above expression as
		$$\zeta + \zeta^{1+p^{\alpha-\beta}} + \zeta^{(1+p^{\alpha-\beta})^2} +\cdots + \zeta^{(1+p^{\alpha-\beta})^{p^\beta-1}} = 0,$$
		and hence we get the required identity.
	\end{proof}
	
	\begin{lemma}\label{lemm:sumprimitveroots2}
			Let $p$ be a prime, and let $\alpha, \beta, \gamma$ and $\delta$ be non-negative integers such that $0\leq \delta < \beta < \alpha$ and $(\gamma, p^\alpha)= p^\delta$. Suppose $\zeta$ is a primitive $p^\alpha$-th root of unity. Then
		     $$\sum_{i=0}^{p^\beta-1}\zeta^{\gamma(1+p^{\alpha-\beta})^i} = 0.$$
	\end{lemma}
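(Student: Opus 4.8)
The plan is to reduce Lemma~\ref{lemm:sumprimitveroots2} to the already-established Lemma~\ref{lemm:sumprimitveroots} by exploiting the hypothesis $(\gamma, p^\alpha) = p^\delta$. First I would write $\gamma = p^\delta \gamma_0$ with $(\gamma_0, p) = 1$, so that $\zeta^{\gamma} = (\zeta^{p^\delta})^{\gamma_0}$. Setting $\eta := \zeta^{p^\delta}$, note that $\eta$ is a primitive $p^{\alpha - \delta}$-th root of unity, and since $(\gamma_0, p) = 1$, the element $\eta^{\gamma_0}$ is again a primitive $p^{\alpha-\delta}$-th root of unity. The goal sum then becomes $\sum_{i=0}^{p^\beta - 1} (\eta^{\gamma_0})^{(1+p^{\alpha-\beta})^i}$, which I want to recognize as an instance of the sum treated in Lemma~\ref{lemm:sumprimitveroots}, now with base field parameter $\alpha - \delta$ in place of $\alpha$.

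The technical point to verify is that the exponents $(1+p^{\alpha-\beta})^i$ behave correctly when reduced modulo $p^{\alpha-\delta}$ rather than modulo $p^\alpha$. Concretely, I would let $\xi := \eta^{\gamma_0}$, a primitive $p^{\alpha-\delta}$-th root of unity, and rewrite $\xi^{(1+p^{\alpha-\beta})^i} = \xi^{(1+p^{(\alpha-\delta)-(\beta-\delta)})^i}$, using the identity $\alpha - \beta = (\alpha - \delta) - (\beta - \delta)$. Since $0 \le \delta < \beta < \alpha$, the new parameters $\alpha' := \alpha - \delta$ and $\beta' := \beta - \delta$ satisfy $1 \le \beta' < \alpha'$, which is exactly the hypothesis range of Lemma~\ref{lemm:sumprimitveroots}. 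Applying that lemma to the primitive $p^{\alpha'}$-th root of unity $\xi$ with exponent parameter $\beta'$ yields $\sum_{i=0}^{p^{\beta'}-1} \xi^{(1+p^{\alpha'-\beta'})^i} = 0$.

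The one genuine obstacle is the mismatch in the \emph{number} of summands: Lemma~\ref{lemm:sumprimitveroots} produces a sum over $i$ from $0$ to $p^{\beta'}-1 = p^{\beta-\delta}-1$, whereas the statement to be proved runs over $i$ from $0$ to $p^\beta - 1$. I would resolve this by observing that, modulo $p^{\alpha-\delta}$, the multiplicative order of $1 + p^{\alpha-\beta} = 1 + p^{\alpha'-\beta'}$ is $p^{\beta'} = p^{\beta-\delta}$ by Lemma~\ref{lemma:order}, so the exponents $(1+p^{\alpha-\beta})^i \pmod{p^{\alpha-\delta}}$ cycle with period $p^{\beta-\delta}$. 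Hence the full sum over $0 \le i \le p^\beta - 1$ simply repeats the length-$p^{\beta-\delta}$ block a total of $p^\delta$ times, giving
\[
\sum_{i=0}^{p^\beta - 1} \zeta^{\gamma(1+p^{\alpha-\beta})^i} = p^\delta \sum_{i=0}^{p^{\beta-\delta}-1} \xi^{(1+p^{\alpha'-\beta'})^i} = p^\delta \cdot 0 = 0,
\]
which is the desired identity. I expect the bookkeeping around this periodicity argument to be the main thing requiring care; everything else is a routine change of variables.
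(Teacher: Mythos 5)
Your proposal is correct and follows essentially the same route as the paper: both reduce to Lemma \ref{lemm:sumprimitveroots} with parameters $\alpha-\delta$ and $\beta-\delta$ after observing that $\zeta^\gamma$ is a primitive $p^{\alpha-\delta}$-th root of unity, and both handle the mismatch in the number of summands via the periodicity of $(1+p^{\alpha-\beta})^i$ modulo $p^{\alpha-\delta}$ (order $p^{\beta-\delta}$, by Lemma \ref{lemma:order}), yielding the factor $p^\delta$. Your write-up is, if anything, slightly more explicit than the paper's in factoring $\gamma = p^\delta\gamma_0$ and verifying the hypothesis $1 \le \beta' < \alpha'$.
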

	\begin{proof}
		Since $(\gamma, p^\alpha)= p^\delta$, $\zeta^\gamma$ is a primitive $p^{\alpha-\delta}$-th root of unity. Therefore, by substituting $\alpha-\delta$ and $\beta-\delta$ for $\alpha$ and $\beta$, respectively in Lemma \ref{lemm:sumprimitveroots}, we get
		$$\sum_{i=0}^{p^{\beta-\delta}-1}\zeta^{\gamma(1+p^{\alpha-\beta})^i} = 0.$$
		Further, from Lemma \ref{lemma:order}, the multiplicative order of $1+p^{\alpha-\beta}\pmod{p^{\alpha-\delta}}$ is $p^{\beta-\delta}$. Thus, for $0 \leq i \leq p^\beta-1$ and $0 \leq j \leq p^{\beta-\delta}-1$ such that $i \equiv j \pmod{p^{\beta-\delta}}$, we have
		$(1+p^{\alpha-\beta})^i \equiv (1+p^{\alpha-\beta})^j\pmod{p^{\alpha-\delta}}$. It follows that $\zeta^{\gamma(1+p^{\alpha-\beta})^i}=\zeta^{\gamma(1+p^{\alpha-\beta})^j}$ and hence we get 
		$$\sum_{i=0}^{p^\beta-1}\zeta^{\gamma(1+p^{\alpha-\gamma})^i} = p^\delta\sum_{i=0}^{p^{\beta-\delta}-1}\zeta^{\gamma(1+p^{\alpha-\beta})^i} = 0.$$
	\end{proof}
	
Now, we are ready to determine the value of $\chi(a^i)$ for any $i$, where $\chi$ is the character afforded by the representation $T_{lp^{s-t}, \omega}$ defined in \eqref{eq:non-linearcomplexrep}. Here, we have
	\begin{equation}\label{eq:charofa}
		T_{{lp^{s-t}}, \omega}(a^k) = \left(
		\begin{array}{ccccc}
			\zeta^{klp^{s-t}} & 0 & 0 & \cdots & 0\\
			0 & \zeta^{k(1+p^{n-s})lp^{s-t}} & 0 & \cdots & 0\\
			0 & 0 & \zeta^{k(1+p^{n-s})^2lp^{s-t}} & \cdots & 0\\
			\vdots & \vdots & \vdots & \ddots & \vdots\\
			0 & 0 & 0 & \cdots & \zeta^{k(1+p^{n-s})^{p^t-1}lp^{s-t}}
		\end{array}\right),
	\end{equation}
	where $\zeta$ is a primitive $p^n$-th root of unity. Observe that $T_{{lp^{s-t}}, \omega}(a^{p^t}) = \zeta^{lp^s}I_{p^t}$, where $I_{p^t}$ is the identity matrix of order $p^t$. Hence, $\chi(a^{p^t}) = p^t\zeta^{lp^s}$. Furthermore, $\zeta^{lp^{s-t}}$ is a primitive $p^{n-s+t}$-th root of unity. By substituting $\alpha = n-s+t$ and $\beta = t$ in Lemma \ref{lemm:sumprimitveroots2}, we get
	$$\chi(a^k) = \sum_{i=0}^{p^t-1}\zeta^{k(1+p^{n-s})^ilp^{s-t}} = 0$$
	for all $0 \leq k \leq p^n-1$ such that $p^t \nmid k$. Thus, we have
	$$\chi(a^i) = \begin{cases}
		p^t\zeta^{ilp^{s-t}} &\quad \text{if } p^t\mid i,\\
		0            &\quad \text{otherwise.}
	\end{cases}$$
	Therefore, the character values $\chi(a^ib^j)$ for all elements of $G$ are given by
	\begin{equation}\label{eq:chatractervalue}
		\chi(a^ib^j) = \begin{cases}
			p^t\omega^{j_1}\zeta^{ilp^{s-t}} &\quad \text{if } p^t \mid i \text{ and } j = j_1p^t,\\
			0            &\quad \text{otherwise,}
		\end{cases}
	\end{equation}
		where $\zeta$ is any fixed primitive $p^n$-th root of unity and $\omega^{p^{m-t}}=\zeta^{lp^{n+s-r-t}}$.
	
	Now, we define an equivalence relation on $\Irr(G)$ based on Galois conjugacy over $\mathbb{Q}$. For $\chi, \psi \in \Irr(G)$, we say that $\chi$ and $\psi$ are Galois conjugates over $\mathbb{Q}$ if $\mathbb{Q}(\chi) = \mathbb{Q}(\psi)$ and there exists $\sigma \in \gal(\mathbb{Q}(\chi) / \mathbb{Q})$ such that $\chi^\sigma = \psi$.
	
	\begin{lemma}\cite[Lemma 9.17]{I}\label{SC}
		Let $E(\chi)$ denote the Galois conjugacy class over $\mathbb{Q}$ of a complex irreducible character $\chi$. Then 
		\[|E(\chi)| = [\mathbb{Q}(\chi) : \mathbb{Q}]. \]
	\end{lemma}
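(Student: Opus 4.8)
The plan is to realize $E(\chi)$ as the orbit of $\chi$ under a Galois action and then count it via the orbit–stabilizer principle together with the fundamental theorem of Galois theory. Let $e$ be the exponent of $G$ and fix a primitive $e$-th root of unity $\zeta_e$; since the eigenvalues of any representation of $G$ are $e$-th roots of unity, every value $\chi(g)$ lies in the cyclotomic field $\mathbb{Q}(\zeta_e)$, and hence $\mathbb{Q}(\chi) \subseteq \mathbb{Q}(\zeta_e)$. The extension $\mathbb{Q}(\zeta_e)/\mathbb{Q}$ is abelian, so every intermediate field --- in particular $\mathbb{Q}(\chi)$ --- is itself Galois over $\mathbb{Q}$, which gives $|\gal(\mathbb{Q}(\chi)/\mathbb{Q})| = [\mathbb{Q}(\chi):\mathbb{Q}]$.

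First I would record that the Galois action is well defined on $\Irr(G)$: for $\sigma \in \gal(\mathbb{Q}(\zeta_e)/\mathbb{Q})$ with $\sigma(\zeta_e) = \zeta_e^k$, applying $\sigma$ entrywise to a matrix representation affording $\chi$ (realized over $\mathbb{Q}(\zeta_e)$) yields another representation affording the class function $\chi^\sigma(g) = \sigma(\chi(g)) = \chi(g^k)$, and this representation is again irreducible because $\sigma$ is a field automorphism and so preserves the inner product, $\langle \chi^\sigma, \chi^\sigma\rangle = \langle \chi,\chi\rangle = 1$. Thus $\chi^\sigma \in \Irr(G)$, and since each $\chi(g) \in \mathbb{Q}(\chi)$ the formula $\chi^\sigma(g) = \sigma(\chi(g))$ already makes sense for $\sigma \in \gal(\mathbb{Q}(\chi)/\mathbb{Q})$ (any such $\sigma$ extends to an automorphism of $\mathbb{Q}(\zeta_e)$ because the larger extension is normal). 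This identifies $E(\chi) = \{\chi^\sigma : \sigma \in \gal(\mathbb{Q}(\chi)/\mathbb{Q})\}$ with the orbit of $\chi$.

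The counting step is then direct. Consider the surjection $\gal(\mathbb{Q}(\chi)/\mathbb{Q}) \to E(\chi)$, $\sigma \mapsto \chi^\sigma$. For $\sigma, \tau \in \gal(\mathbb{Q}(\chi)/\mathbb{Q})$ we have $\chi^\sigma = \chi^\tau$ if and only if $\tau^{-1}\sigma$ fixes every value $\chi(g)$, i.e. fixes $\mathbb{Q}(\chi)$ pointwise, which forces $\tau^{-1}\sigma = \mathrm{id}$; hence the map is injective, and therefore bijective. Combining with $|\gal(\mathbb{Q}(\chi)/\mathbb{Q})| = [\mathbb{Q}(\chi):\mathbb{Q}]$ gives $|E(\chi)| = [\mathbb{Q}(\chi):\mathbb{Q}]$. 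Equivalently, letting $\Gamma = \gal(\mathbb{Q}(\zeta_e)/\mathbb{Q})$ act on $\chi$, the stabilizer of $\chi$ is exactly $\gal(\mathbb{Q}(\zeta_e)/\mathbb{Q}(\chi))$, and orbit--stabilizer together with the tower law yields $|E(\chi)| = [\Gamma : \gal(\mathbb{Q}(\zeta_e)/\mathbb{Q}(\chi))] = [\mathbb{Q}(\chi):\mathbb{Q}]$.

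The only genuinely non-formal point --- and the step I would treat most carefully --- is the claim that $\chi^\sigma$ remains an irreducible character rather than merely a Galois-twisted class function; this is where one must pass from characters to an actual representation over $\mathbb{Q}(\zeta_e)$ and invoke invariance of the Schur inner product under $\sigma$. Everything after that is the orbit--stabilizer count, which is purely Galois-theoretic and routine.
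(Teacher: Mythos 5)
Your proof is correct and is essentially the standard argument: the paper does not prove this statement at all but simply quotes it as \cite[Lemma 9.17]{I}, and your identification of $E(\chi)$ with the orbit of $\chi$ under $\gal(\mathbb{Q}(\chi)/\mathbb{Q})$ together with the injectivity of $\sigma \mapsto \chi^{\sigma}$ (since the values $\chi(g)$ generate $\mathbb{Q}(\chi)$) is the same argument found there. The only point worth polishing is that realizing the representation over $\mathbb{Q}(\zeta_e)$ itself invokes Brauer's splitting-field theorem; this is avoidable by realizing it over any finite Galois extension of $\mathbb{Q}$ containing the matrix entries and extending $\sigma$ to that field, after which invariance of the Schur inner product goes through unchanged.
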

	
	For $1 \leq t \leq s-r$, $\omega$ in \eqref{eq:non-linearcomplexrep} is a primitive $p^\lambda$-th root of unity, where $0 \leq \lambda \leq m-t$. Hence,
	$$\gal(\mathbb{Q}(\chi) / \mathbb{Q}) = \gal(\mathbb{Q}(\eta) / \mathbb{Q}),$$
	where $\eta$ is a primitive $p^{n-s}$-th root of unity if $n-s \geq \lambda$, and a primitive $p^\lambda$-th root of unity if $n-s < \lambda$. Furthermore, when $r \neq 0$ and $s-r < t \leq s$, $\omega$ is a primitive $p^{m+r-s}$-th root of unity in \eqref{eq:non-linearcomplexrep}. Based on the above discussion and Lemma \ref{SC}, we summarize the counting of Galois conjugacy classes of those irreducible complex characters of $G$ whose kernels do not contain $G'$.
	
	\begin{remark}\label{remark:galoisclasses_metacyclic} 
		\textnormal{{\bf Case 1.} For a fixed $t$ ($1\leq t \leq s-r$), we have one of the following two sub-cases for Galois conjugacy classes of irreducible complex characters of degree $p^t$ of $G$.\\
			{\bf Sub-Case 1 ($n-s \geq m-t$).} There are $\sum_{\lambda = 0}^{m-t}\phi(p^\lambda) = p^{m-t}$ many distinct Galois conjugacy classes of size $\phi(p^{n-s})$.\\
			{\bf Sub-Case 2 ($n-s < m-t$).} There are $\sum_{\lambda = 0}^{n-s}\phi(p^\lambda) = p^{n-s}$ many distinct Galois conjugacy classes of size $\phi(p^{n-s})$ and $\phi(p^{n-s})$ many distinct Galois conjugacy classes of size $\phi(p^\lambda)$ for $n-s < \lambda \leq m-t$.\\
			{\bf Case 2.} As either $r=0$ or $n-m+1 \leq r$ (see \eqref{prest:metacyclic}), for a fixed $t$ where $s-r+1 \leq t \leq s$, there are $\frac{\phi(p^{n-s})p^{m-t}}{\phi(p^{m+r-s})}=p^{n-r-t}$  many distinct Galois conjugacy classes of size $\phi(p^{m+r-s})$ of irreducible complex characters of degree $p^t$ of $G$.}
	\end{remark}
	
	In Lemma \ref{lemma:schurindex}, we calculate the Schur indices of irreducible complex characters over $\mathbb{Q}$ for ordinary metacyclic $p$-groups.
	
	\begin{lemma}\cite[Corollary 10.14]{I}\label{lemma:schur}
		Let $G$ be a $p$-group and $\chi \in \Irr(G)$. For any field $\mathbb{F} \subseteq \mathbb{C}$, the Schur index $m_\mathbb{F}(\chi)$ equals 1 unless $p = 2$ and $\sqrt{-1} \notin \mathbb{F}$, in which case $m_\mathbb{F}(\chi) \leq 2$.
	\end{lemma}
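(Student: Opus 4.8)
The plan is to read $m_{\mathbb{F}}(\chi)$ off the Brauer group and reduce everything to the classical theorem of Roquette over the character field $\mathbb{Q}(\chi)$. First I would recall that the Wedderburn component of $\mathbb{F}(\chi)G$ afforded by $\chi$ is isomorphic to $M_r(D)$ for a central division $\mathbb{F}(\chi)$-algebra $D$, and that $m_{\mathbb{F}}(\chi)$ is exactly the degree $\sqrt{\dim_{\mathbb{F}(\chi)}D}$, i.e. the index of the Brauer class $[D]\in\mathrm{Br}(\mathbb{F}(\chi))$. If $A$ denotes the $\chi$-component of $\mathbb{Q}(\chi)G$, then the $\chi$-component of $\mathbb{F}(\chi)G$ is $A\otimes_{\mathbb{Q}(\chi)}\mathbb{F}(\chi)$, and since the index of a Brauer class can only divide under extension of scalars, one gets $m_{\mathbb{F}}(\chi)\mid m_{\mathbb{Q}}(\chi)$. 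Thus it suffices to (i) bound $m_{\mathbb{Q}}(\chi)$ and describe $A$ explicitly, then (ii) track what adjoining $\sqrt{-1}$ does to it. Because $G$ is a $p$-group it is monomial, so $\chi$ is induced from a linear character and its values lie in $\mathbb{Q}(\zeta_{p^a})$ for some $a$; by the Brauer--Witt theorem \cite{Yam}, $A$ is Brauer-equivalent to a cyclotomic algebra whose class is pinned down by its Hasse invariants at the places of $\mathbb{Q}(\chi)$.

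The heart of the argument, which I would carry out next, is the local computation of these invariants (this is precisely Roquette's theorem). For odd $p$ the Galois group $\gal(\mathbb{Q}(\zeta_{p^a})/\mathbb{Q})$ is cyclic, and I would check that every local invariant of the resulting cyclic cyclotomic algebra vanishes, so that $[A]=0$ and $m_{\mathbb{Q}}(\chi)=1$. For $p=2$ the group $\gal(\mathbb{Q}(\zeta_{2^a})/\mathbb{Q})$ is bicyclic once $a\geq 3$, and it is exactly this failure of cyclicity that permits a nontrivial $2$-torsion obstruction. A place-by-place analysis should show that the only places of $\mathbb{Q}(\chi)$ at which the invariant can be nonzero are the real archimedean places and the places above $2$, each with invariant $\tfrac12$; hence $A$ is Brauer-equivalent to a quaternion algebra, which one identifies with the symbol algebra $\left(\frac{-1,-1}{\mathbb{Q}(\chi)}\right)$ (in particular $\mathbb{Q}(\chi)$ is totally real whenever $m_{\mathbb{Q}}(\chi)=2$). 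This gives $m_{\mathbb{Q}}(\chi)\leq 2$ for $p=2$.

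Finally I would deduce the dependence on $\mathbb{F}$ and the role of $\sqrt{-1}$. For odd $p$ we have $m_{\mathbb{F}}(\chi)\mid m_{\mathbb{Q}}(\chi)=1$, so $m_{\mathbb{F}}(\chi)=1$ for every $\mathbb{F}\subseteq\mathbb{C}$. For $p=2$ the extended class is $[A]\otimes\mathbb{F}(\chi)\sim\left(\frac{-1,-1}{\mathbb{F}(\chi)}\right)$, and the symbol identity settles the matter over an arbitrary subfield of $\mathbb{C}$: if $\sqrt{-1}\in\mathbb{F}(\chi)$ then $\mathbb{F}(\chi)(\sqrt{-1})=\mathbb{F}(\chi)$, so $-1$ is trivially a norm from this extension and the quaternion algebra splits. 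Consequently, when $\sqrt{-1}\in\mathbb{F}$ (hence a fortiori $\sqrt{-1}\in\mathbb{F}(\chi)$) we obtain $m_{\mathbb{F}}(\chi)=1$, while when $\sqrt{-1}\notin\mathbb{F}$ we still have $m_{\mathbb{F}}(\chi)\mid 2$, which is the assertion. The main obstacle is step (i), Roquette's theorem itself: verifying that all Hasse invariants vanish in the odd case, and that for $p=2$ the dyadic and real invariants are the only survivors and both equal $\tfrac12$, so that the division algebra is exactly the symbol $\left(\frac{-1,-1}{\mathbb{Q}(\chi)}\right)$. By contrast, the passage to a general ground field $\mathbb{F}\subseteq\mathbb{C}$ in step (ii) is a formal consequence of this explicit symbol description.
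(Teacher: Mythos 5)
This lemma is not proved in the paper at all: it is quoted verbatim as \cite[Corollary 10.14]{I} (Roquette's theorem), whose proof in Isaacs is an elementary induction on $|G|$ using normal abelian non-cyclic subgroups and character triples, with no class field theory. Your proposal takes a genuinely different, global-to-local route through the Brauer group, and its scaffolding is sound: the identification of $m_{\mathbb{F}}(\chi)$ with the index of the $\chi$-component of $\mathbb{F}(\chi)G$, the divisibility $m_{\mathbb{F}}(\chi)\mid m_{\mathbb{Q}}(\chi)$ from extension of scalars, and the final splitting of $\left(\frac{-1,-1}{\mathbb{F}(\chi)}\right)$ when $\sqrt{-1}\in\mathbb{F}(\chi)$ are all correct, and together they do reduce the lemma to the description of the $\chi$-component of $\mathbb{Q}(\chi)G$.

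The genuine gap is that this description --- your step (i) --- is precisely where all the content lives, and you assert it rather than prove it (``I would check that every local invariant \ldots vanishes'', ``a place-by-place analysis should show''). Brauer--Witt only tells you the component is equivalent to \emph{some} cyclotomic algebra over $\mathbb{Q}(\chi)$; it does not compute its invariants, and nothing in your write-up uses the hypothesis that $G$ is a $p$-group to force them to be trivial (odd $p$) or to equal $\tfrac12$ exactly at the real and dyadic places ($p=2$). For odd $p$ the missing argument is, for instance: the component is equivalent to a cyclic algebra $(\zeta^{j},\,\mathbb{Q}(\zeta_{p^{a}})/\mathbb{Q}(\chi),\,\sigma)$ with $\zeta^{j}$ a $p$-power root of unity, and such roots of unity are norms in the cyclotomic $p$-extension $\mathbb{Q}(\zeta_{p^{a}})/\mathbb{Q}(\chi)$ (the norm of $\zeta_{p^{a}}$ down to $\mathbb{Q}(\zeta_{p^{b}})$ is $\zeta_{p^{b}}$), so the class is trivial; for $p=2$ one must actually produce the $2$-torsion obstruction and identify it with $\left(\frac{-1,-1}{\mathbb{Q}(\chi)}\right)$, which again requires a concrete cocycle computation or an induction on the group. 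Until that is supplied, the proposal establishes only the formal reductions, not the theorem; note also that for the bound $m_{\mathbb{F}}(\chi)\le 2$ alone, the explicit symbol description is more than is needed --- exponent--index considerations for quaternion classes over number fields already suffice once one knows the class is $2$-torsion.
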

	
	\begin{lemma}\label{lemma:schurindex}
		Let $G$ be an ordinary metacyclic $p$-group and $\chi \in \Irr(G)$. Then $m_\mathbb{Q}(\chi) = 1$.
	\end{lemma}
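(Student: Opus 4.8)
The plan is to separate the argument according to the parity of $p$ and, for $p=2$, to reduce the computation of $m_{\mathbb{Q}}(\chi)$ to a field-membership question that is settled by the defining inequalities of an ordinary presentation. When $p$ is odd, Lemma \ref{lemma:schur} applies verbatim: its only exceptional clause requires $p=2$, so $m_{\mathbb{Q}}(\chi)=1$ for every $\chi\in\Irr(G)$ and there is nothing more to prove. Thus the entire content lies in the case $p=2$, where Lemma \ref{lemma:schur} a priori only yields $m_{\mathbb{Q}}(\chi)\le 2$, and the task is to exclude the value $2$.

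For $p=2$ I would first note that the Schur index depends only on the field $\mathbb{F}(\chi)$ generated by $\mathbb{F}$ and the character values; in particular $m_{\mathbb{Q}}(\chi)=m_{\mathbb{Q}(\chi)}(\chi)$, since $\mathbb{Q}(\chi)(\chi)=\mathbb{Q}(\chi)$. Consequently it suffices to prove that $\sqrt{-1}\in\mathbb{Q}(\chi)$: once this is shown, Lemma \ref{lemma:schur} applied with the base field $\mathbb{F}=\mathbb{Q}(\chi)$ (which then contains $\sqrt{-1}$) forces $m_{\mathbb{Q}(\chi)}(\chi)=1$, hence $m_{\mathbb{Q}}(\chi)=1$. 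Linear characters are disposed of immediately, since a degree-one character is afforded by itself over $\mathbb{Q}(\chi)$ and therefore has Schur index $1$; so I may assume $\chi$ is the nonlinear character attached to $T_{l2^{s-t},\omega}$ of \eqref{eq:non-linearcomplexrep} for some $1\le t\le s$, in particular with $s\ge 1$.

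The field membership is read off from the explicit character values. From \eqref{eq:charofa} one has $\chi(a^{2^t})=2^t\zeta^{l2^s}$, and since $\zeta$ is a primitive $2^n$-th root of unity with $l$ odd, $\zeta^{l2^s}$ is a primitive $2^{n-s}$-th root of unity. Dividing by the rational scalar $2^t$ shows $\zeta_{2^{n-s}}\in\mathbb{Q}(\chi)$, so that $\mathbb{Q}(\zeta_{2^{n-s}})\subseteq\mathbb{Q}(\chi)$. The decisive input is now the defining constraint of an \emph{ordinary} presentation \eqref{prest:metacyclic} for $p=2$, namely $s<n-1$, which gives $n-s\ge 2$. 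Hence $2^2\mid 2^{n-s}$ and $\zeta_4=\sqrt{-1}\in\mathbb{Q}(\zeta_{2^{n-s}})\subseteq\mathbb{Q}(\chi)$, completing the reduction and therefore the proof.

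The step I expect to be most delicate is not any computation but the precise role of the hypothesis \emph{ordinary}: it is exactly the inequality $s<n-1$ that rules out the quaternionic behaviour responsible for Schur index $2$ in the excluded (exceptional) $2$-groups such as the generalized quaternion groups, where $n-s=1$ leaves $\mathbb{Q}(\chi)$ real and $\sqrt{-1}\notin\mathbb{Q}(\chi)$. I would therefore take care to confirm that $n-s\ge 2$ holds whenever a nonlinear character exists, and to dispatch the degenerate small-$n$ cases (where $G$ is abelian or carries only linear characters) separately, so that the identity $m_{\mathbb{Q}}(\chi)=m_{\mathbb{Q}(\chi)}(\chi)$ together with the single application of Lemma \ref{lemma:schur} covers all remaining $\chi$.
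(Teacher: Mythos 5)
Your proposal is correct and follows essentially the same route as the paper: odd $p$ is handled by Lemma \ref{lemma:schur} alone, and for $p=2$ one shows $\sqrt{-1}\in\mathbb{Q}(\chi)$ (the paper cites the inequality $n-s\ge 2$ and the character values \eqref{eq:chatractervalue}, exactly your argument via $\chi(a^{2^t})=2^t\zeta^{l2^s}$) and then concludes via $m_{\mathbb{Q}}(\chi)=m_{\mathbb{Q}(\chi)}(\chi)$ and a second application of Lemma \ref{lemma:schur}. Your version is slightly more explicit about why $\zeta_{2^{n-s}}\in\mathbb{Q}(\chi)$ and about the role of the ordinariness condition $s<n-1$, but there is no substantive difference.
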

	
	\begin{proof}
		If $p$ is odd, the result follows from Lemma \ref{lemma:schur}. Next, consider an ordinary metacyclic $2$-group $G$ with a unique reduced presentation (as given in \eqref{prest:metacyclic}), and let $\chi \in \Irr(G)$. Since $n - s \geq 2$ and $m + r - s \geq 2$, we have $\sqrt{-1} \in \mathbb{Q}(\chi)$ (see \eqref{eq:chatractervalue}). By Lemma \ref{lemma:schur}, this implies that $m_{\mathbb{Q}(\chi)}(\chi) = 1$. Moreover, $m_\mathbb{Q}(\chi) = m_{\mathbb{Q}(\chi)}(\chi)$ (see \cite[Corollary 10.2]{I}), so $m_\mathbb{Q}(\chi) = 1$. This completes the proof of Lemma \ref{lemma:schurindex}.
	\end{proof}
	
	Additionally, for an ordinary metacyclic $p$-group $G$, Schur's theory guarantees the existence of a unique irreducible $\mathbb{Q}$-representation $\rho$ of $G$ such that
	$$\rho = \bigoplus_{\sigma \in \gal(\mathbb{Q}(\chi) / \mathbb{Q})} (T_{lp^{s-t}, \omega})^\sigma$$
	with $\deg \rho = \chi(1) |\gal(\mathbb{Q}(\chi) / \mathbb{Q})|$. Thus, the number of irreducible rational representations of $G$ equals the number of Galois conjugacy classes in $\Irr(G)$. Based on Remark \ref{remark:linear_metacyclic} and Remark \ref{remark:galoisclasses_metacyclic}, Theorem \ref{thm:rationalrep} provides the count of irreducible rational representations of an ordinary metacyclic $p$-group.
	
	\begin{theorem}\label{thm:rationalrep}
		Let $p$ be a prime. Consider an ordinary, finite, non-cyclic metacyclic $p$-group $G$ with a unique reduced presentation:
		\begin{equation*}
			G = \langle a, b ~ : ~ a^{p^n} = 1, \, b^{p^m} = a^{p^{n-r}}, \, bab^{-1} = a^{1+p^{n-s}} \rangle,
		\end{equation*}
		for certain integers $m, n, r$ and $s$ (as defined in  \eqref{prest:metacyclic}). Then we have the following two cases, which determine the counting of non-trivial irreducible rational representations of $G$ of distinct degrees.
		\begin{enumerate}
			\item {\bf Case ($n-s \geq m$).} In this case, we have
			\begin{itemize}
			\item 	$\cd_\mathbb{Q}(G)=\{\phi(p^\theta) : 0 \leq \theta \leq n\}$,	
			\item $|\Irr_{\mathbb{Q}}^{(\phi(p^\lambda))}(G)|= p^{\lambda - 1}(p+1)$ for each $\lambda$ with $1 \leq \lambda \leq m$,
			\item $|\Irr_{\mathbb{Q}}^{(\phi(p^\lambda))}(G)|=p^m$ for each $\lambda$ with $m+1\leq \lambda \leq n-s$,
			\item $|\Irr_{\mathbb{Q}}^{(\phi(p^\lambda))}(G)|=p^{m-t}$ for each $\lambda = n-s+t$ with $1 \leq t \leq s$.
			\end{itemize}		

			\item {\bf Case ($n-s < m$).} Suppose $m = (n-s)+k$. In this case, we have two sub-cases.
			\begin{enumerate}
				\item {\bf Sub-case ($k \leq s-r$).} In this sub-case, we have
				\begin{itemize}
				\item 	$\cd_\mathbb{Q}(G)=\{\phi(p^\theta) : 0 \leq \theta \leq n\}$,	
				\item $|\Irr_{\mathbb{Q}}^{(\phi(p^\lambda))}(G)|=p^{\lambda - 1}(p+1)$ for each $\lambda$ with $1 \leq \lambda \leq n-s$,
				\item $|\Irr_{\mathbb{Q}}^{(\phi(p^\lambda))}(G)|=2p^{n-s} + (t-1)\phi(p^{n-s})$ for each $\lambda = n-s+t$ with $1 \leq t \leq k$,
				\item  $|\Irr_{\mathbb{Q}}^{(\phi(p^\lambda))}(G)|=p^{m-t}$ for each $\lambda = n-s+t$ with $k+1 \leq t \leq s$.
				\end{itemize}

				\item {\bf Sub-case ($k > s-r$).} In this sub-case, we have 
				\begin{itemize}
				\item 	$\cd_\mathbb{Q}(G)=\{\phi(p^\theta) : 0 \leq \theta \leq m+r\}$,	
				\item $|\Irr_{\mathbb{Q}}^{(\phi(p^\lambda))}(G)|=p^{\lambda - 1}(p+1)$ for each $\lambda$ with $1 \leq \lambda \leq n-s$,
				\item $|\Irr_{\mathbb{Q}}^{(\phi(p^\lambda))}(G)|=2p^{n-s} + (t-1)\phi(p^{n-s})$ for each $\lambda = n-s+t$ with $1 \leq t \leq s-r$,
				\item $|\Irr_{\mathbb{Q}}^{(\phi(p^\lambda))}(G)|=p^{n-s}+(s-r)\phi(p^{n-s})$ for each $\lambda$ with $n-r+1 \leq \lambda \leq m$, 
				\item  $|\Irr_{\mathbb{Q}}^{(\phi(p^\lambda))}(G)|=p^{n-r-t}$ for each $\lambda = m+r-s+t$ with $s-r+1 \leq t \leq s$.
				\end{itemize}

			\end{enumerate}
		\end{enumerate}
	\end{theorem}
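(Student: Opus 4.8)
The plan is to count irreducible rational representations of $G$ by counting Galois conjugacy classes of irreducible complex characters. By Lemma \ref{lemma:schurindex} the Schur index $m_{\mathbb{Q}}(\chi)=1$ for every $\chi\in\Irr(G)$, so, as recorded in the discussion preceding the theorem, each Galois conjugacy class $E(\chi)$ gives rise to exactly one irreducible $\mathbb{Q}$-representation, of degree $\chi(1)\,|E(\chi)|=\chi(1)\,[\mathbb{Q}(\chi):\mathbb{Q}]$ (using Lemma \ref{SC}). Thus it suffices to distribute the Galois conjugacy classes of $\Irr(G)$ according to this degree. I would split $\Irr(G)$ into the characters whose kernel contains $G'$ and those whose kernel does not, and treat the two families separately.

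For the characters with $G'\subseteq\ker\chi$, these are inflated from $G/G'\cong C_{p^{n-s}}\times C_{p^m}$, and a character of order $p^\lambda$ on this abelian quotient yields a rational representation of degree $\phi(p^\lambda)$. The counts of such classes by $\lambda$ are exactly those of Lemma \ref{lemma:abelian}, reorganized in Remark \ref{remark:linear_metacyclic} into the two regimes $n-s\ge m$ and $n-s<m$; I would simply import these. For the characters with $G'\not\subseteq\ker\chi$, which have complex degree $p^t$ ($1\le t\le s$), the field $\mathbb{Q}(\chi)$ is read off from the character values \eqref{eq:chatractervalue}, and the sizes and numbers of their Galois conjugacy classes are precisely those tabulated in Remark \ref{remark:galoisclasses_metacyclic}. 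The degree of the associated rational representation is then $p^t$ times the class size, and the single arithmetic identity $p^t\,\phi(p^j)=\phi(p^{t+j})$ (valid for $j\ge 1$) converts every such product into the normalized form $\phi(p^\lambda)$.

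With these ingredients the proof reduces to bookkeeping: for each target degree $\phi(p^\lambda)$, I would collect all contributions whose indices sum to $\lambda$ --- the linear classes landing at $\phi(p^\lambda)$, the Sub-Case 1 non-linear classes (size $\phi(p^{n-s})$, contributing at $\lambda=n-s+t$), both strands of the Sub-Case 2 non-linear classes (the $p^{n-s}$ classes at $\lambda=n-s+t$ and the $\phi(p^{n-s})$ classes per $\lambda'$ at $\lambda=\lambda'+t$), and the Case 2 non-linear classes (size $\phi(p^{m+r-s})$, contributing at $\lambda=m+r-s+t$) --- and add them. Carrying this out in the regime $n-s\ge m$ is short, since the presentation constraints \eqref{prest:metacyclic} force $r=0$ there and only Sub-Case 1 non-linear classes occur. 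For $n-s<m$ I would set $m=(n-s)+k$ and separate $k\le s-r$ from $k>s-r$; here one should first observe which sub-ranges of $t$ and $t'$ are actually populated (for instance $k\le s-r$ again forces the split case $r=0$, while in $k>s-r$ the top degree rises to $\phi(p^{m+r})$), and then sum the overlapping contributions.

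The main obstacle is precisely this last combinatorial reconciliation in the case $n-s<m$: a single degree $\phi(p^\lambda)$ generally receives contributions from several distinct complex degrees $p^{t'}$ and from all four sources above, over ranges of $t'$ that depend on the inequalities between $k$, $s-r$ and $\lambda-(n-s)$. The delicate points are verifying that the Sub-Case 2 ``extra'' classes contribute only for $\lambda\le m$ (which pins down exactly when the term $(t-1)\phi(p^{n-s})$ or $(s-r)\phi(p^{n-s})$ appears) and checking the boundary degrees, so that the totals collapse to the clean expressions $2p^{n-s}+(t-1)\phi(p^{n-s})$, $p^{n-s}+(s-r)\phi(p^{n-s})$ and $p^{n-r-t}$ stated in the theorem, and that the resulting set of degrees is exactly $\{\phi(p^\theta)\}$ over the claimed range of $\theta$.
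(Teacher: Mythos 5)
Your proposal is correct and follows essentially the same route as the paper: split $\Irr(G)$ according to whether the kernel contains $G'$, import the counts from Lemma \ref{lemma:abelian} (via Remark \ref{remark:linear_metacyclic}) and the Galois conjugacy class data of Remark \ref{remark:galoisclasses_metacyclic}, use $m_{\mathbb{Q}}(\chi)=1$ (Lemma \ref{lemma:schurindex}) together with Lemma \ref{SC} to convert each class into one rational irreducible of degree $\chi(1)[\mathbb{Q}(\chi):\mathbb{Q}]$, and then regroup by the normalized degree $\phi(p^{\lambda})$. The paper performs the final reconciliation you flag as the main obstacle by explicitly tabulating, for each $t$, the degrees and multiplicities of the resulting rational representations (Tables \ref{t:1} and \ref{t:2}) and reading off the stated totals.
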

	\begin{proof}Let $G$ be an ordinary, finite, non-cyclic metacyclic $p$-group with a unique reduced presentation given in \eqref{prest:metacyclic}.
		\begin{enumerate}
			\item When $n - s \geq m$, as described in Case 1 of Remark \ref{remark:linear_metacyclic}, $G$ possesses $p^{\lambda - 1}(p + 1)$ inequivalent non-trivial irreducible rational representations of degree $\phi(p^\lambda)$ for $1 \leq \lambda \leq m$. Additionally, for each $\lambda$ with $m+1\leq  \lambda \leq n - s$, there are $p^m$ inequivalent irreducible rational representations of degree $\phi(p^\lambda)$. In this scenario, $G$ is always split, meaning $r=0$ (refer to \eqref{prest:metacyclic} for further details). Since $n - s > m - t$ for $1 \leq t \leq s$, from Case 1 (Sub-case 1) of Remark \ref{remark:galoisclasses_metacyclic}, there exist $p^{m - t}$ distinct Galois conjugacy classes of irreducible complex characters of degree $p^t$, each with size $\phi(p^{n - s})$. Consequently, $G$ has $p^{m - t}$ inequivalent irreducible rational representations of degree $p^t \phi(p^{n - s}) = \phi(p^{n - s + t})$ for each $1 \leq t \leq s$. This completes the proof of Theorem \ref{thm:rationalrep}(1).
			
			\item When $n - s < m$, as detailed in Case 2 of Remark \ref{remark:linear_metacyclic}, $G$ has $p^{\lambda - 1}(p + 1)$ inequivalent non-trivial irreducible rational representations of degree $\phi(p^\lambda)$ for $1 \leq \lambda \leq n - s$, and $p^{n - s}$ inequivalent irreducible rational representations of degree $\phi(p^\lambda)$ for each $\lambda$ with $n - s < \lambda \leq m$. Suppose $m = n - s + k$. We then have two sub-cases to consider.
			\begin{enumerate}
				\item Suppose $k \leq s - r$. This implies $m - n + s \leq s - r$, leading to $r \leq n - m$. Thus, in this sub-case, $G$ is always split, meaning $r = 0$ (refer to \eqref{prest:metacyclic} for more details), and $m \leq n$.
				
				Now, for $1 \leq t \leq k - 1$, we have $n - s < m - t$. Therefore, by Case 1 (Sub-case 2) of Remark \ref{remark:galoisclasses_metacyclic}, there are $p^{n - s}$ distinct Galois conjugacy classes of size $\phi(p^{n - s})$ and $\phi(p^{n - s})$ distinct Galois conjugacy classes of size $\phi(p^\lambda)$ for $n - s+1 \leq \lambda \leq m - t$, corresponding to irreducible complex characters of degree $p^t$ for a fixed $t$ (where $1 \leq t \leq k - 1$). Therefore, $G$ has $p^{n - s}$ distinct irreducible rational representations of degree $p^t \phi(p^{n - s}) = \phi(p^{n - s + t})$ and $\phi(p^{n - s})$ distinct irreducible rational representations of degree $p^t \phi(p^\lambda) = \phi(p^{\lambda + t})$ for $n - s+1 \leq \lambda \leq m - t$, for a fixed $t$ (where $1 \leq t \leq k - 1$).
				
				Further, for $k \leq t \leq s$, we have $n - s \geq m - t$. Therefore, by Case 1 (Sub-case 1) of Remark \ref{remark:galoisclasses_metacyclic}, there are $p^{m - t}$ distinct Galois conjugacy classes of size $\phi(p^{n - s})$ corresponding to irreducible complex characters of degree $p^t$ for a fixed $t$ (where $k \leq t \leq s$). Thus, $G$ has $p^{m - t}$ distinct irreducible rational representations of degree $p^t \phi(p^{n - s}) = \phi(p^{n - s + t})$ for a fixed $t$ (where $k \leq t \leq s$).
				
				To make the counting clearer in this sub-case, we list the irreducible $\mathbb{Q}$-representations of $G$ whose kernels not contain $G'$, along with their distinct degrees, corresponding to the Galois conjugacy classes of irreducible complex characters of distinct degrees, separately in Table \ref{t:1}. 
		\begin{longtable}[c]{|c|c|c|}
			\caption{Irreducible $\mathbb{Q}$-representations of $G$ whose kernel do not contain $G'$ under sub-case $(k\leq s-r)$ \label{t:1}}\\
			\hline
			\pbox{20cm}{Value of $t$\\ $(1\leq t\leq s)$} & \pbox{20cm}{ Degree of irreducible $\mathbb{Q}$-representations of $G$ \\ corresponding to the Galois conjugacy classes \\of irreducible complex characters of $G$ with\\ degree $p^t$} & \pbox{20cm}{Number of irreducible \\ $\mathbb{Q}$-representations of the\\ degree specified in the\\ second column} \\[9mm]
			\hline
			\endfirsthead
			\hline
			\pbox{20cm}{Value of $t$\\ $(1\leq t\leq s)$} & \pbox{20cm}{ Degree of irreducible $\mathbb{Q}$-representations of $G$ \\ corresponding to the Galois conjugacy classes \\of irreducible complex characters of $G$ with\\ degree $p^t$} & \pbox{20cm}{Number of irreducible \\ $\mathbb{Q}$-representations of the\\ degree specified in the\\ second column} \\[9mm]
			\hline
			\endhead
			\hline
			\endfoot
			\hline
			\endlastfoot
			\hline \multirow{5}{*}{$t=1$} & $p\phi(p^{n-s})=\phi(p^{n-s+1})$ & $p^{n-s}$  \\
			& $p\phi(p^{n-s+1})=\phi(p^{n-s+2})$ & $\phi(p^{n-s})$  \\
			& $p\phi(p^{n-s+2})=\phi(p^{n-s+3})$ & $\phi(p^{n-s})$ \\
			& $\vdots$ & $\vdots$  \\
			& $p\phi(p^{m-1})=\phi(p^m)$ & $\phi(p^{n-s})$   \\ \hline
			
			\multirow{5}{*}{$t=2$} & $p^2\phi(p^{n-s})=\phi(p^{n-s+2})$ & $p^{n-s}$  \\
			& $p^2\phi(p^{n-s+1})=\phi(p^{n-s+3})$ & $\phi(p^{n-s})$  \\
			& $p^2\phi(p^{n-s+2})=\phi(p^{n-s+4})$ & $\phi(p^{n-s})$ \\
			& $\vdots$ & $\vdots$  \\
			& $p^2\phi(p^{m-2})=\phi(p^m)$ & $\phi(p^{n-s})$   \\ \hline
			
			$\vdots$ & $\vdots$ & $\vdots$ \\ \hline
			
			\multirow{3}{*}{$t=k-2$} & $p^{k-2}\phi(p^{n-s})=\phi(p^{n-s+k-2})$ & $p^{n-s}$  \\
			& $p^{k-2}\phi(p^{n-s+1})=\phi(p^{n-s+k-1})=\phi(p^m)$ & $\phi(p^{n-s})$\\
			& $p^{k-2}\phi(p^{n-s+2})=\phi(p^{n-s+k})=\phi(p^m)$ & $\phi(p^{n-s})$  \\ \hline
			
			\multirow{2}{*}{$t=k-1$} & $p^{k-1}\phi(p^{n-s})=\phi(p^{n-s+k-1})$ & $p^{n-s}$  \\
			& $p^{k-1}\phi(p^{n-s+1})=\phi(p^{n-s+k})=\phi(p^m)$ & $\phi(p^{n-s})$  \\ \hline
			
			$t=k$ & $p^{k}\phi(p^{n-s})=\phi(p^{n-s+k})=\phi(p^m)$ & $p^{m-k}$  \\ \hline
			
			$t=k+1$ & $p^{k+1}\phi(p^{n-s})=\phi(p^{n-s+k+1})=\phi(p^{m+1})$ & $p^{m-k-1}$  \\ \hline
			
			$\vdots$ & $\vdots$ & $\vdots$ \\ \hline
			
			$t=s$ & $p^{s}\phi(p^{n-s})=\phi(p^{n})$ & $p^{m-s}$  \\ \hline
		\end{longtable}
		
		The result follows by combining all the irreducible rational representations of $G$ with distinct degrees, as discussed in this sub-case, with the aid of Table \ref{t:1}.
				
				\item Suppose $k > s - r$. In this case, $m > n - r$, and $n - s < m - t$ for $1 \leq t \leq s - r$. According to Sub-case 2 of Case 1 of Remark \ref{remark:galoisclasses_metacyclic}, there are $p^{n - s}$ distinct Galois conjugacy classes of size $\phi(p^{n - s})$ and $\phi(p^{n - s})$ distinct Galois conjugacy classes of size $\phi(p^\lambda)$ for $n - s < \lambda \leq m - t$ of irreducible complex characters of degree $p^t$ for a fixed $t$ (where $1 \leq t \leq s - r$). Therefore, $G$ has $p^{n - s}$ distinct irreducible rational representations of degree $p^t \phi(p^{n - s}) = \phi(p^{n - s + t})$ and $\phi(p^{n - s})$ distinct irreducible rational representations of degree $p^t \phi(p^\lambda) = \phi(p^{\lambda + t})$ for $n - s < \lambda \leq m - t$ for a fixed $t$ (where $1 \leq t \leq s - r$). Additionally, from Case 2 of Remark \ref{remark:galoisclasses_metacyclic}, for a fixed $t$ where $s - r + 1 \leq t \leq s$, there are $p^{n - r - t}$ distinct Galois conjugacy classes of irreducible complex characters of degree $p^t$, each with size $\phi(p^{m + r - s})$. Hence, $G$ has $p^{n - r - t}$ inequivalent irreducible rational representations of degree $p^t \phi(p^{m + r - s}) = \phi(p^{m + r - s + t})$ for each $s - r + 1 \leq t \leq s$. 
				
				Similar to Table \ref{t:1} in the previous sub-case, Table \ref{t:2} lists the irreducible $\mathbb{Q}$-representations of $G$ in this sub-case whose kernels do not contain $G'$, along with their distinct degrees, corresponding to the Galois conjugacy classes of irreducible complex characters of distinct degrees.
				
				\begin{longtable}[c]{|c|c|c|}
					\caption{Irreducible $\mathbb{Q}$-representations of $G$ whose kernel do not contain $G'$ under sub-case $(k > s-r)$ \label{t:2}}\\
					\hline
					\pbox{20cm}{Value of $t$\\ $(1\leq t\leq s)$} & \pbox{20cm}{ Degree of irreducible $\mathbb{Q}$-representations of $G$ \\ corresponding to the Galois conjugacy classes \\of irreducible complex characters of $G$ with\\ degree $p^t$} & \pbox{20cm}{Number of irreducible \\ $\mathbb{Q}$-representations of the\\ degree specified in the\\ second column} \\[9mm]
					\hline
					\endfirsthead
					\hline
					\pbox{20cm}{Value of $t$\\ $(1\leq t\leq s)$} & \pbox{20cm}{ Degree of irreducible $\mathbb{Q}$-representations of $G$ \\ corresponding to the Galois conjugacy classes \\of irreducible complex characters of $G$ with\\ degree $p^t$} & \pbox{20cm}{Number of irreducible \\ $\mathbb{Q}$-representations of the\\ degree specified in the\\ second column} \\[9mm]
					\hline
					\endhead
					\hline
					\endfoot
					\hline
					\endlastfoot
					\hline \multirow{5}{*}{$t=1$} & $p\phi(p^{n-s})=\phi(p^{n-s+1})$ & $p^{n-s}$  \\
					& $p\phi(p^{n-s+1})=\phi(p^{n-s+2})$ & $\phi(p^{n-s})$  \\
					& $p\phi(p^{n-s+2})=\phi(p^{n-s+3})$ & $\phi(p^{n-s})$ \\
					& $\vdots$ & $\vdots$  \\
					& $p\phi(p^{m-1})=\phi(p^m)$ & $\phi(p^{n-s})$   \\ \hline
					
					\multirow{5}{*}{$t=2$} & $p^2\phi(p^{n-s})=\phi(p^{n-s+2})$ & $p^{n-s}$  \\
					& $p^2\phi(p^{n-s+1})=\phi(p^{n-s+3})$ & $\phi(p^{n-s})$  \\
					& $p^2\phi(p^{n-s+2})=\phi(p^{n-s+4})$ & $\phi(p^{n-s})$ \\
					& $\vdots$ & $\vdots$  \\
					& $p^2\phi(p^{m-2})=\phi(p^m)$ & $\phi(p^{n-s})$   \\ \hline
					
					$\vdots$ & $\vdots$ & $\vdots$ \\ \hline
					
					\multirow{5}{*}{$t=s-r$} & $p^{s-r}\phi(p^{n-s})=\phi(p^{n-r})$ & $p^{n-s}$  \\
					& $p^{s-r}\phi(p^{n-s+1})=\phi(p^{n-r+1})$ & $\phi(p^{n-s})$  \\
					& $p^{s-r}\phi(p^{n-s+2})=\phi(p^{n-r+2})$ & $\phi(p^{n-s})$ \\
					& $\vdots$ & $\vdots$  \\
					& $p^{s-r}\phi(p^{m-s+r})=\phi(p^m)$ & $\phi(p^{n-s})$   \\ \hline
					
					$t=s-r+1$ & $p^{s-r+1}\phi(p^{m+r-s})=\phi(p^{m+1})$ & $p^{n-s-1}$  \\ \hline
					
					$t=s-r+2$ & $p^{s-r+2}\phi(p^{m+r-s})=\phi(p^{m+2})$ & $p^{n-s-2}$  \\ \hline
					
					$\vdots$ & $\vdots$ & $\vdots$ \\ \hline
					
					$t=s$ & $p^{s}\phi(p^{m+r-s})=\phi(p^{m+r})$ & $p^{n-s-r}$  \\ \hline
					
				\end{longtable}
				
			By combining all the irreducible rational representations of $G$ discussed in this sub-case, with the help of Table \ref{t:2}, we obtain the result.
			\end{enumerate}
			This completes the proof of Theorem \ref{thm:rationalrep}(2). \qedhere
		\end{enumerate}
	\end{proof}	
	
	\subsection{Rational matrix representations of ordinary metacyclic $p$-groups} \label{subsec:rationalmatrixrep}
	 Here, we present a method for computing irreducible matrix representations of an ordinary metacyclic $p$-group over $\mathbb{Q}$. Iida and Yamada \cite{YI} investigated rational matrix representations of metacyclic $p$-groups for an odd prime $p$. Our results extend their work by determining irreducible matrix representations for all ordinary metacyclic $p$-groups, where $p$ is any prime. Before that, we prove some properties of an ordinary metacyclic $p$-group.

	\begin{lemma}\label{lemma:quotientsubgroupmeta}
		The subgroups and quotients of a finite metacyclic group are also metacyclic.
	\end{lemma}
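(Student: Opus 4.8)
The plan is to work directly from the definition: a group is metacyclic precisely when it possesses a cyclic normal subgroup whose corresponding quotient is cyclic. So I would fix a finite metacyclic group $G$ together with a cyclic normal subgroup $N \trianglelefteq G$ for which $G/N$ is cyclic. Both assertions then follow by transporting this data along the inclusion of a subgroup, respectively along a quotient map, and invoking the standard isomorphism theorems together with the elementary fact that subgroups and quotients of cyclic groups are again cyclic.

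For the subgroup case, let $H \leq G$ and set $M = N \cap H$. Since $M$ is a subgroup of the cyclic group $N$, it is cyclic, and since $N \trianglelefteq G$ it follows that $M = N \cap H \trianglelefteq H$. The second isomorphism theorem gives $H/M = H/(N \cap H) \cong HN/N$, which is a subgroup of the cyclic group $G/N$ and hence cyclic. Thus $H$ carries a cyclic normal subgroup with cyclic quotient, so $H$ is metacyclic.

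For the quotient case, let $K \trianglelefteq G$ and consider $G/K$. I would take the image $NK/K$ of $N$ in $G/K$. Being the image of the normal subgroup $N$ under the quotient map, it is normal in $G/K$, and by the second isomorphism theorem $NK/K \cong N/(N \cap K)$ is a quotient of the cyclic group $N$, hence cyclic. The third isomorphism theorem then yields $(G/K)/(NK/K) \cong G/NK$, which is a quotient of the cyclic group $G/N$ and is therefore cyclic. Hence $G/K$ has a cyclic normal subgroup with cyclic quotient, so it too is metacyclic.

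There is no serious obstacle here; the only points requiring mild care are the normality bookkeeping -- namely that $N \cap H$ is normal in $H$ and that $NK/K$ is normal in $G/K$ -- and the consistent use of the isomorphism theorems to realize the relevant quotients as sub- or quotient objects of the cyclic groups $N$ and $G/N$. Finiteness plays no role in the argument itself, although the lemma is only applied to finite groups in what follows.
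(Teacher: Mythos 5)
Your proof is correct and follows essentially the same route as the paper: the subgroup case via $H \cap N$ and the second isomorphism theorem, and the quotient case via the image $NK/K$ together with the third isomorphism theorem. If anything, your write-up is slightly more careful about the normality bookkeeping ($N \cap H \trianglelefteq H$) than the paper's own.
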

	\begin{proof}
		Let $G$ be a finite metacyclic group with $N \trianglelefteq G$, where both $N$ and $G/N$ are cyclic. Consider any subgroup $H < G$. By the Second Isomorphism Theorem, we have
		$$(HN)/N \cong H/(H \cap N).$$
		Since $H \cap N$ is a subgroup of $N$, $H \cap N$ is also cyclic. Additionally, $H/(H \cap N)$ is cyclic because $(HN)/N$ is a subgroup of the cyclic group $G/N$. Therefore, $H$ is metacyclic.\\
		Next, consider the quotient group $G/H$, where $H \trianglelefteq G$. We claim that $(HN)/H \trianglelefteq G/H$, with both $(HN)/H$ and $(G/H)/((HN)/H)$ cyclic. The group $(HN)/H$ is cyclic, as it is the image of the cyclic group $N$ under the canonical projection $G \to G/H$. Furthermore, since both $H$ and $N$ are normal in $G$, it follows that $HN \trianglelefteq G$ and $NH \trianglelefteq G$. By the Third Isomorphism Theorem, we obtain
		$${(HN)}/H \trianglelefteq G/H\,\,\, \text{and}\,\,\, {(G/H)}/{({(HN)}/H)} \cong G/{(HN)} \cong {(G/N)}/{((NH)/N)}.$$
		Since ${(G/N)}/{((NH)/N)}$ is cyclic, $(G/H)/((HN)/H)$ is also cyclic. Thus, $G/H$ is metacyclic. This completes the proof of Lemma \ref{lemma:quotientsubgroupmeta}.
	\end{proof}
Recall that a finite group $G$ is said to be faithful if it has a faithful irreducible complex character.	
Corollary \ref{cor:quotientmeta} asserts that the study of rational matrix representations of an ordinary metacyclic $p$-group can be reduced to the study of rational matrix representations of all faithful ordinary metacyclic $p$-groups.

	\begin{corollary}\label{cor:quotientmeta}
		For any prime $p$, the quotients of an ordinary, finite, metacyclic $p$-group are also ordinary metacyclic.
	\end{corollary}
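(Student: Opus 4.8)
The plan is to reduce immediately to $p=2$ and then to isolate a single subgroup-theoretic invariant that separates King's two categories and is visibly inherited by quotients. If $p$ is odd there are no exceptional metacyclic $p$-groups at all, so by Lemma~\ref{lemma:quotientsubgroupmeta} every quotient of $G$ is a metacyclic $p$-group and hence ordinary; this settles the odd case. Assume henceforth $p=2$, and for a finite $2$-group $K$ write $U_2(K)=\langle x^4:x\in K\rangle$ for the subgroup generated by all fourth powers. The heart of the proof is the characterization: \emph{a metacyclic $2$-group $K$ is ordinary if and only if $K'\le U_2(K)$}, equivalently the largest exponent-$4$ quotient $K/U_2(K)$ is abelian.

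Granting the characterization, the corollary is a quick deduction. By Lemma~\ref{lemma:quotientsubgroupmeta} any quotient $G/H$ is metacyclic, and it is a $2$-group. Since images of commutators are commutators and $(xH)^4=x^4H$, one has $(G/H)'=G'H/H$ and $U_2(G/H)=U_2(G)H/H$. As $G$ is ordinary we have $G'\le U_2(G)$, so passing to images gives $(G/H)'=G'H/H\le U_2(G)H/H=U_2(G/H)$; by the characterization $G/H$ is ordinary.

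It remains to prove the characterization, which I would split along the two presentation shapes of King's classification. The forward direction is a direct computation: if $K$ is ordinary as in \eqref{prest:metacyclic} then $K'=\langle[b,a]\rangle=\langle a^{2^{n-s}}\rangle$, and the defining constraint for $p=2$ forces $s<n-1$, i.e. $n-s\ge2$, so $a^{2^{n-s}}=(a^{2^{n-s-2}})^{4}\in U_2(K)$ and $K'\le U_2(K)$ (the abelian case $K'=1$ being trivial). The converse — that exceptional groups fail the inclusion — is the main obstacle. Here $K$ is as in \eqref{prest:exmetacyclic} with $n-s\ge2$, so $[b,a]=a^{-2+2^{n-s}}$ gives $K'=\langle a^2\rangle$, and I must show $a^2\notin U_2(K)$. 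I plan to do this uniformly by passing to $\bar K=K/\langle a^4\rangle$, which is legitimate since $\langle a^4\rangle$ is characteristic in $\langle a\rangle\trianglelefteq K$. In $\bar K$ the element $\bar a$ has order $4$, and because $2^{n-s}\equiv0\pmod4$ the conjugation relation becomes $\bar b\bar a\bar b^{-1}=\bar a^{-1}$; a short calculation then shows every square $(\bar a^{i}\bar b^{j})^{2}$, and hence every fourth power, lies in $\langle\bar b\rangle$, so that $U_2(\bar K)\le\langle\bar b\rangle$. Since $\langle a\rangle\cap\langle b\rangle=\langle a^{2^{n-r}}\rangle$ collapses in $\bar K$ (giving $\langle\bar a\rangle\cap\langle\bar b\rangle=1$), we obtain $\bar a^{2}\notin\langle\bar b\rangle\supseteq U_2(\bar K)$, whence $a^2\notin U_2(K)$ as required. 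The only point needing care is this triviality of $\langle\bar a\rangle\cap\langle\bar b\rangle$: it is immediate in the split case $r=0$ and for $r=1$ when $n\ge3$, while the single residual group — the generalized quaternion group $Q_8$ — is checked by hand, where $U_2(Q_8)=1$ and $Q_8'\ne1$.
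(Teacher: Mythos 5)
Your proposal is correct in substance but takes a genuinely different route from the paper. The paper also disposes of odd $p$ via Lemma \ref{lemma:quotientsubgroupmeta} and then argues by contradiction using character theory: if some quotient $G/H$ were exceptional as in \eqref{prest:exmetacyclic}, it would carry a degree-$2$ irreducible representation whose character is rational-valued; lifting it to $G$ contradicts the explicit formula \eqref{eq:chatractervalue}, which forces $\sqrt{-1}\in\mathbb{Q}(\chi)$ for every nonlinear $\chi\in\Irr(G)$ when $G$ is ordinary. You instead isolate the purely group-theoretic invariant $G'\le U_2(G)$ (with $U_2$ the subgroup generated by fourth powers), observe that both $G'$ and $U_2(G)$ pass to images under quotient maps, and verify the invariant holds for the ordinary presentations \eqref{prest:metacyclic} (where $G'=\langle a^{2^{n-s}}\rangle$ and $n-s\ge 2$) and fails for the exceptional ones \eqref{prest:exmetacyclic} (where $G'=\langle a^2\rangle$). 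Your argument is more elementary and self-contained --- it needs nothing from Sections \ref{sec:complexrep}--\ref{sec:rationalrep} --- at the cost of a case analysis over King's presentations; the paper's argument is shorter on the page but leans on the representation-theoretic machinery already built. One slip to fix in your converse step: it is not true that every square $(\bar a^i\bar b^j)^2$ lies in $\langle\bar b\rangle$ --- for $j$ even and $i$ odd one gets $\bar a^{2i}\bar b^{2j}$, and indeed $\bar a^2$ itself is a square, which is exactly the element you are trying to exclude. What is true, and all you need, is that every \emph{fourth} power lies in $\langle\bar b\rangle$: one computes $(\bar a^i\bar b^j)^2=\bar b^{2j}$ for $j$ odd and $\bar a^{2i}\bar b^{2j}$ for $j$ even, and in either case the square of that is $\bar b^{4j}$, so $U_2(\bar K)\le\langle\bar b\rangle$. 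With that correction, together with your verification that $\bar a^2\notin\langle\bar b\rangle$ (using $n-r\ge 2$ outside the single residual case $Q_8$, which you rightly check by hand), the argument goes through.
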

	
	\begin{proof}
		If $p$ is odd, then the result directly follows from Lemma \ref{lemma:quotientsubgroupmeta} and \eqref{prest:metacyclic}. Now, let $G$ be an ordinary, finite, metacyclic $2$-group (as defined in \eqref{prest:metacyclic}). On the contrary, suppose,  $G/H$ is not ordinary metacyclic for any given normal subgroup $H$ of $G$, i.e., it is exceptional metacyclic (see \eqref{prest:exmetacyclic}). Consider the unique reduced presentation of $G/H$ as follows:
		\begin{equation*}
			G/H = \langle a, b ~ : ~ a^{2^n} = 1, b^{2^m} = a^{2^{n-r}}, bab^{-1} = a^{-1+2^{n-s}} \rangle
		\end{equation*}
		for certain integers $m$, $n$, $r$, and $s$ (as given in \eqref{prest:exmetacyclic}). Define a homomorphism $\rho : G/H \rightarrow GL_2(\mathbb{C})$ as follows:
		\begin{equation*}
			\rho(a) = \left(
			\begin{array}{cc}
				\zeta_4 & 0\\
				0 & \zeta_4^{-1}\\
			\end{array}\right) \, \, \, \text{and} \, \, \, \rho(b) = \left(
			\begin{array}{cc}
				0 & 1\\
				1 & 0\\
			\end{array}\right),
		\end{equation*}
		where $\zeta_4$ is a primitive $4$-th root of unity. Observe that $\rho$ is an irreducible complex representation of $G/H$. Let $\bar{\chi}$ denote the character corresponding to $\rho$, and let $\chi$ be the lift of $\bar{\chi}$ to $G$. Then we have $\mathbb{Q}(\chi)=\mathbb{Q}$. This implies that $G$ has an irreducible complex representation of degree $2$ affording the character $\chi$ such that $\mathbb{Q}(\chi)=\mathbb{Q}$, which is a contradiction, because $\sqrt{-1} \in \mathbb{Q}(\chi)$ (see \eqref{eq:chatractervalue}). Hence, $G/H$ cannot be an exceptional metacyclic $2$-group. Therefore, $G/H$ is an ordinary metacyclic $2$-group. This completes the proof of Corollary \ref{cor:quotientmeta}.
	\end{proof}
	
	 Lemma \ref{lemma:faithfulmeta} classifies all faithful ordinary metacyclic $p$-groups.
	\begin{lemma}\label{lemma:faithfulmeta}
		Let $G$ be a faithful, ordinary, finite, non-cyclic metacyclic $p$-group. Then $G$ has, up to isomorphism, exactly one uniquely reduced presentation, which is one of the following two types:
		\begin{enumerate}
			\item $G_1 = \langle a, b ~ : ~ a^{p^n} = 1, \, b^{p^m} = a^{p^{s}}, \, bab^{-1} = a^{1+p^{n-s}} \rangle$ for certain integers $n$, $m$ and $s$, where $\max\{1, n-m+1\} \leq n-s < s$ for $p \geq 3$, and $\max\{2, n-m+1\} \leq n-s < s$ for $p=2$.
			\item $G_2 = \langle a, b ~ : ~ a^{p^n} = b^{p^s} = 1, \, bab^{-1} = a^{1+p^{n-s}} \rangle$, where $1 \leq s \leq n-1$ for $p\geq 3$, and $1 \leq s < n-1$ for $p=2$.
		\end{enumerate}
	\end{lemma}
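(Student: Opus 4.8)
The plan is to reduce the whole statement to a single computation of the center $Z(G)$, exploiting the standard fact that a finite $p$-group admits a faithful irreducible complex character if and only if its center is cyclic. Thus an ordinary metacyclic $p$-group $G$ (as in \eqref{prest:metacyclic}) is faithful precisely when $Z(G)$ is cyclic, and since by King's theorem each such $G$ carries a \emph{unique} reduced presentation, the task becomes to decide, among all reduced presentations, which ones have cyclic center and then match them against $G_1$ and $G_2$. First I would identify $Z(G)$ explicitly: writing a general element in normal form $a^i b^j$ and imposing commutation with $a$ and with $b$, the relation $bab^{-1} = a^{1+p^{n-s}}$ together with Corollary \ref{cor:order} (which gives that $1+p^{n-s}$ has multiplicative order $p^s$ modulo $p^n$) forces $p^s \mid i$ and $p^s \mid j$. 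Hence $Z(G) = \langle a^{p^s}, b^{p^s}\rangle$, an abelian group of order $p^{n+m-2s}$.

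Next I would treat the two families of reduced presentations separately. In the split case $r = 0$ we have $b^{p^m} = 1$ and $\langle a\rangle \cap \langle b\rangle = 1$, so $Z(G) = \langle a^{p^s}\rangle \times \langle b^{p^s}\rangle \cong C_{p^{n-s}} \times C_{p^{m-s}}$. Since $s < n$ forces $n-s \geq 1$, this is cyclic if and only if $m - s = 0$, i.e. $s = m$; substituting $s = m$ (so that $b^{p^s} = 1$) yields exactly the presentation $G_2$. Translating the reduced-presentation inequality $0 = r \leq s < \min\{m+1, n\}$ under $s = m$ gives $m < n$, i.e. $1 \leq s \leq n-1$, while the extra constraint $s < n-1$ that the definition imposes on ordinary metacyclic $2$-groups produces the stated bound for $p = 2$.

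In the non-split case $r \geq 1$ the relation $b^{p^m} = a^{p^{n-r}}$ couples the two generators, and this is where the main work lies. Here $b^{p^{m+r}} = a^{p^n} = 1$ while $b^{p^{m+r-1}} = a^{p^{n-1}} \neq 1$, so $b$ has order $p^{m+r}$; consequently $v := b^{p^s}$ has order $p^{m+r-s}$, whereas $u := a^{p^s}$ has order $p^{n-s}$. The crucial observation is that the reduced-presentation condition $\max\{1, n-m+1\} \leq r$ gives $n < m+r$, whence $\mathrm{ord}(v) = p^{m+r-s} > p^{n-s} = \mathrm{ord}(u)$. Since every element of the abelian group $Z(G) = \langle u, v\rangle$ has the form $u^x v^y$ and thus order dividing $\mathrm{lcm}(\mathrm{ord}(u),\mathrm{ord}(v)) = p^{m+r-s}$, the exponent of $Z(G)$ equals $p^{m+r-s}$. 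Therefore $Z(G)$, of order $p^{n+m-2s}$, is cyclic if and only if $p^{m+r-s} = p^{n+m-2s}$, that is $r = n-s$. Substituting $r = n-s$ turns $b^{p^m} = a^{p^{n-r}}$ into $b^{p^m} = a^{p^s}$, giving exactly $G_1$, and the inequalities $\max\{1, n-m+1\} \leq r < \min\{s, n-s+1\}$ become $\max\{1, n-m+1\} \leq n-s < s$, with the $2$-group constraint upgrading the lower bound $1$ to $2$.

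The main obstacle is precisely the center computation in the non-split case: because $b^{p^m} \in \langle a\rangle$ is non-trivial, $Z(G)$ is genuinely $2$-generated with a coupling relation, so cyclicity cannot be read off directly. The resolution is the order/exponent comparison above, which hinges on the inequality $n < m+r$ extracted from the reduced-presentation constraints. Once cyclicity is characterized by $r = n-s$, the identification with $G_1$ and $G_2$ is immediate, and the uniqueness of the reduced presentation guarantees that each faithful $G$ has exactly one such presentation, completing the classification.
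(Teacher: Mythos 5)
Your proposal is correct and follows essentially the same route as the paper: both reduce faithfulness to cyclicity of $Z(G)$ via \cite[Theorem 2.32]{I}, compute $Z(G)=\langle a^{p^s},b^{p^s}\rangle$ of order $p^{n+m-2s}$, and show cyclicity holds exactly when $n-r=s$. The only differences are organizational — you split into the cases $r=0$ and $r\geq 1$ and compare exponent with order, whereas the paper splits into $s<m$ and $s=m$ and exhibits a power of $p$ killing all of $Z(G)$ when $n-r>s$ — but the underlying argument is the same.
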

	\begin{proof}
		Let $p$ be a prime. Consider an ordinary, finite, non-cyclic metacyclic $p$-group $G$ with the following unique reduced presentation:
		\begin{equation*}
			G = \langle a, b ~ : ~ a^{p^n} = 1, \, b^{p^m} = a^{p^{n-r}}, \, bab^{-1} = a^{1+p^{n-s}} \rangle,
		\end{equation*}
		for certain integers $m, n, r$ and $s$ (as defined in \eqref{prest:metacyclic}). Here, $Z(G) = \langle a^{p^s}, \, b^{p^s} \rangle$ and $|Z(G)| = p^{n+m-2s}$. Furthermore, observe that $n-r \geq s$. Since $G$ is a $p$-group, $G$ has a faithful irreducible complex character if and only if $Z(G)$ is cyclic (see \cite[Theorem 2.32]{I}). We proceed with the rest of the proof in the following two cases.
		\begin{enumerate}
			\item {\bf Case ($s < m$).} In this case, we have the following claim.\\
			{\bf Claim.} If $n-r > s$, then $Z(G)$ is not cyclic.\\
			{\bf Proof of the claim.} We have 
			\begin{align*}
				(a^{p^si}b^{p^sj})^{p^{n+m-2s-1}} &= (a^{p^n})^{ip^{m-s-1}} (b^{p^m})^{jp^{n-s-1}}\\
				&= (a^{p^{n-r}})^{jp^{n-s-1}}\\
				&= (a^{p^n})^{jp^{n-r-s-1}}\\
				&= 1
			\end{align*}
			for some $i, j \in \mathbb{Z}$. Thus, $Z(G)$ does not have an element of order $p^{n+m-2s}$, and hence $Z(G)$ is not cyclic. This completes the proof of the claim.\\
			Moreover, observe that $\langle a^{p^s} \rangle \cap \langle b^{p^s} \rangle = \langle a^{p^{n-r}} \rangle = \langle a^{p^{s}} \rangle$ when $n-r = s$. Hence, $Z(G) = \langle b^{p^s} \rangle$ is cyclic when $n-r = s$. Therefore, in this case, $G$ is a faithful ordinary metacyclic $p$-group if and only if it is isomorphic to $G_1$.
			
			\item {\bf Case ($s = m$).} In this case, $Z(G)$ is necessarily cyclic because $\langle a^{p^s} \rangle \cap \langle b^{p^s} \rangle = \langle b^{p^m} \rangle = \langle b^{p^s} \rangle$, and hence $Z(G) = \langle a^{p^s} \rangle = \langle b^{p^s} \rangle$. Furthermore, $Z(G) = \langle a^{p^s} \rangle = \langle b^{p^s} \rangle = \langle b^{p^m} \rangle = \langle a^{p^{n-r}} \rangle$. Hence, $n-r = s$. Therefore, in this case, $G$ is a faithful ordinary metacyclic $p$-group if and only if it is isomorphic to $G_2$.
		\end{enumerate}
		This completes the proof of Lemma \ref{lemma:faithfulmeta}.
	\end{proof}

Under the set up of Lemma \ref{lemma:faithfulmeta}, we have the following Corollary. 	
	
	\begin{corollary}
		Let $G$ be a faithful ordinary metacyclic $p$-group, and let $\chi \in \Irr(G)$. Then $\chi \in \FIrr(G)$ if and only if $\chi(1)=p^s$.
	\end{corollary}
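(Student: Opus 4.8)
The plan is to prove both implications by reducing faithfulness to the action of $\chi$ on the center $Z(G)$, and then reading off the decisive character value from the explicit formula \eqref{eq:chatractervalue}. Since $G$ is a faithful ordinary metacyclic $p$-group, Lemma \ref{lemma:faithfulmeta} (via \cite[Theorem 2.32]{I}) guarantees that $Z(G)$ is cyclic, and Proposition \ref{prop:metacomplexrep}(1) gives $\cd(G)=\{p^t : 0\le t\le s\}$. The guiding principle is the standard fact that in a $p$-group every nontrivial normal subgroup meets the center nontrivially; hence for $\chi\in\Irr(G)$ one has $\ker\chi=1$ if and only if $\ker\chi\cap Z(G)=1$, and, as $Z(G)$ is cyclic, the latter holds exactly when $\ker\chi$ avoids the unique subgroup of order $p$ of $Z(G)$. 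I would first record that faithfulness forces $G$ to be non-abelian (an abelian non-cyclic group has non-cyclic center), so that $s\ge 1$ and $G'=\langle a^{p^{n-s}}\rangle\neq 1$.

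For the forward implication I would argue by contrapositive: assume $\chi\in\Irr^{(p^t)}(G)$ with $t<s$ and exhibit a nontrivial element of $\ker\chi$. For $t\ge 1$, evaluating \eqref{eq:chatractervalue} at $a^{p^{n-s+t}}$ (so $i=p^{n-s+t}$, $j=0$) gives $\chi(a^{p^{n-s+t}})=p^{t}\zeta^{lp^{n}}=p^{t}=\chi(1)$, so $a^{p^{n-s+t}}\in\ker\chi$; since $t<s$ we have $n-s+t<n$, whence $a^{p^{n-s+t}}\neq 1$ and $\chi$ is not faithful. The linear case $t=0$ is immediate, since every $\chi\in\lin(G)$ is trivial on $G'=\langle a^{p^{n-s}}\rangle\neq 1$.

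For the backward implication, let $\chi\in\Irr^{(p^{s})}(G)$ be afforded by $T_{l,\omega}$ as in \eqref{eq:non-linearcomplexrep} with $\gcd(l,p)=1$. The key step is to locate the unique minimal subgroup of $Z(G)$ explicitly. Since $s\le n-1$, a short computation with $bab^{-1}=a^{1+p^{n-s}}$ shows that $a^{p^{n-1}}$ is central of order $p$, so $\langle a^{p^{n-1}}\rangle$ is the unique subgroup of order $p$ of the cyclic group $Z(G)$. Evaluating \eqref{eq:chatractervalue} at $a^{p^{n-1}}$ (here $t=s$, so $p^{s}\mid p^{n-1}$ and $j=0$) yields $\chi(a^{p^{n-1}})=p^{s}\zeta^{lp^{n-1}}$, and because $\gcd(l,p)=1$ the number $\zeta^{lp^{n-1}}$ is a primitive $p$-th root of unity, hence $\neq 1$. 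Therefore $\chi(a^{p^{n-1}})\neq p^{s}=\chi(1)$, i.e. $a^{p^{n-1}}\notin\ker\chi$; since any nontrivial subgroup of the cyclic group $Z(G)$ contains $\langle a^{p^{n-1}}\rangle$, this forces $\ker\chi\cap Z(G)=1$ and hence $\ker\chi=1$, so $\chi\in\FIrr(G)$.

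I expect the backward implication to be the main obstacle, and within it the crucial move is the clean identification of $\langle a^{p^{n-1}}\rangle$ as the unique order-$p$ central subgroup: this lets me bypass the case distinction between $G_1$ and $G_2$ of Lemma \ref{lemma:faithfulmeta} (whose centers are generated by $b^{p^{s}}$ and $a^{p^{s}}$ respectively) and reduce everything to a single transparent character evaluation. The points that most need care are verifying that $a^{p^{n-1}}$ is genuinely central (using $s\le n-1$) and applying the reduction ``$\ker\chi=1\iff\ker\chi\cap Z(G)=1$'' correctly for the $p$-group $G$.
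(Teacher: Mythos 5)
Your proof is correct, but it takes a genuinely different route from the paper. The paper's own argument is essentially a two-line citation: it invokes Lemma \ref{lemma:faithfulmeta} to reduce to $G_1$ or $G_2$, notes $\cd(G)=\{p^t : 0\le t\le s\}$, and then appeals to \cite[Proposition 3]{Mann} on minimal characters of $p$-groups to conclude that the faithful irreducible characters are exactly those of top degree. You instead give a self-contained computation: the contrapositive of the forward direction via the explicit kernel element $a^{p^{n-s+t}}$ read off from \eqref{eq:chatractervalue}, and the backward direction by identifying $\langle a^{p^{n-1}}\rangle$ as the unique order-$p$ subgroup of the cyclic center and checking $\chi(a^{p^{n-1}})=p^s\zeta^{lp^{n-1}}\ne p^s$ using $\gcd(l,p)=1$, together with the standard fact that a nontrivial normal subgroup of a $p$-group meets the center. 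I checked the details: $a^{p^{n-1}}$ is central precisely because $s\le n-1$ (which the presentation conditions guarantee in both the split and non-split cases), $p^s\mid p^{n-1}$ holds for the same reason, and the $t=0$ case is correctly handled via $G'=\langle a^{p^{n-s}}\rangle\ne 1$ once you observe $s\ge 1$ (a faithful non-cyclic group cannot be abelian). What your approach buys is independence from the external reference to Mann and from the case split between $G_1$ and $G_2$; what the paper's approach buys is brevity and the fact that Mann's proposition is a general structural statement applicable beyond this family. Both are valid.
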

	\begin{proof}
		By Lemma \ref{lemma:faithfulmeta}, $G$ is isomorphic to $G_1$ or $G_2$. Furthermore, $\cd(G) = \{p^t : 0 \leq t \leq s\}$. Hence, by \cite[Proposition 3]{Mann}, the result follows.
	\end{proof}
	
Consider the groups mentioned in Lemma \ref{lemma:faithfulmeta}. Let $\chi \in \FIrr(G_1)$. Then, $\chi(1) = p^s$, and $\chi$ is defined as follows:
	\begin{equation}\label{eq:charactervalue G_1}
		\chi(a^i b^j) = \begin{cases}
			p^s \zeta_{p^{n+m-2s}}^{j_1} \zeta_{p^n}^i & \text{if } p^s \mid i \text{ and } j = j_1 p^s, \\
			0 & \text{otherwise,}
		\end{cases}
	\end{equation}
	where $\zeta_{p^{n+m-2s}}^{p^{m-s}} = \zeta_{p^n}^{p^s} = \zeta_{p^{n-s}}$. Here, $\zeta_{p^{n+m-2s}}$ denotes a primitive $p^{n+m-2s}$-th root of unity, and $\zeta_{p^n}$ denotes a primitive $p^n$-th root of unity (see \eqref{eq:chatractervalue}).
	
	Similarly, let $\chi \in \FIrr(G_2)$. Then, $\chi(1) = p^s$, and $\chi$ is defined as follows:
	\begin{equation}\label{eq::charactervalue G_2}
		\chi(a^i b^j) = \begin{cases}
			p^s \zeta_{p^{n-s}}^{i_1} & \text{if } i= i_1p^s \text{ and } j=0, \\
			0 & \text{otherwise,}
		\end{cases}
	\end{equation}
	where $\zeta_{p^{n-s}}$ denotes a primitive $p^{n-s}$-th root of unity (see \eqref{eq:chatractervalue}).
	
Now, we are ready to describe a method to find out rational matrix representations of an ordinary metacyclic $p$-group and prove Theorem \ref{thm:rationalrep}. We know that, for a finite group $G$ and $\chi \in \Irr(G)$, there exists a unique irreducible $\mathbb{Q}$-representation $\rho$ of $G$ such that $\chi$ appears as an irreducible constituent of $\rho \otimes_{\mathbb{Q}}\mathbb{F}$ with multiplicity $m_{\mathbb{Q}}(\chi)$, where $\mathbb{F}$ is a splitting field of $G$. For a linear complex character $\psi$ of $G$, the irreducible $\mathbb{Q}$-representation of $G$ with character $\Omega(\psi)$ can be derived using Lemma \ref{lemma:YamadaLinear}.
	
	\begin{lemma} \cite[Proposition 1]{Y}\label{lemma:YamadaLinear}
		Let $G$ be a finite group, and let $\psi \in \lin(G)$ and $N = \ker(\psi)$ with $n = [G : N]$. Suppose $G = \cup_{i = 0}^{n-1}Ny^i$. Then
		\[\psi(xy^i) = \zeta_n^i,~  (0 \leq i < n;~ x \in N),\]
		where $\zeta_n$ is a primitive $n$-th root of unity. 
		Now, let $f(X) = X^s - a_{s-1}X^{s-1} - \cdots -a_1X - a_0$ be the irreducible polynomial over $\mathbb{Q}$ such that $f(\zeta_n) = 0$, where $s = \phi(n)$ and
		\[\Psi(xy^i) = \left(\begin{array}{ccccc}
			0 & 1 & 0 & \cdots & 0\\
			0 & 0 & 1 & \cdots & 0\\
			\vdots & \vdots & \vdots & \ddots \\
			0 & 0 & \cdots & 0 & 1\\
			a_0 & a_1 & \cdots & \cdots & a_{s-1}
		\end{array}\right)^i,~ (0 \leq i < n;~ x \in N).\]
		Then $\Psi$ is an irreducible $\mathbb{Q}$-representation of $G$, whose character is $\Omega(\psi)$.
	\end{lemma}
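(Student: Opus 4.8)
The plan is to recognize $\Psi$ as the inflation to $G$ of a representation of the cyclic quotient $G/N$, and then to read off both its character and its irreducibility from the single fact that the matrix $C$ displayed in the statement is the companion matrix of the cyclotomic polynomial $f=\Phi_n$, so that $\mathbb{Q}[C]\cong\mathbb{Q}(\zeta_n)$. First I would record the structure of $\psi$: since $\psi$ is linear with kernel $N$, the subgroup $N$ is normal and $\psi$ embeds $G/N$ into $\mathbb{C}^{\times}$, so $G/N$ is cyclic of order $n=[G:N]$, generated by $yN$. Putting $\zeta_n=\psi(y)$ gives a primitive $n$-th root of unity and $\psi(xy^i)=\psi(y)^i=\zeta_n^i$ for $x\in N$, which is exactly the first displayed identity. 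In particular $\mathbb{Q}(\psi)=\mathbb{Q}(\zeta_n)$ and $f=\Phi_n$ is the $n$-th cyclotomic polynomial, of degree $s=\phi(n)$.

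Next I would verify that $\Psi$ is a well-defined $\mathbb{Q}$-representation. As the companion matrix of $\Phi_n$, the matrix $C$ has minimal polynomial $\Phi_n$; since $\Phi_n \mid X^n-1$, this forces $C^n=I$. Hence the assignment $yN\mapsto C$ extends to a homomorphism $\bar\Psi:G/N\to GL_s(\mathbb{Q})$ (the cyclic group $G/N$ has order $n$ and $C$ has order dividing $n$), and $\Psi$ is precisely the inflation $\bar\Psi\circ\pi$ along the quotient map $\pi:G\to G/N$, so that $\Psi(xy^i)=C^i$ indeed defines a representation. The only delicate point is the ``wrap-around'' $i_1+i_2\geq n$ when multiplying two elements, but this is governed exactly by $C^n=I$ together with $y^n\in N$.

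Then I would compute the character and deduce irreducibility. The roots of $\Phi_n$ are precisely the primitive $n$-th roots of unity, i.e. $\{\sigma(\zeta_n):\sigma\in\gal(\mathbb{Q}(\zeta_n)/\mathbb{Q})\}$, so these are the eigenvalues of $C$ and $\operatorname{tr}(C^i)=\sum_{\sigma}\sigma(\zeta_n^i)=\sum_{\sigma}\psi^{\sigma}(xy^i)$. Since $\psi$ is linear, $m_{\mathbb{Q}}(\psi)=1$, so the character of $\Psi$ equals $\sum_{\sigma\in\gal(\mathbb{Q}(\psi)/\mathbb{Q})}\psi^{\sigma}=\Omega(\psi)$, as asserted. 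For irreducibility I would argue structurally: every $\Psi(g)$ is a power of $C$, hence $\Psi(G)$ lies in the commutative $\mathbb{Q}$-algebra $\mathbb{Q}[C]\cong\mathbb{Q}[X]/(\Phi_n(X))\cong\mathbb{Q}(\zeta_n)$, which is a field of $\mathbb{Q}$-dimension $s$. Any $\mathbb{Q}G$-submodule of $\mathbb{Q}^s$ is therefore a $\mathbb{Q}[C]$-submodule, that is a $\mathbb{Q}(\zeta_n)$-subspace of the $1$-dimensional $\mathbb{Q}(\zeta_n)$-space $\mathbb{Q}^s$, and so must be $0$ or all of $\mathbb{Q}^s$; thus $\Psi$ is irreducible over $\mathbb{Q}$.

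The step needing the most attention is irreducibility, and the cleanest route is the observation just described that the image of $G$ generates a field. An alternative, should one prefer to lean on the machinery already set up, is to invoke that $\Omega(\psi)$ is the character of the \emph{unique} irreducible $\mathbb{Q}$-representation attached to the Galois orbit of $\psi$ (by Schur's theory and the semisimplicity of $\mathbb{Q}G$ in characteristic $0$), so that the coincidence of characters forces $\Psi$ to be that irreducible representation. Everything else is routine bookkeeping once $C^n=I$ and the eigenvalue description of $C$ are in place.
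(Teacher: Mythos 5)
The paper does not prove this lemma at all: it is imported verbatim as \cite[Proposition 1]{Y}, so there is no in-paper argument to compare yours against. Your proof is correct and complete on its own terms, and it follows the natural (and surely Yamada's) route: identify $f$ with $\Phi_n$, note that the displayed matrix $C$ is its companion matrix so that $C^n=I$ and $\Psi$ is the inflation of the representation $yN\mapsto C$ of the cyclic quotient $G/N$, read off the character from the eigenvalues of $C$ (the full Galois orbit of $\zeta_n$, with $m_{\mathbb{Q}}(\psi)=1$ since $\psi$ is linear), and get irreducibility from the fact that $\mathbb{Q}[C]\cong\mathbb{Q}(\zeta_n)$ is a field over which $\mathbb{Q}^s$ is one-dimensional. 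The only point worth a half-sentence more in a written-up version is why $\mathbb{Q}^s$ is one-dimensional over $\mathbb{Q}[C]$: since $\Phi_n$ is irreducible, the minimal and characteristic polynomials of $C$ coincide, so $\mathbb{Q}^s$ is a cyclic $\mathbb{Q}[X]$-module isomorphic to $\mathbb{Q}[X]/(\Phi_n)$; your alternative argument via uniqueness of the irreducible $\mathbb{Q}$-representation with character $\Omega(\psi)$ is equally valid.
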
	 
Let $G$ be a finite group and $H$ a subgroup of $G$. Lemma \ref{lemma:Yamada} establishes the relationship between $\Omega(\psi)$ and $\Omega(\psi^G)$ for $\psi \in \Irr(H)$.

	\begin{lemma} \cite[Proposition 3]{Y}\label{lemma:Yamada}
		Let $G$ be a finite group, and let $H$ be a subgroup of $G$ and $\psi \in \Irr(H)$ such that $\psi^G \in\Irr(G)$. Then $m_\mathbb{Q}(\psi^G)$ divides $m_\mathbb{Q}(\psi)[\mathbb{Q}(\psi) : \mathbb{Q}(\psi^G)]$. Furthermore, the induced character $\Omega(\psi)^G$ of $G$ is a character of an irreducible $\mathbb{Q}$-representation of $G$, if and only if
		\[m_\mathbb{Q}(\psi^G) = m_\mathbb{Q}(\psi)[\mathbb{Q}(\psi) : \mathbb{Q}(\psi^G)].\]
		In this case, $\Omega(\psi)^G = \Omega(\psi^G)$.
	\end{lemma}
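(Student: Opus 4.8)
The plan is to write $\Omega(\psi)^G$ as an explicit nonnegative integer multiple of $\Omega(\psi^G)$, from which both assertions fall out simultaneously.

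First I would record the preliminary facts. Induction commutes with the Galois action: for any field automorphism $\sigma$ one has $(\psi^\sigma)^G = (\psi^G)^\sigma$, which is immediate from the induced-character formula since $\sigma$ fixes $\mathbb{Q}$ and $1/|H|$ is rational. Consequently, if $\sigma$ fixes $\mathbb{Q}(\psi)$ pointwise then $\psi^\sigma = \psi$, so $(\psi^G)^\sigma = \psi^G$; this shows $\mathbb{Q}(\psi^G) \subseteq \mathbb{Q}(\psi)$, so the index $[\mathbb{Q}(\psi):\mathbb{Q}(\psi^G)]$ is a well-defined positive integer. Since the values of any complex character lie in a cyclotomic field, $\mathbb{Q}(\psi)/\mathbb{Q}$ is abelian; hence $\mathbb{Q}(\psi^G)/\mathbb{Q}$ is normal and the restriction map $\gal(\mathbb{Q}(\psi)/\mathbb{Q}) \to \gal(\mathbb{Q}(\psi^G)/\mathbb{Q})$ is a surjective homomorphism with kernel $\gal(\mathbb{Q}(\psi)/\mathbb{Q}(\psi^G))$ of order $[\mathbb{Q}(\psi):\mathbb{Q}(\psi^G)]$.

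Next I would compute directly. Starting from $\Omega(\psi) = m_{\mathbb{Q}}(\psi)\sum_{\sigma\in\gal(\mathbb{Q}(\psi)/\mathbb{Q})}\psi^\sigma$ and using that induction is additive and commutes with the Galois action,
\[\Omega(\psi)^G = m_{\mathbb{Q}}(\psi)\sum_{\sigma\in\gal(\mathbb{Q}(\psi)/\mathbb{Q})}(\psi^G)^\sigma.\]
Since $(\psi^G)^\sigma$ depends only on $\sigma|_{\mathbb{Q}(\psi^G)}$, and since by the first step each $\tau\in\gal(\mathbb{Q}(\psi^G)/\mathbb{Q})$ is the restriction of exactly $[\mathbb{Q}(\psi):\mathbb{Q}(\psi^G)]$ elements $\sigma$, the sum collapses to
\[\Omega(\psi)^G = m_{\mathbb{Q}}(\psi)\,[\mathbb{Q}(\psi):\mathbb{Q}(\psi^G)]\sum_{\tau\in\gal(\mathbb{Q}(\psi^G)/\mathbb{Q})}(\psi^G)^\tau.\]
Comparing this with $\Omega(\psi^G) = m_{\mathbb{Q}}(\psi^G)\sum_{\tau}(\psi^G)^\tau$ gives $\Omega(\psi)^G = c\,\Omega(\psi^G)$, where $c = m_{\mathbb{Q}}(\psi)[\mathbb{Q}(\psi):\mathbb{Q}(\psi^G)]/m_{\mathbb{Q}}(\psi^G)$.

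Finally, I would argue that $c$ is a positive integer. The induced module $\mathbb{Q}G\otimes_{\mathbb{Q}H}V$, where $V$ affords the $\mathbb{Q}$-character $\Omega(\psi)$, is a genuine $\mathbb{Q}$-representation, so $\Omega(\psi)^G$ is a character of a $\mathbb{Q}$-representation of $G$; since $\Omega(\psi^G)$ is the irreducible $\mathbb{Q}$-character attached to the Galois class of $\psi^G$, the multiplicity $c$ of $\Omega(\psi^G)$ in $\Omega(\psi)^G$ is a nonnegative integer, and it is positive as $\Omega(\psi)^G$ has positive degree. This yields at once the divisibility $m_{\mathbb{Q}}(\psi^G)\mid m_{\mathbb{Q}}(\psi)[\mathbb{Q}(\psi):\mathbb{Q}(\psi^G)]$. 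Because characters in characteristic zero determine $\mathbb{Q}$-representations up to isomorphism, $\Omega(\psi)^G$ affords an irreducible $\mathbb{Q}$-representation if and only if $c=1$, i.e. if and only if $m_{\mathbb{Q}}(\psi^G) = m_{\mathbb{Q}}(\psi)[\mathbb{Q}(\psi):\mathbb{Q}(\psi^G)]$, in which case the identity reads $\Omega(\psi)^G = \Omega(\psi^G)$. The only genuinely delicate point is the surjectivity of the restriction map on Galois groups, which rests on the abelian nature of $\mathbb{Q}(\psi)/\mathbb{Q}$ (so that every subextension is normal); everything else is bookkeeping with the defining formula of $\Omega$.
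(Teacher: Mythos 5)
Your argument is correct and complete: the identity $\Omega(\psi)^G = c\,\Omega(\psi^G)$ with $c = m_{\mathbb{Q}}(\psi)[\mathbb{Q}(\psi):\mathbb{Q}(\psi^G)]/m_{\mathbb{Q}}(\psi^G)$, together with the observation that $c$ must be a positive integer because $\Omega(\psi)^G$ is the character of an actual $\mathbb{Q}G$-module, yields both the divisibility and the irreducibility criterion at once. Note that the paper does not prove this statement at all --- it is quoted verbatim from Yamada \cite[Proposition 3]{Y} --- and your proof is essentially the standard argument behind that reference (Galois-equivariance of induction plus the Galois correspondence inside a cyclotomic field), so there is nothing to fault and nothing genuinely different to compare against.
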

	
	Next, let $G$ be an ordinary metacyclic $p$-group, and consider $\chi \in \Irr(G)$. Since $G$ is monomial, there exists a subgroup $H \leq G$ and a character $\psi \in \lin(H)$ such that $\chi = \psi^G$. Additionally, by Lemma \ref{lemma:schurindex}, we know that $m_{\mathbb{Q}}(\chi) = 1$. Furthermore, by Lemma \ref{lemma:Yamada}, we see that $\Omega(\psi)^G = \Omega(\psi^G) = \Omega(\chi)$ if and only if $\mathbb{Q}(\psi) = \mathbb{Q}(\chi)$. 
	For a faithful ordinary metacyclic $p$-group $G$ and $\chi \in \FIrr(G)$, Theorem \ref{thm:reqpairmeta} provides a pair $(H, \psi)$, where $H$ is a subgroup of $G$, and $\psi \in \lin(H)$, such that $\psi^G = \chi$ and $\mathbb{Q}(\psi) = \mathbb{Q}(\chi)$.
	
	\begin{theorem} \label{thm:reqpairmeta}
		Let $p$ be a prime. Consider a faithful ordinary, finite, non-cyclic metacyclic $p$-group $G$. Let $\chi \in \FIrr(G)$. Then there exists an abelian subgroup $H$ of $G$ with $\psi \in \lin(H)$ such that $\psi^G = \chi$ and $\mathbb{Q}(\psi) = \mathbb{Q}(\chi)$. Moreover, we have the following.
		\begin{enumerate}
			\item Let $G=G_1$ and $\chi \in \FIrr(G)$ (as defined in \eqref{eq:charactervalue G_1}). Then we have the following two cases.
			\begin{enumerate}
				\item {\bf Case ($m \geq 2s$).} In this case, $H=\langle a, \, b^{p^s} \rangle$ and $\psi(x) = \begin{cases}
					\zeta_{p^n} & \text{if } x=a, \\
					\zeta_{p^{n+m-2s}} & \text{if } x=b^{p^s}.
				\end{cases}$
				\item {\bf Case ($m < 2s$).} In this case, $H=H_\mu=\langle a^{p^{2s-m}}, \, a^\mu b^{p^{m-s}} \rangle$ and $\psi \in \lin(H_\mu)$ is defined as follows:
				$$\psi(x) = \begin{cases}
					\zeta_{p^{n+m-2s}} & \text{if } x=a^{p^{2s-m}}, \\
					1 & \text{if } x=a^\mu b^{p^{m-s}},
				\end{cases}$$
				where $\mu$ is an integer such that $\mu k \equiv -1 \pmod{p^{n-s}}$, $-\mu l \equiv 1 \pmod{p^{n+m-2s}}$, $k=p^{-s}\frac{(1+p^{n-s})^{p^m}-1}{(1+p^{n-s})^{p^{m-s}}-1}$ and $l=p^{m-2s}\frac{(1+p^{n-s})^{p^s}-1}{(1+p^{n-s})^{p^{m-s}}-1}$.
			\end{enumerate}
			\item Let $G=G_2$ and $\chi \in \FIrr(G)$ (as defined in\eqref{eq::charactervalue G_2}). Then $H= \langle a^{p^s}, b \rangle$ and $\psi(x) = \begin{cases}
				\zeta_{p^{n-s}} & \text{if } x=a^{p^s}, \\
				1 & \text{if } x=b.
			\end{cases}$
		\end{enumerate}
	\end{theorem}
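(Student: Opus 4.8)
The plan is to read the theorem as a verification problem: in every listed case the subgroup $H$ and the linear character $\psi$ are written down explicitly, so it suffices to check (i) that $H$ is abelian of index $p^s$ in $G$ and that $\psi$ is a well-defined element of $\lin(H)$, (ii) that $\psi^G=\chi$, and (iii) that $\mathbb{Q}(\psi)=\mathbb{Q}(\chi)$. The crucial simplification is that $\chi\in\FIrr(G)$ has degree $p^s$ while $\psi$ is linear, so as soon as $[G:H]=p^s$ we get $\deg\psi^G=p^s=\chi(1)$ for free; this lets me replace the identity $\psi^G=\chi$ by a single inner-product computation, with no separate irreducibility (Mackey) argument needed. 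Concretely, if $\langle\chi,\psi^G\rangle_G\ge 1$, then $\chi$ is a constituent of $\psi^G$, and since $\deg\psi^G=\chi(1)$ and $\chi$ is irreducible, this forces $\psi^G=\chi$.

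First I would dispatch the routine verifications. Abelianness of $H$ follows from Corollary \ref{cor:order}: for $G_1$ with $m\ge 2s$ one has $(1+p^{n-s})^{p^s}\equiv 1\pmod{p^n}$, so $b^{p^s}$ centralises $a$; for $G_2$ one has $b a^{p^s} b^{-1}=a^{p^s(1+p^{n-s})}=a^{p^s}$; and in the $m<2s$ case the analogous congruence $(1+p^{n-s})^{p^{m-s}}\equiv 1\pmod{p^{n-2s+m}}$ from Lemma \ref{lemma:order} makes $a^{p^{2s-m}}$ and $a^{\mu}b^{p^{m-s}}$ commute. The index $[G:H]=p^s$ comes from a direct order count using $|G_1|=p^{n+m}$ and $|G_2|=p^{n+s}$. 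Well-definedness of $\psi$ reduces to respecting the single nontrivial relation among the generators of the abelian group $H$; for $G_1$ with $m\ge 2s$ this is exactly the compatibility $\zeta_{p^{n+m-2s}}^{p^{m-s}}=\zeta_{p^n}^{p^s}=\zeta_{p^{n-s}}$ recorded with \eqref{eq:charactervalue G_1}, and for $G_2$ it is the compatibility of orders in \eqref{eq::charactervalue G_2}.

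For the heart of the argument I would compute, via Frobenius reciprocity, $\langle\chi,\psi^G\rangle_G=\langle\chi|_H,\psi\rangle_H$ directly from the explicit character formulas. The key observation is that $\chi|_H$ is supported exactly on a subgroup of $H$ of index $p^s$, and that $\chi\,\overline{\psi}$ is the constant $p^s$ there. For $G_1$ with $m\ge 2s$ the support is $\langle b^{p^s}\rangle$ (the elements $a^i b^{p^s l}$ with $p^s\mid i$, using $a^{p^s}=(b^{p^s})^{p^{m-s}}$), of order $p^{m+n-2s}=|H|/p^s$, and one checks $\chi\,\overline{\psi}=p^s$ there because $\psi(a^i b^{p^s l})=\zeta_{p^n}^{i}\zeta_{p^{n+m-2s}}^{l}$ cancels the phase in $\chi$; for $G_2$ the support is $\langle a^{p^s}\rangle$, of order $p^{n-s}=|H|/p^s$, with the same cancellation. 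Hence
\[
\langle\chi|_H,\psi\rangle_H=\frac{1}{|H|}\,p^s\cdot\frac{|H|}{p^s}=1,
\]
so $\psi^G=\chi$ by the degree argument above. Finally $\mathbb{Q}(\psi)=\mathbb{Q}(\chi)$ is read off by comparing cyclotomic fields: for $G_1$ with $m\ge 2s$ both equal $\mathbb{Q}(\zeta_{p^{n+m-2s}})$ (note $\zeta_{p^n}\in\mathbb{Q}(\zeta_{p^{n+m-2s}})$ since $m\ge 2s$), and for $G_2$ both equal $\mathbb{Q}(\zeta_{p^{n-s}})$.

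The main obstacle is the remaining case $G=G_1$ with $m<2s$, where $H_\mu=\langle a^{p^{2s-m}},\,a^\mu b^{p^{m-s}}\rangle$ and the twist $\mu$ is pinned down by $\mu k\equiv -1\pmod{p^{n-s}}$ and $-\mu l\equiv 1\pmod{p^{n+m-2s}}$ with $k,l$ the stated $p$-adic quotients. Here the naive support computation no longer lands on the specific faithful character \eqref{eq:charactervalue G_1}, and the whole point of $\mu$ is to rotate the induced character onto the correct Galois conjugate while simultaneously forcing $\mathbb{Q}(\psi)=\mathbb{Q}(\chi)$. I expect the bulk of the work to be: (a) using Lemma \ref{lemma:order} to verify that $k$ and $l$ are $p$-adic integers, so the congruences defining $\mu$ are solvable; (b) expanding $(a^\mu b^{p^{m-s}})^j$ through the commutation relation to evaluate $\psi$ on all of $H_\mu$; and (c) showing that with this $\mu$ the product $\chi\,\overline{\psi}$ is again the constant $p^s$ on a support subgroup of index $p^s$ in $H_\mu$, so that $\langle\chi|_{H_\mu},\psi\rangle_{H_\mu}=1$ and the value fields coincide. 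The other cases then follow the clean template of the previous paragraph.
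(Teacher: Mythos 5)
Your overall strategy is sound and differs from the paper's in one genuinely useful way: where the paper proves $\psi^G=\chi$ by evaluating the induced character $\psi^G(g)=\sum_i\psi^{\circ}(b^igb^{-i})$ on all of $G$ (invoking Lemma \ref{lemm:sumprimitveroots} and Lemma \ref{lemm:sumprimitveroots2} to kill the off-support values), you replace this by the Frobenius-reciprocity shortcut $\langle\chi|_H,\psi\rangle_H\ge 1$ together with the degree count $\deg\psi^G=[G:H]\cdot 1=p^s=\chi(1)$. That is a legitimate economy: it reduces an identity of class functions to a single inner product, and your support computation for $G_1$ with $m\ge 2s$ and for $G_2$ is correct (the support of $\chi|_H$ is $\langle b^{p^s}\rangle$, resp.\ $\langle a^{p^s}\rangle$, of index $p^s$ in $H$, and $\chi\overline{\psi}\equiv p^s$ there). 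The routine checks (abelianness via Corollary \ref{cor:order}, the index, well-definedness of $\psi$, equality of character fields) agree with what the paper does.

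The caveat is that you have only outlined, not executed, the case carrying essentially all the content of the theorem, namely $G=G_1$ with $m<2s$. The paper's verifications there are short but not optional: one must show $(a^\mu b^{p^{m-s}})^{p^s}=1$, which is exactly where $\mu k\equiv-1\pmod{p^{n-s}}$ enters via $(a^\mu b^{p^{m-s}})^{p^s}=a^{\mu p^sk}a^{p^s}=a^{p^s(\mu k+1)}$; that $\langle a^{p^{2s-m}}\rangle\cap\langle a^\mu b^{p^{m-s}}\rangle=1$, so $H_\mu$ is an internal direct product and $\psi$ is well defined; and that $b^{p^s}=(a^{p^{2s-m}})^{-\mu l}(a^\mu b^{p^{m-s}})^{p^{2s-m}}$, which is where the second congruence $-\mu l\equiv 1\pmod{p^{n+m-2s}}$ is used to get $\psi^G(b^{p^s})=p^s\zeta_{p^{n+m-2s}}^{-\mu l}=p^s\zeta_{p^{n+m-2s}}=\chi(b^{p^s})$, i.e.\ to land on the prescribed character rather than one of its Galois conjugates (all of which are also in $\FIrr(G)$ and would equally satisfy your degree criterion). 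Your items (a)--(c) name the right targets, so this is a matter of completion rather than a wrong idea, but as written the central case is a plan rather than a proof.
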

	\begin{proof}
		\begin{enumerate}
			\item 
			\begin{enumerate}
				\item {\bf Case ($m \geq 2s$).} In this case, we have $\mathbb{Q}(\psi)=\mathbb{Q}(\zeta_{p^{n+m-2s}})=\mathbb{Q}(\chi)$. Furthermore, $[G : H]=p^s$ and $G =\bigcup_{i=0}^{p^s-1}H b^i$. Hence, for $g \in G$, $\psi^G(g)=\sum_{i=0}^{p^s-1}{{\psi}^{\circ}}(b^igb^{-i})$, where ${\psi}^{\circ}$ is defined by  $\psi^\circ(x)=\psi(x)$ if $x \in H$ and  $\psi^{\circ}(x)=0$ if $x\notin H$. By using Lemma \ref{lemm:sumprimitveroots} and Lemma \ref{lemm:sumprimitveroots2}, we get $\psi^G=\chi$. This completes the proof of Theorem \ref{thm:reqpairmeta}(1)(a).
				
				\item {\bf Case ($m < 2s$).} In this case, we have
				\begin{align*}
					(a^\mu b^{p^{m-s}})^{p^s}&=a^{\mu\left(1+(1+p^{n-s})^{p^{m-s}} + (1+p^{n-s})^{2p^{m-s}} + \dots + (1+p^{n-s})^{(p^s-1)p^{m-s}}\right)}b^{p^m}\\
					&=a^{\mu\frac{(1+p^{n-s})^{p^m}-1}{(1+p^{n-s})^{p^{m-s}}-1}}a^{p^s}\\
					&=a^{\mu p^sk}a^{p^s}\\
					&=a^{p^s(\mu k +1)}\\
					&=1.
				\end{align*}
				Thus, observe that the order of $a^\mu b^{p^{m-s}}$ is $p^s$. Further, if $g \in \langle a^{p^{2s-m}} \rangle \cap \langle a^\mu b^{p^{m-s}} \rangle$, there exist $\alpha, \beta \in \mathbb{Z}$ such that $g = (a^{p^{2s-m}})^\alpha = (a^\mu b^{p^{m-s}})^\beta$. This implies that $b^{p^{m-s}\beta} \in \langle a \rangle$ and hence $p^s \mid \beta$. Therefore, $H_\mu=\langle a^{p^{2s-m}}, \, a^\mu b^{p^{m-s}} \rangle = \langle a^{p^{2s-m}} \rangle \times \langle a^\mu b^{p^{m-s}} \rangle$. Hence, $H_\mu$ is an abelian subgroup of $G$ and $\psi \in \lin(H_\mu)$ is well defined.\\
				Now, we have $(a^{p^{2s-m}})^{-\mu l} (a^\mu b^{p^{m-s}})^{p^{2s-m}}=b^{p^s}$. Thus, $b^{p^s}\in H_\mu$. Moreover, $b^i \notin H_\mu$ for $1 \leq i < p^s$. Observe that $[G : H_\mu]=p^s$. Therefore, $G =\bigcup_{i=0}^{p^s-1}H_\mu b^i$. Hence, for $g \in G$, $\psi^G(g)=\sum_{i=0}^{p^s-1}{{\psi}^{\circ}}(b^igb^{-i})$, where ${\psi}^{\circ}$ is defined by  $\psi^\circ(x)=\psi(x)$ if $x \in H_\mu$ and  $\psi^{\circ}(x)=0$ if $x\notin H_\mu$. Additionally, we have $\psi^G(a^{p^s})=p^s\zeta_{p^{n+m-2s}}^{p^{m-s}}=p^s\zeta_{p^{n-s}}=\chi(a^{p^s})$ and 
				$b^{p^s}=(a^{p^{2s-m}})^{-\mu l} (a^\mu b^{p^{m-s}})^{p^{2s-m}}$ which implies that $\psi^G(b^{p^s})=p^s\zeta_{p^{n+m-2s}}^{-\mu l}=p^s\zeta_{p^{n+m-2s}}=\chi(b^{p^s})$. More precisely, we can check that $\psi^G=\chi$. Furthermore, in this case, we have $\mathbb{Q}(\psi)=\mathbb{Q}(\zeta_{p^{n+m-2s}})=\mathbb{Q}(\chi)$. This completes the proof of Theorem \ref{thm:reqpairmeta}(1)(b).
			\end{enumerate}
			
			\item Here, we have $\mathbb{Q}(\psi)=\mathbb{Q}(\zeta_{p^{n-s}})=\mathbb{Q}(\chi)$. Furthermore, $[G : H]=p^s$ and $G =\bigcup_{i=0}^{p^s-1}H a^i$. Hence, for $g \in G$, $\psi^G(g)=\sum_{i=0}^{p^s-1}{{\psi}^{\circ}}(a^iga^{-i})$, where ${\psi}^{\circ}$ is defined by  $\psi^\circ(x)=\psi(x)$ if $x \in H$ and  $\psi^{\circ}(x)=0$ if $x\notin H$. It is routine to check that $\psi^G=\chi$. This completes the proof of Theorem \ref{thm:reqpairmeta}(2). \qedhere
		\end{enumerate} 
	\end{proof}
	
	Finally, consider a faithful ordinary metacyclic $p$-group $G$ as outlined in Lemma \ref{lemma:faithfulmeta}, and let $\chi \in \FIrr(G)$. According to Theorem \ref{thm:reqpairmeta}, there exists a subgroup $H$ of $G$ and $\psi \in \lin(H)$ such that $\psi^G = \chi$ and $\mathbb{Q}(\psi) = \mathbb{Q}(\chi)$. Consequently, Lemma \ref{lemma:Yamada} ensures that $\Omega(\chi) = \Omega(\psi)^G$. Additionally, by applying Lemma \ref{lemma:YamadaLinear}, we can determine an irreducible matrix representation $\Psi$ of $H$ over $\mathbb{Q}$ that affords the character $\Omega(\psi)$. The representation $\Psi^G$ then becomes an irreducible $\mathbb{Q}$-representation of $G$ that affords the character $\Omega(\psi)^G = \Omega(\psi^G) = \Omega(\chi)$. This leads us to Remark \ref{remark:requiredpair}.
	
	\begin{remark}\label{remark:requiredpair}
		\textnormal{To find an irreducible rational matrix representation of a faithful ordinary metacyclic $p$-group $G$ (for any prime $p$) that affords the character $\Omega(\chi)$, where $\chi \in \FIrr(G)$, it suffices to identify a pair $(H, \psi)$, where $H$ is a subgroup of $G$ and $\psi \in \lin(H)$, such that $\psi^G = \chi$ and $\mathbb{Q}(\psi) = \mathbb{Q}(\chi)$. This pair $(H, \psi)$ is referred to as a required pair for finding an irreducible rational matrix representation of $G$ that affords the character $\Omega(\chi)$. Reader can see \cite[Section 3]{Ram} for more details on required pairs.} 
	\end{remark}
	
	We conclude this section with an example. In Example \ref{example:rationalrepmeta}, we illustrate the process of finding an irreducible rational matrix representation of a faithful ordinary metacyclic $p$-group, using a required pair.
	
	\begin{example}\label{example:rationalrepmeta}
		\textnormal{Consider the group $G = \langle a^{32} = 1, \, b^{16} = a^8, \, bab^{-1} = a^5 \rangle$. The group $G$ is a faithful, ordinary metacyclic $2$-group with order $512$. Moreover, $G$ is of type $G_1$ as described in Lemma \ref{lemma:faithfulmeta} with $n = 5$, $m = 4$ and $s = 3$. Let $\chi \in \FIrr(G)$. In this case, $\chi(1) = 8$ and $\chi$ is defined as follows:
			\begin{equation*}
				\chi(a^i b^j) = \begin{cases}
					8\zeta_4^{i_1}\zeta_8^{j_1} & \text{if } i = 4i_1 \text{ and } j = 4j_1, \\
					0 & \text{otherwise},
				\end{cases}
			\end{equation*}
			where $\zeta_8^2 = \zeta_4$, $\zeta_4$ is a primitive $4$-th root of unity and $\zeta_8$ is a primitive $8$-th root of unity (refer to \eqref{eq:charactervalue G_1}). Since $m < 2s$, Theorem \ref{thm:reqpairmeta}(1)(b) implies that $(H_\mu, \psi)$ is the required pair for computing an irreducible rational matrix representation of $G$ corresponding to the character $\Omega(\chi)$.\\
			Next, we compute $\mu$. From $\mu k \equiv -1 \pmod{4}$, where $k = \frac{1}{8}\left(\frac{5^{16}-1}{5^2-1}\right) = 794728597$, and $-\mu l \equiv 1 \pmod{8}$, where $l = \frac{1}{4}\left(\frac{5^8-1}{5^2-1}\right) = 4069$, we obtain $\mu = 3$. Thus, we set $\mu = 3$, and hence $H_\mu = H_3 = \langle a^4, a^{3}b^2 \rangle$ with $\psi \in \lin(H_\mu)$ such that $\psi(a^4) = \zeta_8$ and $\psi(a^{3}b^2) = 1$. Furthermore, the kernel of $\psi$ is $\ker(\psi) = \langle a^{3}b^2 \rangle = N$ (say). Since $[H_\mu : N] = 8$ and $H_\mu = \bigcup_{i=0}^{7} Na^{4i}$, by Lemma \ref{lemma:YamadaLinear}, the map $\Psi : H_\mu \rightarrow GL_4(\mathbb{Q})$ provides an irreducible rational matrix representation of degree $4$ for $H_\mu$ that affords the character $\Omega(\psi)$. The explicit form of $\Psi$ is given by
			\begin{equation*}
				\Psi(a^4)= \left(\begin{array}{cccc}
					0 & 1 & 0 & 0\\
					0 & 0 & 1 & 0\\
					0 & 0 & 0 & 1\\
					-1 & 0 & 0 & 0
				\end{array}\right), \, \, \textnormal{and} \, \,
				\Psi(a^{3}b^2)= \left(\begin{array}{cccc}
					1 & 0 & 0 & 0\\
					0 & 1 & 0 & 0\\
					0 & 0 & 1 & 0\\
					0 & 0 & 0 & 1
				\end{array}\right) = I_4.
			\end{equation*}
			Define $\Psi(a^4) = P$ and let $O$ represent the zero matrix of order $4$. Since $[G : H_\mu] = 8$, we have $G = \bigcup_{i=0}^{7} H_\mu b^i$. Consequently, the induced representation $\Psi^G$ is an irreducible rational matrix representation of degree $32$ of $G$ affording the character $\Omega(\chi)$. The explicit form of $\Psi^G : G \rightarrow GL_{32}(\mathbb{Q})$ is given by
			\begin{equation*}
				\Psi^G(a)= \left(\begin{array}{cccccccc}
					O & O & P^7 & O & O & O & O & O \\
					O & O & O & I_4 & O & O & O & O \\
					O & O & O & O & P^5 & O & O & O \\
					O & O & O & O & O & P^6 & O & O \\
					O & O & O & O & O & O & P^3 & O \\
					O & O & O & O & O & O & O & P^4 \\
					P^2 & O & O & O & O & O & O & O \\
					O & P^3 & O & O & O & O & O & O 
				\end{array}\right), \, \, \textnormal{and} \, \,
				\Psi^G(b)= \left(\begin{array}{cccccccc}
					O & I_4 & O & O & O & O & O & O \\
					O & O & I_4 & O & O & O & O & O \\
					O & O & O & I_4 & O & O & O & O \\
					O & O & O & O & I_4 & O & O & O \\
					O & O & O & O & O & I_4 & O & O \\
					O & O & O & O & O & O & I_4 & O \\
					O & O & O & O & O & O & O & I_4 \\
					P & O & O & O & O & O & O & O 
				\end{array}\right).
		\end{equation*}}
	\end{example}

		\section{Rational group algebra of ordinary metacyclic $p$-groups}\label{sec:mainresult}
  Here, we present the proof of Theorem \ref{thm:wedderburnmetacyclic}.
	\begin{proof}[Proof of Theorem \ref{thm:wedderburnmetacyclic}.]
		Let $p$ be a prime and let $\zeta_d$ be a primitive $d$-th root of unity for some positive integer $d$. Consider an ordinary, finite, non-cyclic metacyclic $p$-group $G$ with a unique reduced presentation:
		\begin{equation*}
			G = \langle a, b ~ : ~ a^{p^n} = 1, \, b^{p^m} = a^{p^{n-r}}, \, bab^{-1} = a^{1+p^{n-s}} \rangle,
		\end{equation*}
		for certain integers $m, n, r$, and $s$ (as described in \eqref{prest:metacyclic}). Let $\chi \in \Irr(G)$, and assume that $\rho$ is an irreducible $\mathbb{Q}$-representation of $G$ affording the character $\Omega(\chi)$. Denote by $A_\mathbb{Q}(\chi)$ the simple component of the Wedderburn decomposition of $\mathbb{Q}G$ corresponding to $\rho$, which is isomorphic to $M_q(D)$ for some $q \in \mathbb{N}$ and division ring $D$. By Lemma \ref{lemma:schurindex}, we have $m_\mathbb{Q}(\chi) = 1$. Furthermore, since $[D : Z(D)] = m_\mathbb{Q}(\chi)^2$ and $Z(D) = \mathbb{Q}(\chi)$ (see \cite[Theorem 3]{IR}), it follows that $D = Z(D) = \mathbb{Q}(\chi)$. Next, consider $\rho = \bigoplus_{i=1}^l \rho_i$, where $l = [\mathbb{Q}(\chi) : \mathbb{Q}]$ and each $\rho_i$ is an irreducible complex representation of $G$ affording the character $\chi^{\sigma_i}$ for some $\sigma_i \in \operatorname{Gal}(\mathbb{Q}(\chi)/\mathbb{Q})$. Since $m_\mathbb{Q}(\chi) = 1$, we have $q = \chi(1)$ (see \cite[Theorem 3.3.1]{JR}).
		
		\noindent Now, let $\chi \in \lin(G)$. Suppose $\rho$ is the irreducible $\mathbb{Q}$-representation of $G$ that affords the character $\Omega(\chi)$. Let $\bar{\chi} \in \operatorname{Irr}(G/G')$ be such that $\bar{\chi}(gG') = \chi(g)$. Then $A_\mathbb{Q}(\chi) \cong \mathbb{Q}(\bar{\chi})$. Moreover, $|\lin(G)| = |\Irr(G/G')|$, and $G/G' = \langle a^{p^s}, b \rangle \cong C_{p^{n-s}} \times C_{p^m}$. Therefore, from Remark \ref{remark:linear_metacyclic}, we have the following two cases.
		
		\noindent {\bf Case A ($n-s \geq m$).} In this case, the simple components of the Wedderburn decomposition of $\mathbb{Q}G$ corresponding to all irreducible $\mathbb{Q}$-representations of $G$ whose kernels contain $G'$ are
			$$\mathbb{Q} \bigoplus_{\lambda=1}^m (p^\lambda+p^{\lambda-1})\mathbb{Q}(\zeta_{p^\lambda}) \bigoplus_{\lambda=m+1}^{n-s}p^m \mathbb{Q}(\zeta_{p^\lambda})$$
		in $\mathbb{Q}G$.
		
		\noindent {\bf Case B ($n-s < m$).} In this case, the simple components of the Wedderburn decomposition of $\mathbb{Q}G$ corresponding to all irreducible $\mathbb{Q}$-representations of $G$ whose kernels contain $G'$ are
		$$\mathbb{Q} \bigoplus_{\lambda=1}^{n-s} (p^\lambda+p^{\lambda-1})\mathbb{Q}(\zeta_{p^\lambda}) \bigoplus_{\lambda=n-s+1}^{m}p^{n-s} \mathbb{Q}(\zeta_{p^\lambda})$$
		in $\mathbb{Q}G$.\\
		
		\noindent Next, let $\chi \in \nl(G)$ and suppose $\rho$ is the irreducible $\mathbb{Q}$-representation of $G$ affording the character $\Omega(\chi)$. Here, $A_\mathbb{Q}(\chi) \cong M_q(D)$ and $D = \mathbb{Q}(\chi)$. Since $\operatorname{cd}(G) = \{p^t : 0 \leq t \leq s\}$, $q = \chi(1) = p^t$ for some $1 \leq t \leq s$. The rest of the proof proceeds through the following cases:
		\begin{enumerate}
			\item {\bf Case ($n-s \geq m$).} In this case, the group $G$ is always split, which implies that $r=0$ (refer to \eqref{prest:metacyclic} for details). Additionally, for all $t$ with $1 \leq t \leq s$, we have $n-s > m-t$. Consequently, according to Sub-case 1 of Case 1 in Remark \ref{remark:galoisclasses_metacyclic}, there are $p^{m-t}$ distinct Galois conjugacy classes of size $\phi(p^{n-s})$ for a fixed $t$ (where $1 \leq t \leq s$). Therefore, for each fixed $t$ (where $1 \leq t \leq s$), we have $\mathbb{Q}(\chi) = \mathbb{Q}(\zeta_{p^{n-s}})$, and there are $p^{m-t}$ distinct irreducible $\mathbb{Q}$-representations of $G$ that afford the character $\Omega(\chi)$, with $\chi \in \Irr(G)$ and $\chi(1) = p^t$. Thus, for $n-s \geq m$, the simple components of the Wedderburn decomposition of $\mathbb{Q}G$ corresponding to all irreducible $\mathbb{Q}$-representations of $G$ whose kernels do not contain $G'$ are given by
		        $$\bigoplus_{t=1}^{s} p^{m-t}M_{p^t}(\mathbb{Q}(\zeta_{p^{n-s}})).$$
			This expression, combined with the results from Case A, completes the proof of Theorem \ref{thm:wedderburnmetacyclic}(1).
			
			\item \textbf{Case ($n-s < m$).} Assume $m = (n-s) + k$. We consider the following two sub-cases:
			
			\begin{enumerate}
				\item \textbf{Sub-case ($k \leq s-r$).} Here, $k \leq s - r$, which implies $m - n + s \leq s - r$, leading to $r \leq n - m$. In this sub-case, $G$ is always split, meaning $r = 0$ (see \eqref{prest:metacyclic} for details). Furthermore, in this scenario, $n-s \geq m-t$ for $k \leq t \leq s$. Therefore, from Sub-case 1 of Case 1 in Remark \ref{remark:galoisclasses_metacyclic}, there are $p^{m-t}$ distinct Galois conjugacy classes of size $\phi(p^{n-s})$ for a fixed $t$ (where $k \leq t \leq s$). Consequently, for each fixed $t$ (where $t \in \{k, k+1, \dots, s\}$), $\mathbb{Q}(\chi) = \mathbb{Q}(\zeta_{p^{n-s}})$, and there are $p^{m-t}$ distinct irreducible $\mathbb{Q}$-representations of $G$ that afford the character $\Omega(\chi)$, with $\chi \in \Irr(G)$ and $\chi(1) = p^t$. Thus, the simple components of the Wedderburn decomposition of $\mathbb{Q}G$ corresponding to all irreducible $\mathbb{Q}$-representations of $G$ that afford the character $\Omega(\chi)$ with $\chi(1) = p^t$ (for $k \leq t \leq s$) are given by
				$$\bigoplus_{t=k}^{s} p^{m-t}M_{p^t}(\mathbb{Q}(\zeta_{p^{n-s}})).$$
				
				Additionally, in this sub-case, $n-s < m-t$ for $1 \leq t \leq k-1$. Thus, from Sub-case 2 of Case 1 in Remark \ref{remark:galoisclasses_metacyclic}, there are $p^{n-s}$ distinct Galois conjugacy classes of size $\phi(p^{n-s})$ and $\phi(p^{n-s})$ distinct Galois conjugacy classes of size $\phi(p^\lambda)$ for $n-s < \lambda \leq m-t$ for a fixed $t$ (where $1 \leq t \leq k-1$). Consequently, for each fixed $t$ (where $t \in \{1, 2, \dots, k-1\}$), there are $p^{n-s}$ distinct irreducible $\mathbb{Q}$-representations of $G$ that afford the character $\Omega(\chi)$ with $\chi \in \Irr(G)$, $\chi(1) = p^t$ and $\mathbb{Q}(\chi) = \mathbb{Q}(\zeta_{p^{n-s}})$, as well as $\phi(p^{n-s})$ distinct irreducible $\mathbb{Q}$-representations of $G$ with $\chi \in \Irr(G)$, $\chi(1) = p^t$ and $\mathbb{Q}(\chi) = \mathbb{Q}(\zeta_{p^\lambda})$ for $n-s < \lambda \leq m-t$. Therefore, the simple components of the Wedderburn decomposition of $\mathbb{Q}G$ corresponding to all irreducible $\mathbb{Q}$-representations of $G$ with $\chi(1) = p^t$ (for $1 \leq t \leq k-1$) are given by
				$$\bigoplus_{t=1}^{k-1} p^{n-s}M_{p^t}(\mathbb{Q}(\zeta_{p^{n-s}})) \bigoplus_{t=1}^{k-1}\bigoplus_{\lambda=n-s+1}^{m-t} (p^{n-s}-p^{n-s-1})M_{p^t}(\mathbb{Q}(\zeta_{p^{\lambda}})).$$
				
				Including the above expressions from this sub-case along with Case B completes the proof of Theorem \ref{thm:wedderburnmetacyclic}$(2)(a)$.
				
				\item \textbf{Sub-case ($k > s-r$).} In this sub-case, $n-s < m-t$ for $1 \leq t \leq s-r$. Thus, from Sub-case 2 of Case 1 in Remark \ref{remark:galoisclasses_metacyclic}, there are $p^{n-s}$ distinct Galois conjugacy classes of size $\phi(p^{n-s})$ and $\phi(p^{n-s})$ distinct Galois conjugacy classes of size $\phi(p^\lambda)$ for $n-s < \lambda \leq m-t$ for a fixed $t$ (where $1 \leq t \leq s-r$). Therefore, for each fixed $t$ (where $t \in \{1, 2, \dots, s-r\}$), there are $p^{n-s}$ distinct irreducible $\mathbb{Q}$-representations of $G$ that afford the character $\Omega(\chi)$ with $\chi \in \Irr(G)$, $\chi(1) = p^t$ and $\mathbb{Q}(\chi) = \mathbb{Q}(\zeta_{p^{n-s}})$, and $\phi(p^{n-s})$ distinct irreducible $\mathbb{Q}$-representations of $G$ with $\chi \in \Irr(G)$, $\chi(1) = p^t$ and $\mathbb{Q}(\chi) = \mathbb{Q}(\zeta_{p^\lambda})$ for $n-s < \lambda \leq m-t$. Hence, the simple components of the Wedderburn decomposition of $\mathbb{Q}G$ corresponding to all irreducible $\mathbb{Q}$-representations of $G$ with $\chi(1) = p^t$ (for $1 \leq t \leq s-r$) are given by
				$$\bigoplus_{t=1}^{s-r} p^{n-s}M_{p^t}(\mathbb{Q}(\zeta_{p^{n-s}})) \bigoplus_{t=1}^{s-r}\bigoplus_{\lambda=n-s+1}^{m-t} (p^{n-s}-p^{n-s-1})M_{p^t}(\mathbb{Q}(\zeta_{p^{\lambda}})).$$
				
				Additionally, according to Case 2 of Remark \ref{remark:galoisclasses_metacyclic}, there are $p^{n - r - t}$ distinct Galois conjugacy classes of irreducible complex characters of degree $p^t$, each with size $\phi(p^{m + r - s})$ for a fixed $t$ where $s - r + 1 \leq t \leq s$. Thus, for each fixed $t$ (where $t \in \{s - r + 1, \dots, s\}$), $\mathbb{Q}(\chi) = \mathbb{Q}(\zeta_{p^{m + r - s}})$, and there are distinct irreducible $\mathbb{Q}$-representations of $G$ that afford the character $\Omega(\chi)$ with $\chi \in \Irr(G)$ and $\chi(1) = p^t$. Therefore, the simple components of the Wedderburn decomposition of $\mathbb{Q}G$ corresponding to all irreducible $\mathbb{Q}$-representations of $G$ with $\chi(1) = p^t$ (for $s-r+1 \leq t \leq s$) are given by
				$$\bigoplus_{t=s-r+1}^s p^{n-r-t}M_{p^t}(\mathbb{Q}(\zeta_{p^{m+r-s}})).$$
				
				Including the above expressions from this sub-case along with Case B completes the proof of Theorem \ref{thm:wedderburnmetacyclic}$(2)(b)$.
			\end{enumerate}
			
		\end{enumerate}
		This completes the proof of Theorem \ref{thm:wedderburnmetacyclic}.
	\end{proof}
	
	\begin{corollary}
		Let $p$ be a prime and let $\zeta_d$ be a primitive $d$-th root of unity for some positive integer $d$. Consider a non-split ordinary, finite, non-cyclic metacyclic $p$-group $G$, with a unique reduced presentation:
		\begin{equation*}
			G = \langle a, b ~ : ~ a^{p^n} = 1, \, b^{p^m} = a^{p^{n-r}}, \, bab^{-1} = a^{1+p^{n-s}} \rangle,
		\end{equation*}
		for certain integers $m, n, r$ and $s$ (as defined in \eqref{prest:metacyclic}). Then
		\begin{align*}
			\mathbb{Q}G \cong & \mathbb{Q} \bigoplus_{\lambda=1}^{n-s} (p^\lambda+p^{\lambda-1})\mathbb{Q}(\zeta_{p^\lambda}) \bigoplus_{\lambda=n-s+1}^{m}p^{n-s} \mathbb{Q}(\zeta_{p^\lambda}) \bigoplus_{t=1}^{s-r} p^{n-s}M_{p^t}(\mathbb{Q}(\zeta_{p^{n-s}}))\\ &\bigoplus_{t=1}^{s-r}\bigoplus_{\lambda=n-s+1}^{m-t} (p^{n-s}-p^{n-s-1})M_{p^t}(\mathbb{Q}(\zeta_{p^{\lambda}})) \bigoplus_{t=s-r+1}^s p^{n-r-t}M_{p^t}(\mathbb{Q}(\zeta_{p^{m+r-s}})).
		\end{align*}
	\end{corollary}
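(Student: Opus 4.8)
The plan is to derive this corollary directly from Theorem \ref{thm:wedderburnmetacyclic}, by showing that the defining inequalities of the non-split case force $G$ into Case (2), Sub-case ($k > s-r$) of that theorem. First I would recall that, by \eqref{prest:metacyclic}, a non-split ordinary metacyclic $p$-group satisfies
\[
\max\{1, n-m+1\} \leq r < \min\{s, n-s+1\}.
\]
From this I extract the two inequalities $r \geq n-m+1$ and $r < s$, which are the only facts about the parameters that the argument will use.

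Next I would verify that $n-s < m$, so that Case (2) of Theorem \ref{thm:wedderburnmetacyclic} applies rather than Case (1). Since $r \geq n-m+1$ and $r < s$, we have $n-m+1 \leq r < s$, whence $n-m+1 < s$ and therefore $n-s < m-1 < m$. Writing $m = (n-s)+k$ as in the statement of the theorem, this is precisely the hypothesis $k \geq 1$ that characterizes Case (2).

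Then I would check that $k > s-r$, placing $G$ in Sub-case (b). By definition $k = m-(n-s) = m-n+s$, so the inequality $k > s-r$ is equivalent to $m-n+s > s-r$, i.e.\ $r > n-m$. But $r \geq n-m+1 > n-m$ holds by the non-split condition, so indeed $k > s-r$.

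Finally, having established that $G$ lies in Case (2), Sub-case ($k > s-r$), I would simply invoke Theorem \ref{thm:wedderburnmetacyclic}(2)(b), whose displayed decomposition coincides verbatim with the one asserted in the corollary. The only point requiring care — and the sole place an error could creep in — is confirming that the non-split inequalities pin down this sub-case and no other; in particular one must exclude Sub-case ($k \leq s-r$), which is automatic since, as noted in the proof of the theorem, that sub-case forces $r = 0$ and hence the split case. No genuine computation is needed, so I do not anticipate any substantial obstacle beyond this bookkeeping of inequalities.
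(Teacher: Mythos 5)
Your proposal is correct and follows essentially the same route as the paper: both arguments extract $n-m+1 \leq r < s$ from the non-split condition to conclude $m > n-s$ and $k = m-n+s > s-r$, and then invoke Theorem \ref{thm:wedderburnmetacyclic}(2)(b). The bookkeeping of inequalities is handled identically, so there is nothing to add.
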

	\begin{proof}
		Since $G$ is non-split, we have $n-m+1 <s$. This implies that $m >n-s$ (see \eqref{prest:metacyclic}). Further, $n-m+1 \leq r$, leading to $m-n+s > s-r$ (see \eqref{prest:metacyclic}). Hence, the proof follows from Theorem \ref{thm:wedderburnmetacyclic}(2)(b).
	\end{proof}
	
	\begin{corollary}\label{cor:isogroupalgebra}
		Let $G$ and $H$ be two ordinary metacyclic $p$-groups, where $p$ is a prime. Then $\mathbb{Q}G \cong \mathbb{Q}H$ if and only if $G \cong H$.
	\end{corollary}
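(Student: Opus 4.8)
The forward direction is immediate, since isomorphic groups always have isomorphic group algebras, so the whole content lies in the converse. The plan is to show that the Wedderburn decomposition of $\mathbb{Q}G$ determines the reduced presentation of $G$. By King \cite{King} an ordinary metacyclic $p$-group is determined up to isomorphism by the integers $(n,m,r,s)$ of its uniquely reduced presentation; hence $G\cong H$ is equivalent to equality of these tuples. On the other hand, by the Artin--Wedderburn theorem the decomposition of a semisimple $\mathbb{Q}$-algebra into simple two-sided ideals is unique up to permutation and $\mathbb{Q}$-algebra isomorphism, so $\mathbb{Q}G\cong\mathbb{Q}H$ forces the two multisets of simple components supplied by Theorem \ref{thm:wedderburnmetacyclic} to coincide. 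Each component $M_{p^t}(\mathbb{Q}(\zeta_{p^\lambda}))$ is recognizable from its $\mathbb{Q}$-algebra structure: its center is $\mathbb{Q}(\zeta_{p^\lambda})$ (so $\lambda$ is read off from $[Z:\mathbb{Q}]=\phi(p^\lambda)$) and $p^t$ is its matrix size. Thus the triple (matrix size, center field, multiplicity) of every component is an invariant of $\mathbb{Q}G$, and it suffices to recover $(n,m,r,s)$ from these invariants, i.e.\ to prove that the assignment $(n,m,r,s)\mapsto\mathbb{Q}G$ of Theorem \ref{thm:wedderburnmetacyclic} is injective on the admissible range.

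I would first recover $n$, $m$ and $s$. Since $\dim_{\mathbb{Q}}\mathbb{Q}G=|G|=p^{n+m}$, the sum $n+m$ is read off from the total dimension. If $\mathbb{Q}G$ has no noncommutative component then $G$ is abelian ($s=r=0$) and $\mathbb{Q}G\cong\mathbb{Q}(C_{p^n}\times C_{p^m})$ fixes the group, so I may assume $s\geq 1$. Then $s$ is the exponent of the largest matrix size $p^s$ occurring, because $\cd(G)=\{p^t:0\leq t\leq s\}$ guarantees a block $M_{p^s}(\cdot)$ and nothing larger. Next I claim that $n-s$ equals the \emph{smallest} exponent $\lambda$ for which some noncommutative component has center $\mathbb{Q}(\zeta_{p^\lambda})$. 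Inspecting the cases of Theorem \ref{thm:wedderburnmetacyclic}, the field $\mathbb{Q}(\zeta_{p^{n-s}})$ always appears as a center (via the blocks $M_{p^t}(\mathbb{Q}(\zeta_{p^{n-s}}))$ with small $t$), while every other center $\mathbb{Q}(\zeta_{p^\lambda})$ has $\lambda\geq n-s$ (the double sums use $\lambda\geq n-s+1$, and in the non-split sub-case $m+r-s>n-s$ since there $m+r>n$). This yields $n-s$, hence $n=(n-s)+s$ and $m=(n+m)-n$.

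It then remains to pin down $r$, equivalently to detect the split/non-split dichotomy, and for this I would examine the center fields of the top blocks $M_{p^s}(\cdot)$, letting $f$ denote the smallest exponent $\lambda$ with $M_{p^s}(\mathbb{Q}(\zeta_{p^\lambda}))$ present. Running through Theorem \ref{thm:wedderburnmetacyclic}, whenever $r=0$ (Case 1, and the split instances of Case 2) the largest block has center $\mathbb{Q}(\zeta_{p^{n-s}})$, so $f=n-s$; whereas in the non-split situation $r\geq 1$ forces sub-case $k>s-r$, and the blocks of size $p^s$ can only come from the final summand $\bigoplus_{t=s-r+1}^{s}p^{n-r-t}M_{p^t}(\mathbb{Q}(\zeta_{p^{m+r-s}}))$, because the first two summands only use $t\leq s-r<s$; hence $f=m+r-s>n-s$. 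Consequently $r=0$ exactly when $f=n-s$, and otherwise $r=f-m+s$. This determines $r$ and completes the recovery of $(n,m,r,s)$, whence $G\cong H$.

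The routine invariants, namely $n+m$ from the dimension and $s$ from the top matrix size, are immediate; the delicate part is that genuinely different presentations can yield superficially similar decompositions, for instance a Case 1 group and a Case 2 group with $k=1$ both have all matrix blocks over a single cyclotomic field, so the crux is to isolate numerical features that separate them. The two facts carrying the argument are that the \emph{smallest} cyclotomic center among the matrix components always pins $n-s$ (and never $m$), and that the center of the \emph{largest} matrix blocks cleanly detects $r$. Verifying these two claims uniformly across Case 1, both sub-cases of Case 2, and their split/non-split and boundary instances (in particular $m>n$, $k=1$, and $n-s=m$) is the main bookkeeping obstacle, though each verification is a short check against the closed forms in Theorem \ref{thm:wedderburnmetacyclic}.
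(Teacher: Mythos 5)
Your proposal is correct and follows essentially the same route as the paper, which simply asserts that the corollary ``directly follows from Theorem \ref{thm:wedderburnmetacyclic}''; you supply the injectivity argument that the paper leaves implicit, and your bookkeeping checks out (in particular, the identification of $n-s$ as the smallest center exponent among noncommutative components, and the use of the non-split inequality $r\geq n-m+1$ to force $m+r-s>n-s$, are exactly the facts needed). The only point worth flagging is that your case analysis for recovering $r$ correctly accounts for the split instances of Sub-case $(k>s-r)$ (i.e.\ $r=0$ with $m>n$), where the first summand still contributes a block $M_{p^s}(\mathbb{Q}(\zeta_{p^{n-s}}))$, so the criterion $f=n-s\iff r=0$ holds uniformly.
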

	\begin{proof}
		The proof directly follows from Theorem \ref{thm:wedderburnmetacyclic}.
	\end{proof}
	
	\subsection{Examples}\label{exam:metacyclic} In \cite{Ram2}, a combinatorial formula was derived to compute the Wedderburn decomposition of the rational group algebra for a split metacyclic $p$-group, where $p$ is an odd prime. Theorem \ref{thm:wedderburnmetacyclic} extends the result of \cite[Theorem 1]{Ram2}. In this sub-section, we illustrate Theorem \ref{thm:wedderburnmetacyclic} by providing  Wedderburn decomposition of some ordinary metacyclic $p$-groups.
	
	\begin{enumerate}
		\item Consider $G_1 = \langle a, b : a^{32} = 1, b^8 = 1, bab^{-1} = a^9 \rangle$. In this case, $n=5$, $m=3$, $r=0$, and $s=2$. Thus, by applying Theorem \ref{thm:wedderburnmetacyclic}(1), we get
		$$\mathbb{Q}G_1 \cong 4\mathbb{Q} \oplus 6\mathbb{Q}(\zeta_4) \oplus 12\mathbb{Q}(\zeta_8) \oplus 4M_2(\mathbb{Q}(\zeta_8)) \oplus 2M_4(\mathbb{Q}(\zeta_8)).$$
		In the GAP library, $G_1$ is isomorphic to SmallGroup$(256, 318)$.
		
		\item Consider $G_2 = \langle a, b : a^{16} = 1, b^{16} = 1, bab^{-1} = a^5 \rangle$. Here, $n=4$, $m=4$, $r=0$, and $s=2$. By Theorem \ref{thm:wedderburnmetacyclic}(2)(a), we get
		\begin{align*}
			\mathbb{Q}G_2 \cong & 4\mathbb{Q} \oplus 6\mathbb{Q}(\zeta_4) \oplus 4\mathbb{Q}(\zeta_8) \oplus 4\mathbb{Q}(\zeta_{16}) \oplus 4M_2(\mathbb{Q}(\zeta_4)) \oplus 2M_2(\mathbb{Q}(\zeta_8)) \\
			& \oplus 4M_4(\mathbb{Q}(\zeta_4)).
		\end{align*}
		$G_2$ is isomorphic to SmallGroup$(256, 41)$ in the GAP library.
		
		\item Consider $G_3 = \langle a, b : a^8 = 1, b^{32} = 1, bab^{-1} = a^5 \rangle$. Here, $n=3$, $m=5$, $r=0$, and $s=1$. Using Theorem \ref{thm:wedderburnmetacyclic}(2)(b), we get
		\begin{align*}
			\mathbb{Q}G_3 \cong & 4\mathbb{Q} \oplus 6\mathbb{Q}(\zeta_4) \oplus 4\mathbb{Q}(\zeta_8) \oplus 4\mathbb{Q}(\zeta_{16}) \oplus 4\mathbb{Q}(\zeta_{32}) \oplus 4M_2(\mathbb{Q}(\zeta_4)) \oplus 2M_2(\mathbb{Q}(\zeta_8)) \\
			& \oplus 2M_2(\mathbb{Q}(\zeta_{16})).
		\end{align*}
		$G_3$ is isomorphic to SmallGroup$(256, 319)$ in GAP library.
		
		\item Consider $G_4 = \langle a, b : a^{16} = 1, b^{16} = a^8, bab^{-1} = a^5 \rangle$. For this group, $n=4$, $m=4$, $r=1$, and $s=2$. From Theorem \ref{thm:wedderburnmetacyclic}(2)(b), we get
		\begin{align*}
			\mathbb{Q}G_4 \cong & 4\mathbb{Q} \oplus 6\mathbb{Q}(\zeta_4) \oplus 4\mathbb{Q}(\zeta_8) \oplus 4\mathbb{Q}(\zeta_{16}) \oplus 4M_2(\mathbb{Q}(\zeta_4)) \oplus 2M_2(\mathbb{Q}(\zeta_8)) \\
			& \oplus 2M_4(\mathbb{Q}(\zeta_8)).
		\end{align*}
		In GAP library, $G_4$ is isomorphic to SmallGroup$(256, 320)$.
		
		\item Finally, consider $G_5 = \langle a, b : a^{27} = 1, b^{27} = a^9, bab^{-1} = a^4 \rangle$. Here, $n=3$, $m=3$, $r=1$, and $s=2$. Using Theorem \ref{thm:wedderburnmetacyclic}(2)(b), we get
		\begin{align*}
			\mathbb{Q}G_5 \cong & \mathbb{Q} \oplus 4\mathbb{Q}(\zeta_3) \oplus 3\mathbb{Q}(\zeta_9) \oplus 3\mathbb{Q}(\zeta_{27}) \oplus 3M_3(\mathbb{Q}(\zeta_3)) \oplus 2M_3(\mathbb{Q}(\zeta_9)) \\
			& \oplus M_9(\mathbb{Q}(\zeta_9)).
		\end{align*}
		In the GAP library, $G_5$ is isomorphic to SmallGroup$(729, 92)$.
	\end{enumerate}

	\section{Acknowledgements}
	Ram Karan acknowledges University Grants Commission, Government of India.


\begin{thebibliography}{99}
		\bibitem{BG} G. K. Bakshi and G. Kaur, Character triples and Shoda pairs, J. Algebra {\bf 491} (2017), 447–473. 
		\bibitem{BG1} G. K. Bakshi and G. Kaur, A generalization of strongly monomial groups, J. Algebra {\bf 520} (2019), 419–439.
		\bibitem{BM14} G. K. Bakshi and S. Maheshwary, The rational group algebra of a normally monomial
		group, J. Pure Appl. Algebra {\bf 218}(9) (2014), 1583–1593.
		\bibitem{BGBasmaji} B. G. Basmaji, Complex representations of metacyclic groups, The
		American Mathematical Monthly, {\bf 86}(1) (1979), 47-48.
	    \bibitem{Ram2} R. K. Choudhary and S. K. Prajapati, A Combinatorial formula for the Wedderburn decomposition of rational group algebras of split metacyclic $p$-groups, J. Algebra Appl., \href{https://doi.org/10.1142/S0219498826500684}{doi.org/10.1142/S0219498826500684}
		\bibitem{Ram} R. K. Choudhary and S. K. Prajapati, Rational representations and rational group algebra of VZ $p$-groups, J. Aust. Math. Soc., \href{https://doi:10.1017/S1446788724000132}{doi:10.1017/S1446788724000132}
		\bibitem{Rio-Rui} Á. del Río and M. Ruiz, Computing large direct products of free groups in integral group rings, Comm. Algebra {\bf 30}(4) (2002), 1751–1767.	
		\bibitem{Hempel} C. E. Hempel, Metacyclic groups, Comm. Algebra {\bf 28}(8) (2000), 3865–3897.
		\bibitem{Herman} A. Herman, On the automorphism group of rational group algebras of metacyclic groups, Comm. Algebra {\bf 25}(7) (1997), 2085–2097.
		\bibitem{HK} L. Héthelyi and B. Külshammer, Characters, conjugacy classes and centrally large subgroups of $p$-groups of small rank, J. Algebra {\bf 340} (2011), 199-210.
		\bibitem{HS} S. P. Humphries and D. C. Skabelund, Character tables of metacyclic groups, Glasgow Math. J., {\bf 57} (2015), 387-400.
		\bibitem{YI} Y. Iida and T. Yamada, Rational representations of a metacyclic $p$-group, SUT J. Math. {\bf 29}(1) (1993), 79-92.
		\bibitem{I} I. M. Isaacs, Character Theory of Finite Groups (Dover Publications Inc., New York, 1994).
		\bibitem{Jes-Rio} E. Jespers and Á. del Río, A structure theorem for the unit group of the integral group ring of some finite groups, J. Reine Angew. Math. {\bf 521} (2000), 99–117.
		\bibitem{JR} E. Jespers and Á. del Río, Group Ring Groups, Volume 1: Orders and Generic Constructions of Units (De Gruyter, Berlin, 2016).
		\bibitem{Jes-Lea} E. Jespers and G. Leal, Generators of large subgroups of the unit group of integral group rings, Manuscripta Math. {\bf 78}(3) (1993), 303–315.
		\bibitem{Jes-Lea-Paq} E. Jespers, G. Leal and A. Paques, Central idempotents in rational group algebras of finite nilpotent groups, J. Algebra Appl. {\bf 2}(1) (2003), 57–62.
		\bibitem{King} B. W. King, Presentations of metacyclic groups, Bull. Aust. Math. Soc. {\bf 8} (1973), 103-131.
		\bibitem{Liedahl} S. Liedahl, Presentations of metacyclic p-groups with applications to K-admissibility questions, J. Algebra {\bf 169}(3) (1994), 965–983.
		\bibitem{Liedahl 2} S. Liedahl, Enumeration of metacyclic p-groups, J. Algebra {\bf 186}(2) (1996), 436–446.
		\bibitem{Mann} A. Mann, Minimal characters of $p$-groups, J. Group Theory {\bf 2} (1999), 225-250.
		\bibitem{Munkholm} H. J. Munkholm, Induced monomial representations, Young elements, and metacyclic
		groups, Proc. Am. Math. Soc. {\bf 19} (1968), 453–458.
		\bibitem{ODRS04} A. Olivieri, Á. del Río, and J. J. Simón, On monomial characters and central idempotents
		of rational group algebras, Comm. Algebra {\bf 32}(4) (2004), 1531–1550.
		\bibitem{Olivieri} A. Olivieri, Á. del Río and J. J. Simón, The group of automorphisms of a rational group algebra of a finite metacyclic group, Comm. Algebra {\bf 34}(10) (2006), 3543–3567.
		\bibitem{Olt07} G. Olteanu. Computing the Wedderburn decomposition of group algebras by the Brauer-Witt theorem. Math. Comp. {\bf 76}(258) (2007), 1073–1087.
		\bibitem{PW} S. Perlis and G. L. Walker, Abelian group algebras of finite order, Trans. Amer. Math. Soc., {\bf 68}(3) (1950), 420-426.
		\bibitem{IR} I. Reiner, The Schur index in the theory of group representataions, Michigan Mat. J., {\bf 8} (1961), 39-47.
		\bibitem{Rit-Seh} J. Ritter and S. K. Sehgal, Construction of units in integral group rings of finite nilpotent groups, Trans. Amer. Math. Soc. {\bf 324}(2) (1991), 603–621.	
		\bibitem{Sim}  H. S. Sim, Metacyclic groups of odd order, Proc. London Math. Soc. {\bf  69}(3) (1994), 47–71.
		\bibitem{Yam} T. Yamada, The Schur Subgroup of the Brauer Group, Lecture Notes in Mathematics, 397 (Springer Verlag, Berlin, 1974).
		\bibitem{Y} T. Yamada, Remarks on rational representations of a finite group, SUT J. Math. {\bf 29}(1) (1993), 71-77.	
	\end{thebibliography}
\end{document}